\pgfplotsset{compat=1.15}
\definecolor{grey}{rgb}{0.8,0.8,0.8}
\newcommand{\A}{\mathscr{A}}
\newcommand{\F}{\mathcal{F}}
\newcommand{\g}{\gamma}
\newcommand{\M}{\mathbf{M}}
\renewcommand{\S}{\mathbf{S}}
\renewcommand{\t}{\tau}
\newcommand{\T}{\mathscr{T}}
\newcommand{\X}{\mathscr{X}}
\newcommand{\x}{\mathbf{x}}
\newcommand{\y}{\mathbf{y}}
\newcommand{\Z}{\mathbb{Z}}
\newcommand{\SM}{(\mathbf{S}, \mathbf{M})}
\newtheorem{theorem}{Theorem}[section]
\newtheorem*{theorem*}{Theorem}
\newtheorem{lemma}[theorem]{Lemma}
\newtheorem{proposition}[theorem]{Proposition}
\newtheorem{crl}[theorem]{Corollary}
\theoremstyle{definition}
\newtheorem{definition}[theorem]{Definition}
\newtheorem{example}[theorem]{Example}
\newtheorem*{conjecture*}{Conjecture}
\theoremstyle{remark}
\newtheorem{remark}[theorem]{Remark}
\numberwithin{equation}{section}
\newcommand{\dfn}[1]{\emph{#1}}
\newcommand{\todo}[1]{\textcolor{DarkOrchid}{\textit{#1}}}
\pgfplotsset{compat=1.15}
\begin{document}

\title{Quivers from non-orientable surfaces}

\author[V. Bazier-Matte]{V\'eronique Bazier-Matte}
\address{Department of Mathematics, University of Connecticut, 06269, United States of America}
\email{veronique.bazier-matte@uconn.edu}

\author[F. He]{Fenghuan He}
\address{Commonwealth School, Boston, MA}
\email{lhe@commschool.org}

\author[R. Huang]{Ruiyan Huang}
\address{Yale University, New Haven, CT}
\email{ruiyan.huang@yale.edu}

\author[H. Luo]{Hanyi Luo}
\address{Harvey Mudd College, Claremont, CA}
\email{hluo@hmc.edu}

\author[K. Wright]{Kayla Wright}
\address{University of Minnesota, Twin Cities, Minneapolis, MN}
\email{kaylaw@umn.edu}

\keywords{}
\thanks{}  

\definecolor{grey}{rgb}{0.75,0.75,0.75}

\begin{abstract}
We associate a quiver to a quasi-triangulation of a non-orientable marked surface and define a notion of quiver mutation that is compatible with quasi-cluster algebra mutation defined by Dupont and Palesi \cite{DP15}. Moreover, we use our quiver to show the unistructurality of the quasi-cluster algebra arising from the M\"obius strip.
\end{abstract}


\maketitle

\tableofcontents


\section{Introduction}
Cluster algebras were originally defined by Fomin and Zelevinsky in \cite{FZ02} in order to give a combinatorial characterization of total positivity and canonical bases in reductive groups, but have since been found to be helpful in many other areas of mathematics such as algebraic combinatorics, symplectic geometry, Teichm\"uller theory, and various areas of physics. 

Since the birth of cluster algebras, there have been various generalizations. In our paper, we will be exploring a generalization of cluster algebras from surfaces known as \textit{quasi-cluster algebras}. These algebras were defined via a quasi-triangulation of a non-orientable surface by Dupont and Palesi \cite{DP15}, extending the surface model for cluster algebras developed by \cite{FST08} for a triangulation of an orientable surface. There have been other generalizations besides the notion of quasi-cluster algebras that are active areas of research. 

For example, generalized cluster algebras were introduced in \cite{CS2014} by Chekhov and Shapiro, as another generalization of cluster algebras from surfaces where we allow ``orbifold points" of arbitrary integer orders. In these generalized cluster algebras, the binomial exchange relation for cluster variables of cluster algebras is replaced by polynomial exchange relation to correctly model the behavior of the surface. Lam and  Pylyavskyy also generalized the binomial exchange relation for cluster algebras in their definition of Laurent Phenomenon algebras, also known as LP algebras, where they allow exchange relations to be arbitrary irreducible polynomials, rather than binomials \cite{lp}. The relaxation of exchange relations led to investigation of what types of algebras held the key features of classical cluster algebras -- for example, Lam and Pylyavskyy wanted to get at a larger class of algebras that satisfy the Laurent Phenomenon, moreover with positive coefficients.

For quasi-cluster algebras specifically, many of the classical cluster algebraic results first posed by Fomin and Zelevinsky such as Laurent Phenomenon and Finite Type Classification were proven by Dupont and Palesi \cite{DP15}. Since this original paper by Dupont and Palesi, there have been other works on quasi-cluster algebras. For instance, connecting two generalizations of cluster algebras, Jon Wilson showed that quasi-cluster algebras arising from unpunctured non-orientable surfaces also have an associated LP algebra \cite{LP16}. Wilson continued working on quasi-cluster algebras and investigated topological questions about the associated cluster complex. Namely, he proved that the quasi-cluster complex of the M\"obius strip is shellable and homeomorphic to a PL sphere \cite{wilson2015shellability}. In 2019, Wilson also proved positivity of the coefficients in the Laurent expansions for quasi-cluster variables \cite{Wil19}. In particular, positivity was proven by extending the cluster expansion formula using snake graphs proven in the orientable case in \cite{MSW13}. In addition to these results from Wilson, the first, third and fourth author investigated combinatorial problems related to quasi-cluster algebras. In particular, they developed a formula for the number of triangulations of a M\"obius strip, depending on its number of marked points \cite{BHL20}.

In this paper, we continue the exploration of quasi-cluster algebras by defining a quiver associated to a quasi-triangulation of a non-orientable surface. This quiver allows us to perform mutations without the data of the triangulation of the surface and is compatible with the original definition of mutation given in \cite{DP15} which is our main result stated in Theorem \ref{theorem::compatibilityofmutation}.

Our paper is organized as follows: in Section \ref{section::prelims}, we review cluster algebras from orientable surfaces and quasi-cluster algebras from non-orientable surfaces. From here, our main works begins in Section \ref{section::ourquiver} where we define a quiver associated to a non-orientable surface. In this section, we define our notion of mutation that is compatible for quasi-cluster algebra mutation defined by \cite{DP15} as well as prove some desirable properties of our quiver mutation. We also justify why our quiver is a useful object to consider and make a connection to work of Jon Wilson for quasi-cluster algebras used in \cite{Wil18}, \cite{Wil19}, \cite{Wil20} and \cite{wilson2015shellability}. Lastly, as a bonus, in our final Section \ref{section::unistructurality}, we prove that the quasi-cluster algebra arising from a M\"obius strip is unistructural.

{\bf Acknowledgements:} The authors would like to thank Canada/USA Mathcamp for bringing us all together. We would also like to thank Gregg Musiker for helping conversations and insights into the writing of this paper. 
 
\section{Preliminaries} \label{section::prelims}
In this section, we review the notion of a marked surface, assuming nothing about the orientability of the surface. We then restrict our viewpoint to orientable surfaces and recall the definition of a cluster algebra from an orientable surface defined by Fomin, Shapiro and Thurston \cite{FST08}. Lastly, we will address the non-orientable case by recalling Dupont and Palesi's work in defining a quasi-cluster algebras arising from a non-orientable surfaces \cite{DP15}.

\subsection{Marked Surfaces}
\begin{definition} \label{definition::markedsurface}
Let $\S$ be a compact, connected Riemann surface with boundary $\partial \S$. Let $\M$ be a finite set of points, we call \textit{marked points}, contained in $\S$ such that each connected component of $\partial \S$ has at least one point of $\M$. We say the pair $\SM$ is a \textit{marked surface}. 
\end{definition}


\begin{remark}
For simplicity, we will assume that all marked points are contained in $\partial \S$ i.e. our marked surfaces have no interior marked points known as punctures. In addition, we wish to exclude the cases of pairs $\SM$ that do not admit a triangulation. To that end, we disallow $\SM$ to be a monogon, bigon or a triangle. 
\end{remark}

\begin{definition}\label{definition::regulararc}
A regular \textit{arc} $\t$ in a marked surface $\SM$ is a curve in $\S$, considered up to isotopy relative its endpoints \footnote{Let $X,Y$ be two topological spaces and let $f,g:X \to Y$ be two continuous maps. A homotopy between $f,g$ is another continuous map $h: X \times [0,1] \to Y$ such that $h(x,0) = f(x)$ and $h(x,1) = g(x)$ for all $x \in X$. An isotopy is a homotopy $h$ that induces a homeomorphism from $X$ onto $h(X,t)$ for all $0 \leq t \leq 1$ i.e. for any $0 \leq t \leq 1$, the map $h(\cdot,t): X \to h(X,t)$ is a homeomorphism. } such that
\begin{enumerate}
    \item the endpoints of $\t$ are $\M$,
    \item $\t$ has no self-intersections itself, except at its endpoints,
    \item except for the endpoints, $\t \cap \M = \emptyset$ and $\t \cap \partial \S = \emptyset$;
    \item and $\t$ does not cut out a monogon or bigon. (In other words, $\t$ is not contractible into $\M$ or onto the boundary of $\S$.)
\end{enumerate}
We call arcs that cut out a bigon \textit{boundary arcs} that we consider as part of the data of $\SM.$ We denote the set of boundary arcs by $\mathbf{B}$.
\end{definition}

We say that two (regular) arcs $\t, \t'$ in $\SM$ are \textit{compatible} if up to isotopy, they do not intersect one another. More formally, define $\text{cross}(\t, \t')$ to be the minimum of the number of crossings  of $\alpha$ and $\alpha$' where $\alpha$ is an arc isotopic to $\t$ and $\alpha'$ is an arc isotopic to $\t'$
%

We say regular arcs $\t, \t'$ are \textit{compatible} if cross$(\t, \t') = 0$. A maximal collection of pairwise compatible arcs in $\SM$ is called a \textit{triangulation of $\SM$}.

\begin{example}
Figure \ref{Fig::Triangulation} shows a triangulation of a marked surface with three boundary components; each of the boundary components contains one marked point.
 \begin{figure}[h]
 \centering
 \begin{tikzpicture} [scale = 0.6]
    \draw[very thick] (0,0) circle (3cm);
    \draw[very thick] (-1.25,0) circle (0.75cm);
    \draw[very thick] (1.25,0) circle (0.75cm);
    
    \tikzstyle{p}=[circle,fill, scale=0.3]
    \node[p] (1) at (-0.5, 0) {};
    \node[p] (2) at (0.5, 0) {};
    \node[p] (3) at (0, -3) {};
    
    \draw (1) -- (2);
    \draw (1) -- (3);
    \draw (2) -- (3);
    \draw (-0.5,0) arc (0:180:1cm);
    \draw (-2.5,0) .. controls (-2.5,-0.5) and (-2,-2) .. (3);
    \draw (1) .. controls (-0.5,0.25) and (0.75,1) .. (1.25,1);
    \draw (1.25,-1) arc(-90:90:1);
    \draw (2) .. controls (0.5,-0.25) and (0.75,-1) .. (1.25,-1);
    \draw (2.75,0) arc (0:180:1.625);
    \draw (2.75,0) .. controls (2.75,-0.75) and (2,-2) .. (3);
 \end{tikzpicture}
 \caption{Triangulation of an orientable marked surface}
 \label{Fig::Triangulation}
 \end{figure}
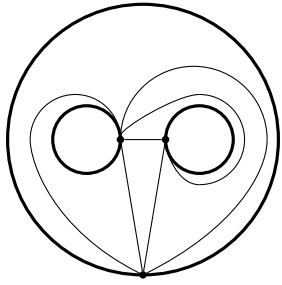
\end{example}

In \cite{FST08}, the authors insist that $\S$ is orientable in order to define a cluster algebra from a triangulation of $\SM$. However, in \cite{DP15}, they extend the notion of a cluster algebra in the case of non-orientable surfaces. We first focus on the orientable case first studied to create a (classical) cluster algebra.

\begin{subsection}{Cluster Algebras from Orientable Surfaces}\label{subsection::orientablecase}
Let $\SM$ be a marked surface such that $\S$ is orientable in this subsection. Let $T = \{\tau_1, \tau_2, \dots, \tau_n\}$ be a triangulation of $\SM$. 
We associate an indeterminate $y_b$ to each boundary segment $b$ of $\partial S$, such that $\y = \left\{ y_b | b \in \mathbb{B} \right\}$ is algebraically independent. The elements of $\y$ are called \emph{geometric coefficients}.
Let $k \in \mathbb{N}$ and suppose $\F$ is a field of rational functions with $n+k$ indeterminates, that is
\[ \F = \left\{ \left. \frac{p(x_1, x_2 \dots, x_{n}, y_1, y_2 \dots y_m)}{q(x_1, x_2 \dots, x_{n}, y_1, y_2, \dots, y_m)} \right| p, q \in \Z[x_1, x_2 \dots, x_{n}, y_1, y_2 \dots, y_m]\right\}. \]
 We set the \dfn{ground ring} to be
\[ \mathbb{Z}\mathbb{P} = \mathbb{Z}[y_b^{\pm 1}\mid b\in \SM].\]

\begin{definition}
 An \emph{initial seed} is a triplet $(\x,\y,T)$ such that: 
 \begin{itemize}
     \item $T$ is a triangulation of $\SM$;
     \item $\x = \{ x_{\t} | \t \in T \}$ is a \emph{cluster} and together with $\y$, it forms a generating set of $\F$.
 \end{itemize}
\end{definition}

In other words, an initial seed is a triangulation with each of its arcs associated with a variable $x_i$ if it is internal and $y_i$ if it is on the boundary. With this, we are able to obtain new seeds from initial seed by a recursive process known as \textit{mutation}. The seeds generated from mutation form \textit{cluster variables}.

\begin{definition}\label{definition::orientablemutation}
 Let $(\x,\y, T)$ be an initial seed and $t \in T$. The \textbf{mutation} of $(\x,\y, T)$ in the direction $t$ transforms the seed $(\x, \y, T)$ in a new \textit{seed} $\mu_t(\x, \y, T) = (\mu_t(\x), \y, \mu_t(T))$ where $\x = \x \setminus \{ x_t \} \cup \{ x_{t'} \}$ with $x_{t'}$ such that if the arc $t$ is the diagonal in a quadrilateral $abcd$ that creates two triangles as shown in Figure \ref{Fig::Mutation}, then the relation is the following: 
    \begin{equation} \label{Eq::Mutation1}
     x_tx_t'=x_ix_k+x_jx_l.
    \end{equation}
This is called the \emph{Ptolemy relation} because it stems from the relation that holds among the lengths of the sides and diagonals of a regular quadrilateral with unit length sides inscribed in a circle known as Ptolemy's Theorem.
    
 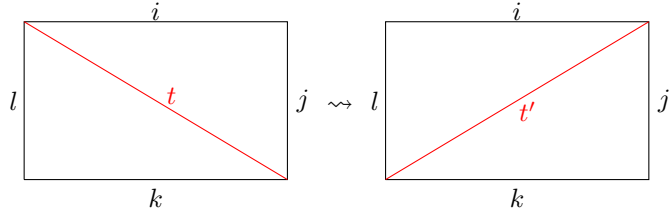
\begin{figure}[h]
 \centering
  \begin{tikzpicture} [baseline={([yshift=-.5ex]current bounding box.center)}, scale=0.7]
\draw (0,0)  -| (5,3) 
    node[pos=0.25,below] {$k$} 
    node[pos=0.75,right] {$j$}
    -| (0,0);
    \node at (2.5,3.2) {$i$};
    \node at (-0.2,1.5) {$l$};
    
    \draw [red](5,0) --  (0,3);
    \node [red] at (2.8,1.6){$t$};
   
 \end{tikzpicture}
   $\rightsquigarrow$
  \begin{tikzpicture} [baseline={([yshift=-.5ex]current bounding box.center)}, scale=0.7]
 \draw (0,0)  -| (5,3) 
    node[pos=0.25,below] {$k$} 
    node[pos=0.75,right] {$j$}
    -| (0,0);
    \node at (2.5,3.2) {$i$};
    \node at (-0.2,1.5) {$l$};
    
    \draw [red](0,0) --  (5,3);
    \node [red] at (2.7,1.3){$t'$};
   
 \end{tikzpicture}
 \caption{Mutation via flipping the diagonal of a quadrilateral.}
 \label{Fig::Mutation}
 \end{figure}
 

\end{definition}

Let $\T$ be the union of all triangulations of a marked surfaces $\SM$ obtainable by a sequence of mutations starting from a fixed initial seed $(\x, \y, T)$. The cluster algebra $\A\SM$ is the algebra generated by $\T$ over the ground ring $\mathbb{Z}\mathbb{P}$ i.e. $\A\SM = \mathbb{Z}\mathbb{P}[\T]$. 
With this model using a surface, we have the following correspondence given by \cite{FST08}. 

\begin{theorem}{\cite{FST08}}
Let $\SM$ be a marked surface and let $\A = \A\SM$ be the associated cluster algebra. There are bijections
$$\{\text{cluster variables of }\A\} \longleftrightarrow \{\text{arcs of } \SM\}$$
$$\{\text{clusters of }\A\} \longleftrightarrow \{\text{triangulations of } \SM\},$$
where quasi-cluster variables are the generators of the associated quasi-cluster algebra, grouped into sets called clusters. 

Moreover, let $T$ be a triangulation of $\SM$, let $\t \in T$ be an internal arc and let $\t'$ be the arc obtained by flipping $\t$ in $T$. Then, cluster mutation $\mu_t(\x_T)$ in direction $t$ is compatible with flip $\mu_t(T)$ in a quadrilateral of the arc $\t$, that is,

$$\mu_t(\x_T) = (\x_T \setminus \{x_{\t}\}) \cup \{x_{\t'}\} $$

\noindent corresponds to 

$$\mu_t(T) = (T \setminus \{\t_t\}) \cup \{\t_{t'}\}.$$
\end{theorem}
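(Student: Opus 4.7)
The plan is to build both bijections simultaneously by exploring the flip graph from the initial triangulation $T$, with compatibility emerging as a byproduct of the construction. First, I would establish the local picture: any internal arc $\t$ of a triangulation $T'$ sits as the diagonal of a (possibly self-folded) quadrilateral formed by the two triangles containing it, so the \emph{flip} $\mu_\t(T') = (T' \setminus \{\t\}) \cup \{\t'\}$ replaces $\t$ by the opposite diagonal $\t'$ and is an involution on triangulations. Then I would define candidate maps $\Phi: \{\text{arcs}\} \to \{\text{cluster variables}\}$ and $\Psi: \{\text{triangulations}\} \to \{\text{clusters}\}$ by setting $\Phi(\t_i) = x_i$ for $\t_i \in T$, $\Psi(T) = \x$, and propagating through flips using the Ptolemy relation \eqref{Eq::Mutation1}: whenever $T''$ is obtained from $T'$ by flipping $\t$ to $\t'$ inside a quadrilateral with sides of types $i,j,k,l$, define $\Phi(\t') = (x_ix_k + x_jx_l)/\Phi(\t)$. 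Connectivity of the flip graph, which I would prove by induction on the number of arcs of $T'$ not in $T$ via a standard cut-and-decompose argument, guarantees that $\Phi$ and $\Psi$ are at least defined on every triangulation and arc that is reachable from $T$.

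The main obstacle is well-definedness: different sequences of flips from $T$ to the same $T'$ must give the same cluster variables and cluster. I would reduce this to local commutation relations in the flip graph. Two flips at disjoint arcs commute trivially (a ``square'' relation). The essential case is two flips that share a triangle; the resulting cycle in the flip graph has length five, and a direct computation using the Ptolemy exchange shows that five successive flips around such a pentagon return the original Laurent expressions. A classical argument that every closed loop in the flip graph is a composition of these square and pentagon relations then forces consistency along any sequence of mutations, so $\Phi$ and $\Psi$ are well-defined functions.

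Finally, to conclude the theorem, I would verify injectivity of $\Phi$ via the denominator of the Laurent expansion of $\Phi(\t)$ in the initial cluster $\x$. By induction on the mutation distance, this denominator records the intersection numbers $\text{cross}(\t,\t_i)$ with the arcs $\t_i \in T$; since an unoriented arc is determined up to isotopy by its crossing pattern with a fixed triangulation, distinct arcs give distinct cluster variables. Injectivity of $\Psi$ then follows because a cluster determines the underlying set of arcs. Surjectivity in both cases is immediate from the definitions of cluster variable and cluster, since each is produced by some sequence of mutations from the initial seed. Compatibility of cluster mutation with flip is automatic, because the rule used to propagate $\Phi$ along a flip is by construction the Ptolemy exchange \eqref{Eq::Mutation1}, which is precisely cluster mutation in direction $t$.
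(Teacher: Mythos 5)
This theorem is quoted from \cite{FST08} and the paper supplies no proof of it, so there is no internal argument to compare against; I will instead assess your sketch on its own terms. Your architecture (connectivity of the flip graph, propagation of variables by Ptolemy exchanges, consistency via local relations, injectivity via denominators) is a legitimate and well-known route, but it is not the one taken in \cite{FST08}, where the cluster algebra is defined abstractly from the signed adjacency matrix of a triangulation, the key lemma is that flips commute with matrix mutation, and the bijections are then obtained by identifying the cluster complex with the arc complex. Your route buys a more self-contained, hands-on construction of the cluster variables; the price is that you must prove consistency yourself rather than inheriting it from the Fomin--Zelevinsky formalism.

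That price is where the genuine gaps sit. First, your well-definedness argument rests on the assertion that every closed loop in the flip graph is a composite of square and pentagon relations; this is equivalent to simple connectivity of the (two-dimensional) exchange complex and is a substantive theorem about arc complexes of surfaces, not a ``classical argument'' one can wave at --- it is in fact one of the main technical points of \cite{FST08}. Second, your injectivity step asserts both that the denominator vector of $\Phi(\t)$ records the crossing numbers with the initial triangulation and that an arc is determined by those crossing numbers. The former requires an argument that no cancellation occurs in the Laurent expressions produced by the exchange relations, and the latter (injectivity of normal coordinates on arcs) needs proof; both claims actually fail for punctured surfaces without passing to tagged arcs, which is precisely why \cite{FST08} is forced to introduce them. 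Relatedly, your opening claim that every internal arc is the diagonal of a ``possibly self-folded'' quadrilateral and hence flippable is false in the presence of self-folded triangles (the enclosed radius admits no ordinary flip); your argument survives here only because the ambient paper restricts to unpunctured surfaces, and you should say so explicitly rather than suggest the self-folded case is covered.
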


\end{subsection}

\begin{subsection}{Quasi-Cluster Algebras}\label{subsection::quasiCA}
We now extend the notion of a cluster algebra to a non-orientable surface following the work of \cite{DP15}. To properly describe this constructions, we review some topology of non-orientable surfaces.

\begin{subsubsection}{Non-Orientable Surfaces}\label{subsubsection::topology}
All compact, connected surfaces have been classified. Namely, any non-orientable surface is homeomorphic to a connected sum of $k$ projective planes $\mathbb{R}\mathbb{P}^2$, where the number $k$ is called the non-orientable genus of the surface.

To visually represent non-orientable surfaces, we use an identification of the real projective plane $\mathbb{R}\mathbb{P}^2$ as the quotient of the 2-sphere under the antipodal map. Cutting this sphere along the equator, the projective plane is homeomorphic to a disk with opposite points on the boundary identified, which is called a \textit{crosscap} which we denote of the surface using $\bigotimes$. Using the idea of cross-caps, any genus $k$ non-orientable surface can be represented as a sphere with $k$ open disks removed and the opposite points of each boundary components identified.

\begin{example}
A disk with one crosscap in the interior is homeomorphic to a M\"obius strip. We demonstrate this homeomorphism in Figure \ref{Fig::homeomorphism}.

\begin{figure}
 \centering
 \begin{tikzpicture}[baseline={([yshift=-.5ex]current bounding box.center)}, scale=0.5]
    \draw[very thick] (0,0) circle (2cm);
  
    \draw[thick] (0, 0) circle (0.25cm);
    \draw[rotate=45] (0,-0.25) -- (0,0.25);
    \draw[rotate=45] (-0.25,0) -- (0.25, 0);
 \end{tikzpicture}
 $\cong$
  \begin{tikzpicture}[baseline={([yshift=-.5ex]current bounding box.center)}, scale=0.5]
    \draw[very thick] (0,0) circle (2cm);
  
    
    \filldraw[color=black, fill = gray] (0, 0) circle (0.25cm);
    \draw[latex-] (0.3,0.5)..  controls (0,0.6).. (-0.3,0.5);
    \draw[latex-] (-0.3,-0.5)..  controls (0,-0.6).. (0.3,-0.5);
    
    \draw[dotted,red] (-2,0)--(-0.25,0);
    \draw[dotted,blue] (2,0)--(0.25,0);
    
 \end{tikzpicture}
  $\cong$
  \begin{tikzpicture}[baseline={([yshift=-.5ex]current bounding box.center)}, scale=0.5]
  \draw (-2,4)--(0,4);
  \draw[latex-] (-1,2.5)--(-2,2.5);
  \draw (-1,2.5)--(0,2.5);
  \draw (-1,2)--(-2,2);
  \draw[-latex] (0,2)--(-1,2);
  \draw (-2,0.5)--(0,0.5);
  \draw[blue] (-2,4) -- (-2,3.5);
  \draw[blue,latex-] (-2,3.5) -- (-2,2.5);
  \draw[blue,latex-] (-2,1) -- (-2,2);
  \draw[blue] (-2,1) -- (-2,0.5);
  \draw[red,latex-] (0,3) -- (0,4);
  \draw[red] (0,3) -- (0,2.5);
  \draw[red] (0,1.5) -- (0,2);
  \draw[red,-latex] (0,0.5) -- (0,1.5);
 \end{tikzpicture}
 $\cong$
   \begin{tikzpicture}[baseline={([yshift=-.5ex]current bounding box.center)}, scale=0.5]
  \draw (-2,4)--(0,4);
  \draw[latex-] (-1,2.5)--(-2,2.5);
  \draw (-1,2.5)--(0,2.5);
  \draw (-1,2)--(-2,2);
  \draw[-latex] (0,2)--(-1,2);
  \draw (-2,0.5)--(0,0.5);
  \draw[blue] (-2,4) -- (-2,3.5);
  \draw[blue,latex-] (-2,3.5) -- (-2,2.5);
  \draw[blue,latex-] (0,1) -- (0,2);
  \draw[blue] (0,1) -- (0,0.5);
  \draw[red,latex-] (0,3) -- (0,4);
  \draw[red] (0,3) -- (0,2.5);
  \draw[red] (-2,1.5) -- (-2,2);
  \draw[red,-latex] (-2,0.5) -- (-2,1.5);
 \end{tikzpicture}
 $\cong$
   \begin{tikzpicture}[baseline={([yshift=-.5ex]current bounding box.center)}, scale=0.5]
  \draw (-2,3.5)--(0,3.5);
  \draw (-2,2)--(0,2);
  \draw (-2,0.5)--(0,0.5);
  \draw[blue] (-2,3.5) -- (-2,3);
  \draw[blue,-latex] (-2,2) -- (-2,3);
  \draw[blue,latex-] (0,1) -- (0,2);
  \draw[blue] (0,1) -- (0,0.5);
  \draw[red,latex-] (0,2.5) -- (0,3.5);
  \draw[red] (0,2) -- (0,2.5);
  \draw[red] (-2,1.5) -- (-2,2);
  \draw[red,-latex] (-2,0.5) -- (-2,1.5);
 \end{tikzpicture}

 \caption{The disk with a single crosscap in the interior is the M\"obius strip.}
 \label{Fig::homeomorphism}
 \end{figure}
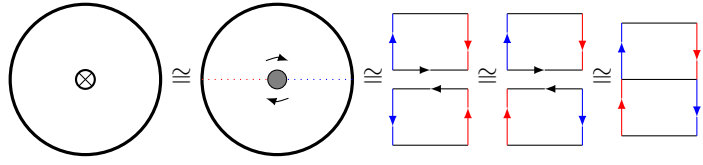

 \end{example}
One fact about non-orientable surfaces that we will employ later on is a formula for the Euler characteristic.
\begin{proposition}\label{prop::eulerchar}
The Euler characteristic of a non-orientable surface $\S$ of genus $k$ is given by $\chi(\S)=2-k$.
\end{proposition}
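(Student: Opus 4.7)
The plan is to give a standard CW-complex computation, leveraging the classification statement sketched just above the proposition, namely that every non-orientable closed surface of genus $k$ is homeomorphic to a connected sum of $k$ copies of $\mathbb{RP}^2$. Since the Euler characteristic is a topological invariant, it suffices to produce any convenient CW decomposition of some model of the genus-$k$ non-orientable surface and read off $V - E + F$.

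The cleanest model I would use is the $2k$-gon with boundary word $a_1 a_1 a_2 a_2 \cdots a_k a_k$, which is the standard polygonal presentation of the non-orientable surface of genus $k$. First I would check that in this identification space all $2k$ boundary vertices get glued to a single vertex (an easy direct verification by following the identifications around the polygon), so $V = 1$. The $2k$ edges are identified in $k$ pairs, giving $E = k$. The interior of the polygon is a single $2$-cell, giving $F = 1$. Adding these up yields
\begin{equation*}
\chi(\S) = V - E + F = 1 - k + 1 = 2 - k,
\end{equation*}
which is the claim.

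As a sanity check I would also verify this by induction using additivity of $\chi$ under connected sum, $\chi(A \# B) = \chi(A) + \chi(B) - 2$, with the base case $\chi(\mathbb{RP}^2) = 1$ (one vertex, one edge, one $2$-cell in the quotient of the disk by the antipodal map on its boundary). The inductive step gives $\chi(N_k) = \chi(N_{k-1}) + \chi(\mathbb{RP}^2) - 2 = (2-(k-1)) + 1 - 2 = 2 - k$.

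There is no real obstacle here: the only small subtlety is checking the vertex count in the polygonal model, which is a purely combinatorial bookkeeping exercise. Everything else is an appeal to standard classification and invariance facts that the paper is tacitly assuming from surface topology.
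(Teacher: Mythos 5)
Your argument is correct, but note that the paper does not actually prove this proposition: it is stated as a known consequence of the classification of compact surfaces (recalled in the preceding paragraph) and is simply used later, in the proof of Proposition \ref{prop::numofarcsformula}, as the starting identity $\chi(X)=2-g$. So there is nothing in the paper to compare your proof against line by line; what you have supplied is a self-contained justification of a fact the authors take for granted. Your CW computation on the $2k$-gon with boundary word $a_1a_1a_2a_2\cdots a_ka_k$ is the standard one and is carried out correctly: the vertex identifications do collapse all $2k$ corners to a single $0$-cell (each pair $a_ia_i$ forces $P_{2i-2}\sim P_{2i-1}\sim P_{2i}$, chaining around the polygon), the $2k$ edges glue in pairs to give $E=k$, and the single $2$-cell gives $F=1$, so $\chi = 1-k+1 = 2-k$. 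The secondary induction via $\chi(A\# B)=\chi(A)+\chi(B)-2$ with base case $\chi(\mathbb{RP}^2)=1$ is also sound and arguably meshes better with how the paper introduces the genus (as the number of crosscaps/summands of $\mathbb{RP}^2$). One small caution: the proposition concerns the closed genus-$k$ non-orientable surface, which is what you computed; the paper's marked surfaces have boundary, and the authors account for that separately by subtracting the number of boundary components in the proof of Proposition \ref{prop::numofarcsformula}, so your statement is the right one to prove here.
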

\end{subsubsection}

We now define the necessary data on our orientable surface that will eventually produce a \textit{quasi-cluster algebra}. In particular, we will be considering the same regular arcs from orientable case together with another special type of arc to non-orientable surfaces. From now on, assume that $\S$ need not be orientable. 

\begin{definition}\label{def::quasiarc}
A \textit{quasi-arc} in $\SM$ is a simple closed curve in the interior of $\S$ that traverses through a crosscap. To illustrate this, see Figure \ref{Fig::quasi_arc}.
\end{definition}

\begin{figure}
 \centering
 \begin{tikzpicture} [scale = 0.7]
    \draw[very thick] (0,0) circle (2cm);
  
    \draw[thick] (0, 0) circle (0.25cm);
    \draw[rotate=45] (0,-0.25) -- (0,0.25);
    \draw[rotate=45] (-0.25,0) -- (0.25, 0);

    \draw (0.25,0) ..  controls (1,0) and (0.55,0.95) ..  (0,1);
    \draw (0,1) .. controls (-0.55,0.95) and (-1,0)  .. (-0.25,0);
 \end{tikzpicture}
\caption{The quasi arc on a M\"obius strip.}
\label{Fig::quasi_arc}
\end{figure}
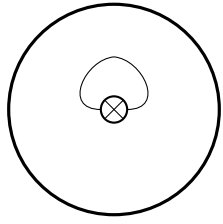

\begin{remark}
Note that in \cite{DP15}, the authors refer to quasi-arcs as either regular arcs or this quasi-arc we define in Definition \ref{def::quasiarc}. We choose to refer to ``arcs" as the union of regular arcs and quasi-arcs. We do this to reserve the language of quasi-arcs to the case we've defined in Definition \ref{def::quasiarc}.
\end{remark}

Using both regular arcs and quasi-arcs, we extend the notion of triangulation - allowing quasi-arcs. As before, we will assume two arcs are compatible if they do not cross each other up to isotopy.

\begin{definition}\label{def::quasitriangulation}
A \textit{quasi-triangulation} of $\SM$ is a maximal collection of compatible arcs (possibly using quasi-arcs). A quasi-triangulation is called a triangulation if it consists only of regular arcs.
\end{definition}

In quasi-triangulations, many different triangles can arise. 

\begin{definition}
A \emph{regular triangle} is a triangle cut by three regular arcs (some potentially on the boundary). An \emph{anti-self-folded} triangle is a triangle of a quasi-triangulation with two edges identified by an orientation-reversing isometry. The last type of triangle is a \emph{quasi-triangle} which is a triangle consisting of a quasi-arc inscribed in a loop based a marked point. See Figure \ref{Fig::typesoftriangles} for examples of these various types of triangles. 
\end{definition}

\begin{figure}
    \centering
    \begin{tikzpicture} [baseline={([yshift=-.5ex]current bounding box.center)}, scale=1]
    \draw[very thick] (0,0) circle (3cm)
    node [above] at (0,3){$i$}
    node [below] at (0,-3){$j$};
    
    \tikzstyle{p}=[circle,fill, scale=0.3]
    \node[p] (1) at (-3,0) {};
    \node[p] (2) at (3,0) {};
    
    \draw [thick](1,1) circle (0.25cm);
    \draw [thick](-1,-1) circle (0.25cm);
    \draw[xshift=-1cm,yshift=-1cm,rotate=45] (0,-0.25) -- (0,0.25);
    \draw[xshift=-1cm,yshift=-1cm,rotate=45] (-0.25,0) -- (0.25, 0);
    \draw[xshift=1cm,yshift=1cm,rotate=45] (0,-0.25) -- (0,0.25);
    \draw[xshift=1cm,yshift=1cm,rotate=45] (-0.25,0) -- (0.25, 0);
    
     \draw [xshift=-1cm,yshift=-1cm](0.25,0) ..  controls (1,0) and (0.55,-0.65) ..  (0,-0.67);
    \draw [xshift=-1cm,yshift=-1cm] (0,-0.67) .. controls (-0.55,-0.65) and (-1,0)  .. node[very near start, above]{$\t_1$} (-0.25,0);
    
    \draw (1) .. controls (-1,-0.25) and (-0.3,-0.2) .. node [very near end,right]{$\t_2$}(-0.2,-1.5);
    \draw (-0.2,-1.5) .. controls (-0.3,-2.5) and (-1,-2.75) .. (1);
    
    \draw (1) .. controls (-1,1) .. node [very near end,right]{$\t_3$}(0,0);
    \draw (0,0) .. controls (1,-1) .. (2);
    
    \draw (2) .. controls (1,0.25) and (0.3,0.2) .. (0.2,1.5);
    \draw (0.2,1.5) .. controls (0.3,2.5) and (1,2.75) .. node [near start,left]{$\t_4$}(2);
    
    \draw (1,0.75) .. controls (1.25,0.35) .. node [midway,above]{$\t_5$}(2);
    \draw (1,1.25) .. controls (1.27,1.5) .. (2);
 \end{tikzpicture}
\caption{An example of a quasi-triangulation on the M\"obius strip with two marked points. The triangle enclosed by arcs $i, \t_3, \t_4$ is an example of a regular triangle. The triangle enclosed by the arcs $\t_4, \t_5$ is an anti-self-folded triangle. And the triangle enclosed by $\t_2$ and the quasi-arc $\t_1$ is a quasi-triangle. }
    \label{Fig::typesoftriangles}
\end{figure}
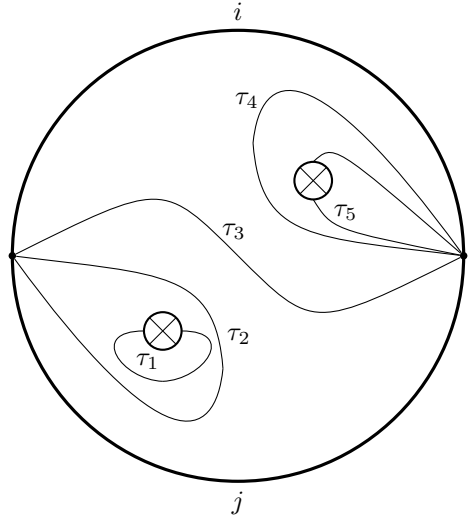

\begin{lemma}[\cite{DP15} Proposition 3.2]
\label{lem::typearc}
Let $T$ be a triangulation of $(\S,\M)$. An arc of $T$ cuts either \begin{enumerate} [i)]
    \item \label{type1} two different regular triangles;
    \item \label{type2} only one regular triangle;
    \item \label{type3} one regular triangle and one quasi-triangle;
    \item \label{type4} only one quasi-triangle.
\end{enumerate}
\end{lemma}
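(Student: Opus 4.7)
The plan is to carry out a case analysis driven by a preliminary classification of the complementary regions (``faces'') of a quasi-triangulation. By maximality of $T$, no further compatible arc can be added, so I first claim that each connected component of $\mathbf{S} \setminus T$ is either a regular triangle, an anti-self-folded triangle, or a quasi-triangle. This goes by bounding the number of bounding edges of a face: any face with more than three boundary arcs admits an internal diagonal compatible with $T$, contradicting maximality; one then classifies the remaining three-edged possibilities near crosscaps to distinguish the anti-self-folded and quasi-triangle cases. A useful sanity check is a Euler characteristic computation, summing contributions from faces, edges, and marked points and invoking Proposition \ref{prop::eulerchar} for the non-orientable contribution.

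Given this face classification, I analyse $\tau$ according to whether it is regular or quasi. If $\tau$ is a quasi-arc, a tubular neighborhood of $\tau$ is a M\"obius band (since $\tau$ traverses a crosscap), so $\tau$ borders only a single face. This face must contain the crosscap, and by the classification it is a quasi-triangle, giving case \eqref{type4}. If $\tau$ is a regular arc, a tubular neighborhood is an annulus with two boundary circles, each meeting some face. If these two faces are distinct they yield case \eqref{type1} or \eqref{type3} according to whether a quasi-triangle appears; if the two faces coincide, then $\tau$ occurs as two distinct edges of one face. A quasi-triangle has only one regular edge (its outer loop) and so cannot play this role, while a regular triangle whose two edges are identified by an orientation-reversing isometry is by definition anti-self-folded, yielding case \eqref{type2}.

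The main obstacle is the face classification itself, in particular showing that a three-edged face touching a crosscap must be either anti-self-folded or a quasi-triangle rather than some other exotic configuration. This requires careful use of maximality together with the topological constraint that the crosscap must either be enclosed by a loop (producing a quasi-triangle with the quasi-arc inscribed) or be cut off into an anti-self-folded configuration; one also has to rule out the creation of monogons or bigons, which is forbidden by Definition \ref{definition::regulararc}. Once the local neighborhood type of $\tau$ (annulus versus M\"obius band) is identified, the remainder of the proof is a direct matching of sides with faces.
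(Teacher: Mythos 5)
First, a point of comparison that matters here: the paper does not prove this statement at all. It is imported verbatim as Proposition 3.2 of \cite{DP15}, so there is no in-paper argument to measure your proposal against; what you have written is an attempt at reconstructing Dupont--Palesi's proof. Your architecture is the natural one (and essentially theirs): first classify the complementary faces of a maximal compatible collection, then read off the possibilities for the one or two faces incident to a given arc. The second step of your argument is essentially sound: a quasi-arc is a one-sided closed curve, hence meets a single face, giving \eqref{type4}; a regular arc has two local sides, and the dichotomy ``two distinct faces versus the same face twice'' produces \eqref{type1}/\eqref{type3} and \eqref{type2} respectively, with your observation that a quasi-triangle cannot appear twice along a regular arc doing the right work.

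The genuine gap is that the face classification --- the entire technical content of the lemma --- is asserted rather than proved, and your stated criterion does not suffice even in outline. ``More than three boundary arcs implies an internal diagonal'' only handles faces that are disks; a face with three or fewer boundary edges can still carry a crosscap, a handle, or an extra boundary component of $\S$, and each of these also admits a new compatible arc. Ruling these out, and then showing that the two surviving non-disk configurations are exactly the anti-self-folded triangle and the quasi-triangle, is where maximality must actually be exploited; you flag this as ``the main obstacle'' but do not supply the argument, so the proof is incomplete precisely at its load-bearing step. Two smaller corrections: a tubular neighborhood of an embedded \emph{arc} is a disk, not an annulus (an arc through a crosscap --- exactly the type \eqref{type2} arcs --- is still a one-sided object globally, and your annulus language would misclassify it; the correct dichotomy is whether the two local sides of $\t$ land in the same face or in distinct faces, which is what you in fact use). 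And in the same-face branch you must also exclude an orientation-\emph{preserving} identification of the two copies of $\t$, i.e.\ a self-folded triangle; this is where the standing assumption that $\SM$ has no punctures enters, and it should be said.
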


Following this lemma, we define \dfn{arcs of types \eqref{type1}, \eqref{type2}, \eqref{type3}} and \dfn{\eqref{type4}} according to their above description. Note that arcs of type \eqref{type1}, \eqref{type2} and \eqref{type3} are always regular arcs while arcs of type \eqref{type4} are quasi-arcs.

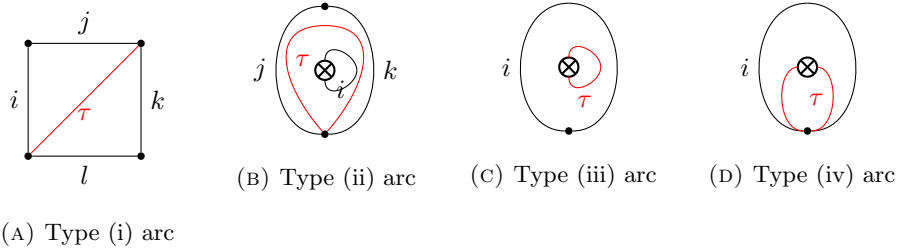
\begin{figure}
    \begin{subfigure}[t]{0.24\textwidth}
        \centering
            \[\begin{tikzpicture}[scale = 1.5]
             \tikzstyle{p}=[circle,fill, scale=0.3]
            \node[p] (1) at (0, 0) {};
            \node[p] (2) at (0, 1) {};
            \node[p] (3) at (1, 1) {};
            \node[p] (4) at (1, 0) {};
            \draw (1) -- node [midway, left] {$i$} (2);
            \draw (2) -- node [midway, above] {$j$}(3);
            \draw (3) -- node [midway, right] {$k$}(4);
            \draw (4) -- node [midway, below] {$l$}(1);
            \draw[red] (1) -- node [right, below] {$\t$} (3);
            \end{tikzpicture}\]
        \caption{Type \eqref{type1} arc}
     \end{subfigure}
     \hfill
     \begin{subfigure}[t]{0.24\textwidth}
        \centering
            \[ \begin{tikzpicture}[baseline={([yshift=-.5ex]current bounding box.center)}, scale=0.85]
            \tikzstyle{p}=[circle,fill, scale=0.3]
            \node[p] (1) at (0, -1) {};
            \node[p] (2) at (0, 1) {};
            \draw[thick] (0, 0) circle (0.15cm);
            \draw[rotate=45, thick] (0,-0.15) -- (0,0.15);
            \draw[rotate=45, thick] (-0.15,0) -- (0.15, 0);
            \draw (1) .. controls (-1,-1) and (-1,1) .. node [left] {$j$} (2);
            \draw (1) .. controls (1,-1) and (1,1) .. node [right] {$k$}(2);
            \draw (0,-0.15) .. controls (0,-0.5) and (0.5,-0.25) ..node [midway] {$i$} (0.5,0);
            \draw (0,0.15) .. controls (0,0.5) and (0.5,0.25) .. (0.5,0);
            
            \draw[red] (1) .. controls (-0.7,-0.2) and (-0.9,0.7) ..node [midway, right] {$\t$} (0,0.7);
            \draw[red] (1) .. controls (0.7,-0.2) and (0.9,0.7) .. (0,0.7);
            \end{tikzpicture}\]
            \caption{Type \eqref{type2} arc}
            \hfill
           \end{subfigure}
\begin{subfigure}[t]{0.24\textwidth}
            \[\begin{tikzpicture}[scale=0.85]
            \tikzstyle{p}=[circle,fill, scale=0.3]
            \node[p] (1) at (0, -1) {};
            \draw[thick] (0, 0) circle (0.15cm);
            \draw[rotate=45, thick] (0,-0.15) -- (0,0.15);
            \draw[rotate=45, thick] (-0.15,0) -- (0.15, 0);
            \draw (1) .. controls (-1,-1) and (-1,1) .. node [left]{$i$}(0,1);
            \draw (1) .. controls (1,-1) and (1,1) .. (0,1);
            \draw[red] (0,-0.15) .. controls (0,-0.5) and (0.5,-0.25) .. node [below]{$\t$}(0.5,0);
            \draw[red] (0,0.15) .. controls (0,0.5) and (0.5,0.25) .. (0.5,0);
            \end{tikzpicture}\]
        \caption{Type \eqref{type3} arc}
     \end{subfigure}
     \hfill
     \begin{subfigure}[t]{0.24\textwidth}
        \centering
            \[ \begin{tikzpicture}[baseline={([yshift=-.5ex]current bounding box.center)}, scale=0.85]
            \tikzstyle{p}=[circle,fill, scale=0.3]
            \node[p] (1) at (0, -1) {};
            \draw[thick] (0, 0) circle (0.15cm);
            \draw[rotate=45, thick] (0,-0.15) -- (0,0.15);
            \draw[rotate=45, thick] (-0.15,0) -- (0.15, 0);
            \draw (1) .. controls (-1,-1) and (-1,1) ..  node [midway, left]{$i$}(0,1);
            \draw (1) .. controls (1,-1) and (1,1) ..  (0,1);
            \draw[red] (1) .. controls (-0.5,-1) and (-0.5,0) .. (-0.15,0);
            \draw[red] (1) .. controls (0.5,-1) and (0.5,0) .. node [midway,left]{$\t$}(0.15,0);
            \end{tikzpicture}\]
        \caption{Type \eqref{type4} arc}
     \end{subfigure}
     \hfill
\caption{Four local configurations that can appear in any quasi-triangulation.}  
    \label{Fig::surfacelocalconfigs}
\end{figure}

\begin{example}
In Figure \ref{Fig::typesoftriangles}, an arc that cuts two different regular triangles i.e. is of type \eqref{type1}, is $\tau_3$. An arc that only cuts one different regular triangle i.e. is of type \eqref{type2}, is $\tau_5$. An arc that cuts one regular triangle and one quasi-triangle i.e. is of type \eqref{type3}, is $\tau_2$.
\end{example}

 From this notion of quasi-triangulation, we can extend the definition of a cluster algebra to this non-orientable setting. Namely, we can define an initial (quasi-) seed. 

\begin{definition} \label{def::quasiseed}
 An \textit{initial seed} is a triplet $(\x,\y,T)$ such that: \begin{itemize}
     \item $T$ is a (quasi-)triangulation of $\SM$;
     \item $\x = \{ x_\t | \t \in T \}$ is a \textit{cluster} and together with $\y$, it forms a generating set of $\F$.
 \end{itemize}
\end{definition}

From this, we define a notion of mutation of these (quasi-)seeds. 

\begin{proposition}\label{prop:mutation}
Let $\SM$ be a marked surface and let $T$ be a quasi-triangulation of $\SM$. Then for any arc $t \in T$, there exists a unique arc $t'\in \SM$ such that $t'\not= t$ and such that $\mu_t(T)=T \backslash \{t\} \sqcup  \{t'\}$ is a quasi-triangulation of $\SM$.
\end{proposition}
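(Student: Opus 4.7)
My plan is to prove the proposition by case analysis based on Lemma \ref{lem::typearc}, which partitions the arcs of a quasi-triangulation into four types according to the pair of (quasi-)triangles they border. For each type I will exhibit an explicit candidate $t'$ and verify that $(T \setminus \{t\}) \cup \{t'\}$ is again a maximal collection of pairwise compatible (quasi-)arcs, then argue uniqueness locally.

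In type \eqref{type1}, $t$ is the diagonal of a quadrilateral $ijkl$ formed by two distinct regular triangles, and I set $t'$ to be the other diagonal of $ijkl$, exactly as in the orientable Fomin--Shapiro--Thurston flip. In type \eqref{type2}, $t$ is the outer loop of an anti-self-folded triangle sitting on a crosscap whose interior regular arc $i$ is identified with itself via the crosscap; I flip $t$ to the quasi-arc that runs from the basepoint of $t$ through the crosscap, thereby turning the anti-self-folded triangle into a quasi-triangle. In type \eqref{type3}, $t$ borders a regular triangle and a quasi-triangle bounded by the interior quasi-arc $q$; I flip $t$ to the unique regular arc that, together with $q$, recreates an anti-self-folded triangle, so that types \eqref{type2} and \eqref{type3} are exchanged under mutation. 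Finally in type \eqref{type4}, $t$ is itself a quasi-arc inscribed in a quasi-triangle based at a marked point $p$, and I flip $t$ to the other isotopy class of quasi-arc through the same crosscap emanating from $p$.

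Existence in each case reduces to a local inspection: the chosen $t'$ lies entirely within the region vacated by $t$ in $T \setminus \{t\}$, so it is compatible with the remaining arcs, and the new region decomposes into triangles of the required types (regular, anti-self-folded, or quasi). For uniqueness, note that $T \setminus \{t\}$ cuts $\SM$ into regions that are all regular or quasi-triangles except for the single region $R$ previously bisected by $t$; the topology of $R$ is either a quadrilateral, a crosscap bigon, or a disk containing a crosscap, and in each case a direct enumeration shows that $R$ admits exactly two non-boundary (quasi-)arcs (namely $t$ and $t'$).

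The main obstacle will be the careful topological bookkeeping in the crosscap cases \eqref{type2}--\eqref{type4}, where I must verify isotopy uniqueness of the replacement arc. In particular, for type \eqref{type4} I need the fact that the set of one-sided simple arcs from a fixed marked point passing through a fixed crosscap, considered up to isotopy within the enclosing loop, has exactly two elements, differing by a half-twist through the crosscap; combined with compatibility with $T \setminus \{t\}$ this pins down $t'$ uniquely.
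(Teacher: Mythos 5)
The paper itself does not prove Proposition \ref{prop:mutation} --- it is imported from Dupont and Palesi \cite{DP15} --- and your overall scheme (case analysis via Lemma \ref{lem::typearc}, existence by a local construction, uniqueness by enumerating the arcs of the region vacated by $t$) is the natural one and essentially theirs. However, the local moves you assign to the crosscap cases are wrong, and they contradict the flips the paper itself records in Definition \ref{def::quasimutation} and Figures \ref{Fig::Mutation2}--\ref{Fig::Mutation4}. A quasi-arc is, by Definition \ref{def::quasiarc}, a simple \emph{closed} curve in the interior of $\S$ with no endpoints on $\M$, so ``the quasi-arc that runs from the basepoint of $t$'' and ``quasi-arc \dots emanating from $p$'' conflate the quasi-arc with the regular arc running from a marked point through the crosscap (the two identified edges of an anti-self-folded triangle). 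The type \eqref{type2} arc is that regular arc through the crosscap, not the outer loop; it flips to the one-sided closed curve and vice versa, so it is types \eqref{type2} and \eqref{type4} that are exchanged under mutation (see the remark after Definition \ref{def::quasimutation}), not \eqref{type2} and \eqref{type3}.

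The most serious failure is your flip for a type \eqref{type3} arc. Such an arc $t$ is the loop enclosing a quasi-triangle, i.e.\ it bounds a M\"obius strip containing the quasi-arc $q$, and $q$ remains in $T\setminus\{t\}$. Any arc that would form an anti-self-folded triangle must itself pass through that crosscap and therefore crosses $q$; your candidate $t'$ is not compatible with $T\setminus\{t\}$ and does not produce a quasi-triangulation. The correct flip replaces $t$ by the loop based at the \emph{other} marked point of the ambient bigon with sides $j,k$ (Figure \ref{Fig::Mutation4}), which is exactly why the exchange relation $(x_j+x_k)^2+x_i^2x_jx_k$ is not binomial. Relatedly, the counting fact you invoke for type \eqref{type4} is false: inside the M\"obius strip cut out by the enclosing loop, with one marked point on it, there is exactly \emph{one} isotopy class of arc from the marked point through the crosscap and exactly one one-sided closed curve, and these two cross (this is consistent with the count $(3m^2-m+2)/2=2$ for $m=1$ used in Section \ref{section::unistructurality}); the flip of the quasi-arc is the former, not ``the other isotopy class of quasi-arc.'' With the four local moves corrected to match Figures \ref{Fig::Mutation1}--\ref{Fig::Mutation4}, your existence and uniqueness argument would go through.
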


Replacing $t$ by $t'$ is also known as the \textit{quasi-mutation}. With this quasi-mutation, we also have quasi-exchange relations given in \cite{DP15}. Since there are different types of arcs in this setting, quasi-mutation and the quasi-exchange relations are defined differently for different types of arcs formalized in the following definition:

\begin{definition} \label{def::quasimutation}
Let $(\x, \y, T)$ be an initial (quasi-)seed and $t \in T$. The \textbf{mutation} of $(\x, \y, T)$ in the direction $t$ transforms the (quasi-)seed $(\x, \y, T)$ in a new (quasi-)seed $\mu_t(\x, \y, T) = (\mu_t(\x), \y, \mu_t(T))$ where $\x = \x \setminus \{ x_t \} \cup \{ x_{t'} \}$ with $x_{t'}$ such that:
\begin{enumerate}
    \item If the arc $t$ is the diagonal in a quadrilateral $ijkl$ as in Definition \ref{definition::orientablemutation}, the exchange relation is given by 
    \begin{equation} \label{Eq::Mutation1}
     x_tx_t'=x_ix_k+x_jx_l.
    \end{equation}
    
 \begin{figure}[h]
 \centering
  \begin{tikzpicture} [baseline={([yshift=-.5ex]current bounding box.center)}, scale=0.7]
\draw (0,0)  -| (5,3) 
    node[pos=0.25,below] {$k$} 
    node[pos=0.75,right] {$j$}
    -| (0,0);
    \node at (2.5,3.2) {$i$};
    \node at (-0.2,1.5) {$l$};
    
    \draw [red](5,0) --  (0,3);
    \node [red] at (2.8,1.6){$t$};
   
 \end{tikzpicture}
   $\rightsquigarrow$
  \begin{tikzpicture} [baseline={([yshift=-.5ex]current bounding box.center)}, scale=0.7]
 \draw (0,0)  -| (5,3) 
    node[pos=0.25,below] {$k$} 
    node[pos=0.75,right] {$j$}
    -| (0,0);
    \node at (2.5,3.2) {$i$};
    \node at (-0.2,1.5) {$l$};
    
    \draw [red](0,0) --  (5,3);
    \node [red] at (2.7,1.3){$t'$};
   
 \end{tikzpicture}
 \caption{Type i mutation}
 \label{Fig::Mutation1}
 \end{figure}
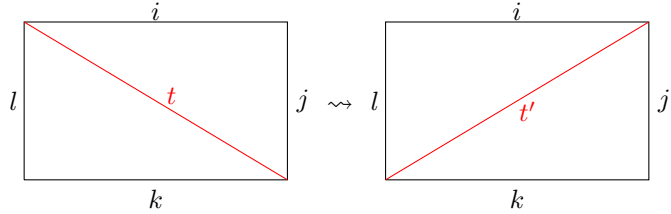

\item Consider an anti-self-folded-triangle, as in Figure \ref{Fig::Mutation2}. Let $t$ be the arc corresponding to the two identified edges. Then, the relation is 

    \begin{equation} \label{Eq::Mutation2}
   x_tx_t' = x_i.
    \end{equation}
    
    
\begin{figure}[h]
 \centering
 \begin{tikzpicture}[baseline={([yshift=-.5ex]current bounding box.center)}, scale=1.5]
            \tikzstyle{p}=[circle,fill, scale=0.3]
            \node[p] (1) at (0, -1) {};
            \draw[thick] (0, 0) circle (0.15cm);
            \draw[rotate=45, thick] (0,-0.15) -- (0,0.15);
            \draw[rotate=45, thick] (-0.15,0) -- (0.15, 0);
            \draw (1) .. controls (-1,-1) and (-1,1) ..  node [midway, left]{$i$}(0,1);
            \draw (1) .. controls (1,-1) and (1,1) ..  (0,1);
            \draw[red] (1) .. controls (-0.5,-1) and (-0.5,0) .. (-0.15,0);
            \draw[red] (1) .. controls (0.5,-1) and (0.5,0) .. node [midway,left]{$\t$}(0.15,0);
            \end{tikzpicture}
  $\rightsquigarrow$
  \begin{tikzpicture}[baseline={([yshift=-.5ex]current bounding box.center)}, scale=1.5]
            \tikzstyle{p}=[circle,fill, scale=0.3]
            \node[p] (1) at (0, -1) {};
            \draw[thick] (0, 0) circle (0.15cm);
            \draw[rotate=45, thick] (0,-0.15) -- (0,0.15);
            \draw[rotate=45, thick] (-0.15,0) -- (0.15, 0);
            \draw (1) .. controls (-1,-1) and (-1,1) .. node [left]{$i$}(0,1);
            \draw (1) .. controls (1,-1) and (1,1) .. (0,1);
            \draw[red] (0,-0.15) .. controls (0,-0.5) and (0.5,-0.25) .. node [below]{$\t'$}(0.5,0);
            \draw[red] (0,0.15) .. controls (0,0.5) and (0.5,0.25) .. (0.5,0);
            \end{tikzpicture}
 \caption{Type ii mutation}
 \label{Fig::Mutation2}
 \end{figure}
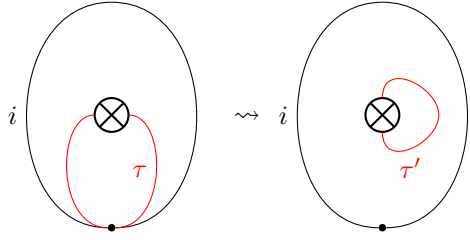

    \item If a one-sided closed curve $t$ is in the triangulation with boundary $i$ as in Figure \ref{Fig::Mutation3}, the relation is
    \begin{equation} \label{Eq::Mutation2}
    x_tx_t' = x_i.
    \end{equation}
    
\begin{figure}[h]
 \centering
 \begin{tikzpicture} [baseline={([yshift=-.5ex]current bounding box.center)}, scale=1.5]
            \tikzstyle{p}=[circle,fill, scale=0.3]
            \node[p] (1) at (0, -1) {};
            \draw[thick] (0, 0) circle (0.15cm);
            \draw[rotate=45, thick] (0,-0.15) -- (0,0.15);
            \draw[rotate=45, thick] (-0.15,0) -- (0.15, 0);
            \draw (1) .. controls (-1,-1) and (-1,1) .. node [left]{$i$}(0,1);
            \draw (1) .. controls (1,-1) and (1,1) .. (0,1);
            \draw[red] (0,-0.15) .. controls (0,-0.5) and (0.5,-0.25) .. node [below]{$\t$}(0.5,0);
            \draw[red] (0,0.15) .. controls (0,0.5) and (0.5,0.25) .. (0.5,0);
            \end{tikzpicture}
 $\rightsquigarrow$
 \begin{tikzpicture} [baseline={([yshift=-.5ex]current bounding box.center)}, scale=1.5]
            \tikzstyle{p}=[circle,fill, scale=0.3]
            \node[p] (1) at (0, -1) {};
            \draw[thick] (0, 0) circle (0.15cm);
            \draw[rotate=45, thick] (0,-0.15) -- (0,0.15);
            \draw[rotate=45, thick] (-0.15,0) -- (0.15, 0);
            \draw (1) .. controls (-1,-1) and (-1,1) .. node [left]{$i$}(0,1);
            \draw (1) .. controls (1,-1) and (1,1) .. (0,1);
            \draw[red] (0,-0.15) .. controls (0,-0.5) and (0.5,-0.25) .. node [below]{$\t'$}(0.5,0);
            \draw[red] (0,0.15) .. controls (0,0.5) and (0.5,0.25) .. (0.5,0);
            \end{tikzpicture}
 \caption{Type iii mutation}
 \label{Fig::Mutation3}
 \end{figure}
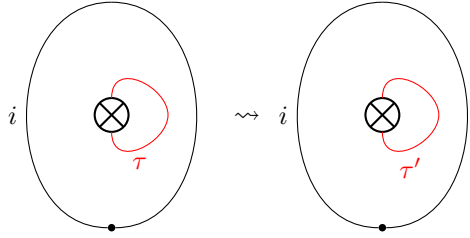
    
    
    \item  Let $t$ be the arc enclosing a quasi-arc $i$ and part of a triangle with other sides $j$ and $k$, as Figure \ref{Fig::Mutation4} shows, then the relation is 

    \begin{equation} \label{Eq::Mutation4}
    x_tx_t' = (x_j+x_k)^2+x_i^2x_jx_k.
    \end{equation}

\begin{figure}
\begin{tikzpicture}[baseline={([yshift=-.5ex]current bounding box.center)}, scale=1.5]
            \tikzstyle{p}=[circle,fill, scale=0.3]
            \node[p] (1) at (0, -1) {};
            \node[p] (2) at (0, 1) {};
            \draw[thick] (0, 0) circle (0.15cm);
            \draw[rotate=45, thick] (0,-0.15) -- (0,0.15);
            \draw[rotate=45, thick] (-0.15,0) -- (0.15, 0);
            \draw (1) .. controls (-1,-1) and (-1,1) .. node [left] {$j$} (2);
            \draw (1) .. controls (1,-1) and (1,1) .. node [right] {$k$}(2);
            \draw (0,-0.15) .. controls (0,-0.5) and (0.5,-0.25) ..node [midway,below] {$i$} (0.5,0);
            \draw (0,0.15) .. controls (0,0.5) and (0.5,0.25) .. (0.5,0);
            
            \draw[red] (2) .. controls (-0.7,0.2) and (-0.9,-0.7) ..node [midway, right] {$\t$} (0,-0.7);
            \draw[red] (2) .. controls (0.7,0.2) and (0.9,-0.7) .. (0,-0.7);
            \end{tikzpicture}
 $\rightsquigarrow$
\begin{tikzpicture}[baseline={([yshift=-.5ex]current bounding box.center)}, scale=1.5]
            \tikzstyle{p}=[circle,fill, scale=0.3]
            \node[p] (1) at (0, -1) {};
            \node[p] (2) at (0, 1) {};
            \draw[thick] (0, 0) circle (0.15cm);
            \draw[rotate=45, thick] (0,-0.15) -- (0,0.15);
            \draw[rotate=45, thick] (-0.15,0) -- (0.15, 0);
            \draw (1) .. controls (-1,-1) and (-1,1) .. node [left] {$j$} (2);
            \draw (1) .. controls (1,-1) and (1,1) .. node [right] {$k$}(2);
            \draw (0,-0.15) .. controls (0,-0.5) and (0.5,-0.25) ..node [midway,below] {$i$} (0.5,0);
            \draw (0,0.15) .. controls (0,0.5) and (0.5,0.25) .. (0.5,0);
            
            \draw[red] (1) .. controls (-0.7,-0.2) and (-0.9,0.7) ..node [midway, right] {$\t'$} (0,0.7);
            \draw[red] (1) .. controls (0.7,-0.2) and (0.9,0.7) .. (0,0.7);
            \end{tikzpicture}
             \caption{Type iv mutation}
 \label{Fig::Mutation4}
 \end{figure}
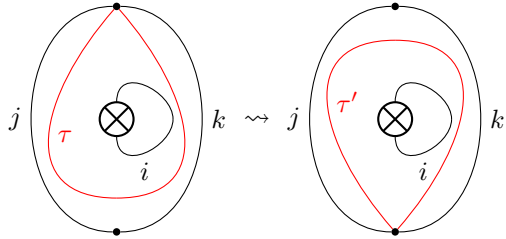

 \end{enumerate}
 
In general, a (quasi-)\textit{seed} is obtained by a sequence of mutations from the initial (quasi-)seed.
\end{definition}

\begin{remark}
Remark that the mutation of an arc of type \eqref{type2} gives a quasi-arc and vice versa. Besides, the type of a regular arc may change by performing flips on this arc or around it.
\end{remark}

The definition of (quasi)-seeds and quasi-mutations lead naturally to the definition quasi-cluster algebra.

\begin{definition}
Let $\X$ denote the union of all clusters obtained by successive quasi-mutation from an initial seed $(\x, \y, T)$. Define the \emph{quasi-cluster algebra} to be the polynomial algebra in variables $\X$ with coefficients in $\mathbb{Z}\mathbb{P}$ i.e. 

$$\A(\x,T) = \mathbb{Z}\mathbb{P}[\X]$$
\end{definition}

From these quasi-mutations, we can create something called the \emph{flip graph}, where the vertices are (quasi-)seeds and the edges are a single quasi-mutation. 

\begin{proposition}\label{prop::flipgraph}
\cite{DP15} The flip graph of quasi-triangulations of a marked surface $\SM$ is connected. 
\end{proposition}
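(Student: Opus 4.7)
The plan is to argue by induction on the non-orientable genus $k$ of $\S$. The base case $k=0$ reduces to the classical statement that the flip graph of triangulations of an orientable marked surface is connected, proved by Fomin-Shapiro-Thurston in \cite{FST08}. For the inductive step with $k \geq 1$, I would fix two quasi-triangulations $T$ and $T'$ of $\SM$ and aim to reduce to a surface of strictly smaller non-orientable genus by ``peeling off'' a crosscap.

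The key reduction is to arrange, after a bounded sequence of flips, that both $T$ and $T'$ contain a common quasi-arc $\gamma$ through a chosen crosscap $c$. To do this I would analyze how $c$ sits inside an arbitrary quasi-triangulation via Lemma \ref{lem::typearc}: either $c$ is already enclosed in a quasi-triangle (so a quasi-arc through $c$ is present), or $c$ is enclosed by an anti-self-folded triangle whose type \eqref{type2} edge can be flipped directly to a quasi-arc by Definition \ref{def::quasimutation}(2), or $c$ lies inside a larger triangulated region, which can be collapsed to the anti-self-folded case by an ``ear-cutting'' sequence of flips on type \eqref{type1} arcs along the boundary of that region. This would show that both $T$ and $T'$ can be flipped so as to contain a quasi-arc through $c$, say $\gamma_T$ and $\gamma_{T'}$.

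To make $\gamma_T$ and $\gamma_{T'}$ coincide, I would note that cutting $\S$ along such a one-sided curve produces a marked surface $(\S'',\M'')$ with one fewer crosscap (and one additional boundary component), and that two choices of quasi-arc through $c$ correspond to two different ways of performing this cut. After aligning $\gamma_T = \gamma_{T'} = \gamma$—achievable by invoking the inductive hypothesis on $\S''$ applied to the two different cuts—the complements $T \setminus \{\gamma\}$ and $T' \setminus \{\gamma\}$ become quasi-triangulations of $(\S'',\M'')$. The induction hypothesis gives a sequence of flips in $\S''$ connecting them, and each such flip lifts to a flip in $\S$ fixing $\gamma$, completing the argument.

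The principal technical obstacle is the first reduction step: showing that any quasi-triangulation can be flipped so as to contain a quasi-arc through a chosen crosscap. This requires a careful case analysis across the four local configurations of Lemma \ref{lem::typearc} and verification that each ear-cutting flip preserves the quasi-triangulation property (in particular, that the local polygon surrounding $c$ always contains a flippable type \eqref{type1} arc until only the anti-self-folded configuration remains). A secondary subtlety is the alignment of two quasi-arcs through the same crosscap, which I would handle by a small induction-inside-the-induction on the combinatorial distance between the two cut surfaces.
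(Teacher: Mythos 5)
First, note that the paper does not actually prove this proposition; it is quoted from Dupont--Palesi \cite{DP15}, whose argument reduces any quasi-triangulation to an ordinary triangulation (by flipping each quasi-arc, a type~\eqref{type4} flip, into a regular arc) and then invokes connectivity of the flip graph of ordinary triangulations. Your induction on the non-orientable genus is therefore a genuinely different route, but as written it has two gaps that are not merely technical.

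The first is the cutting step. A quasi-arc $\gamma$ is a one-sided simple closed curve, and cutting $\S$ along it produces a new boundary circle (the orientation double cover of $\gamma$) carrying \emph{no} marked points; the result is not a marked surface in the sense of Definition~\ref{definition::markedsurface}, and $T \setminus \{\gamma\}$ is not a quasi-triangulation of it --- the quasi-triangle containing $\gamma$ becomes an annular face with a markless boundary component, not a union of triangles. So the claimed correspondence between flips in $\S''$ fixing $\gamma$ and flips in $\S$ breaks down. The natural repair is to cut instead along the loop $\ell$ bounding the quasi-triangle that inscribes $\gamma$ (which is itself an arc of $T$, based at a marked point), discard the M\"obius-band piece, and induct on the remaining surface, whose new boundary component does carry a marked point. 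The second and more serious gap is the alignment step. ``A quasi-arc through a chosen crosscap'' is not well defined: the crosscap decomposition of $\S$ is not canonical, and there are in general infinitely many pairwise non-isotopic one-sided simple closed curves, so arranging $\gamma_T = \gamma_{T'}$ up to isotopy is essentially the full strength of the connectivity statement restricted to one-sided curves. Your appeal to ``the inductive hypothesis on $\S''$ applied to the two different cuts'' is circular, since the two cut surfaces are a priori different (or rather, identifying them is exactly what must be constructed), and the proposed ``induction on the combinatorial distance between the two cut surfaces'' presupposes a connectivity-type statement of the same difficulty. Until this step is supplied --- for instance via the known connectivity of the arc complex of a non-orientable surface, or by an explicit intersection-reduction argument for one-sided curves --- the induction does not close.
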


\begin{crl}\label{crl::numberofquasiarcs}
The number of arcs in every quasi-triangulation of $\SM$ is an invariant of the marked surface $\SM$. 
\end{crl}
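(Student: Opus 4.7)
The plan is to derive this corollary as an immediate consequence of the two results that directly precede it, namely Proposition \ref{prop:mutation} and Proposition \ref{prop::flipgraph}. The key observation is that quasi-mutation is, by its very definition, cardinality-preserving on quasi-triangulations.

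First, I would unpack Proposition \ref{prop:mutation}: for any quasi-triangulation $T$ of $\SM$ and any arc $t \in T$, the quasi-mutation $\mu_t(T) = (T \setminus \{t\}) \sqcup \{t'\}$ is again a quasi-triangulation of $\SM$, where $t'$ is the unique arc distinct from $t$ producing another quasi-triangulation. This means a single quasi-mutation removes exactly one arc and adjoins exactly one arc, so $|\mu_t(T)| = |T|$. Iterating, the cardinality is preserved along any finite sequence of quasi-mutations.

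Next, I would appeal to Proposition \ref{prop::flipgraph}: given any two quasi-triangulations $T$ and $T'$ of $\SM$, the connectedness of the flip graph supplies a finite path $T = T_0, T_1, \dots, T_n = T'$ in which each $T_{i+1}$ is obtained from $T_i$ by a single quasi-mutation. Combining this with the previous step gives
\[
|T| = |T_0| = |T_1| = \cdots = |T_n| = |T'|,
\]
so the common cardinality depends only on $\SM$, not on the chosen quasi-triangulation.

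Since both inputs are already established in the excerpt, there is no genuine obstacle: the proof is a two-line application of the preceding proposition and corollary-style chaining. If one wanted to be extra careful, the only place to double-check is that the ``unique other arc'' $t'$ in Proposition \ref{prop:mutation} is always genuinely distinct from $t$ in every one of the four local configurations of Definition \ref{def::quasimutation}, which is visibly the case from Figures \ref{Fig::Mutation1}--\ref{Fig::Mutation4}.
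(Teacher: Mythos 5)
Your proof is correct and is exactly the argument the paper intends: the corollary is stated without an explicit proof precisely because it follows immediately from Proposition \ref{prop:mutation} (a flip exchanges one arc for one arc, preserving cardinality) together with Proposition \ref{prop::flipgraph} (connectedness of the flip graph). Nothing is missing.
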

\end{subsection}

Mimicking the number of arcs formula in the orientable case, we have an explicit formula for the number of arcs in a quasi-triangulation of a non-orientable marked surface \cite{FST08}. This result was alluded to in \cite{DP15}, but we provide the formula and an original proof.

\begin{proposition} \label{prop::numofarcsformula}
Let $\SM$ be a non-orientable surface and let $T$ be a quasi-triangulation of $\SM$ without quasi-arcs. Then the number of arcs in a triangulation is given by
$$n = 3g+3b+3p+c-6$$
where $g$ is the genus of $S$ i.e. the number of cross-caps used to construct $S$, $b$ is the number of boundary components of $S$, $p$ is the number of punctures of $S$ and $k$ is the number of marked points on the boundary of $S$. 
\end{proposition}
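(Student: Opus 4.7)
The plan is to prove the formula by a straightforward Euler characteristic computation, viewing the quasi-triangulation together with the boundary segments as a CW decomposition of $\S$. Since $T$ contains no quasi-arcs, every triangle appearing in $T$ must be a regular triangle: anti-self-folded triangles and quasi-triangles both require a quasi-arc as one of their sides (cf.\ Figure \ref{Fig::typesoftriangles} and Figure \ref{Fig::surfacelocalconfigs}), so in their absence every $2$-cell is a genuine triangle with three distinct $1$-cells as edges.

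First, I would set up the counts. The $0$-cells are exactly the marked points, which split into $p$ interior punctures and $c$ points on $\partial\S$, giving $V = p + c$. The $1$-cells are the $n$ internal arcs of $T$ together with the boundary segments; since each boundary component with $c_i$ marked points contributes $c_i$ boundary segments and $\sum c_i = c$, we get $E = n + c$. For the $2$-cells, I would double-count incidences: each regular triangle has three distinct edges, each internal arc borders two triangles, and each boundary segment borders exactly one triangle. Hence $3F = 2n + c$, giving $F = (2n+c)/3$.

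Next, I would compute the Euler characteristic of $\S$. Proposition \ref{prop::eulerchar} gives $\chi = 2-g$ for a closed non-orientable surface of genus $g$. Since $\S$ is obtained from the closed surface by removing $b$ open disks (one for each boundary component), and removing an open disk decreases $\chi$ by $1$, we have $\chi(\S) = 2 - g - b$. Substituting into $V - E + F = \chi(\S)$ yields
\[
(p+c) - (n+c) + \frac{2n+c}{3} \;=\; 2 - g - b,
\]
which simplifies to $p + (c-n)/3 = 2 - g - b$, and rearranging gives $n = 3g + 3b + 3p + c - 6$, as claimed.

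The main subtlety — and the only step that is not a bookkeeping exercise — is the justification that no face of $T$ has two edges identified, so that the face-count $3F = 2n + c$ really holds. This is where the hypothesis that $T$ has no quasi-arcs is used: by Lemma \ref{lem::typearc} the only triangle types available in a quasi-triangulation are regular triangles, anti-self-folded triangles, and quasi-triangles, and inspection of the local configurations shows that the latter two always involve a quasi-arc (a type \eqref{type4} arc). A secondary, minor, point is extending Proposition \ref{prop::eulerchar} from the closed case to the bordered case by the standard "remove $b$ disks" argument; once that is in hand, the remainder is pure arithmetic.
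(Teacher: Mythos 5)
Your overall strategy is the same as the paper's: build the cell decomposition from $T$ together with the boundary segments, count $V=p+c$, $E=n+c$, $3F=2n+c$, use $\chi(\S)=2-g-b$, and solve for $n$. The arithmetic is correct. Where you diverge from the paper is in handling punctures, and that is where your argument has a gap. You justify the face count $3F=2n+c$ by arguing that, in the absence of quasi-arcs, Lemma \ref{lem::typearc} forces every face to be a regular triangle with three distinct edges. But Lemma \ref{lem::typearc} and its trichotomy (regular / anti-self-folded / quasi-triangle) are stated under the paper's standing assumption that all marked points lie on the boundary; once $p>0$, a triangulation can contain an ordinary self-folded triangle around a puncture, which has two of its three sides identified and involves no quasi-arc whatsoever. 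So the assertion ``no face of $T$ has two edges identified'' is precisely the assertion that can fail in the punctured case --- the case your direct computation is meant to cover. The paper sidesteps this by first proving the formula for $p=0$ (where it explicitly notes there are no self-folded triangles) and then inducting on $p$, observing that triangulating around a newly added puncture requires exactly three new arcs, so $n$ increases by $3$ per puncture.

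The gap is repairable without the induction: if you count edge--face incidences with multiplicity (every triangle contributes three sides, a self-folded triangle contributing its enclosed arc twice; every internal arc lies on two faces counted with multiplicity and every boundary segment on one), then $3F=2n+c$ holds even when self-folded triangles are present, and the rest of your computation goes through verbatim. But as written, the justification you give for the key identity does not cover the punctured case, so you should either add the multiplicity argument or adopt the paper's induction on $p$.
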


\begin{proof}
In order to prove our formula, we use facts about the Euler characteristic of surfaces. For any surface $X$, we have that the Euler characteristic is given by

$$\chi(X) = f-e+v$$
where $f$ is the number of faces, $e$ is the number of edges and $t$ is the number of vertices in a triangulation of $X$. Using Proposition \ref{prop::eulerchar}, when $X$ is a non-orientable surface represented as a disk with $g$ cross-caps, the Euler characteristic is given by 
$$\chi(X) = 2-g.$$

Suppose our marked surface $\SM$ has $b$ boundary components, then the Euler characteristic is given by $\chi(S) = 2-g-b$ as creating one boundary component reduces the number of faces by 1.  

Consider the set of marked points $M$ of $S$ and let $c$ be the number of marked points on the boundary and $p$ be the number of interior marked points or punctures. Then the number of vertices in the quasi-triangulation $T$ of $\SM$ is given by $v= |M| = c+p$. Furthermore, if we let $n$ be the number of internal arcs in $T$, the number of edges $e$ in $T$ is the sum of the number of internal arcs and the number of boundary arcs i.e. $e = c+n$, as $c$ is the number of marked points on the boundary of $S$.  

Now, we need to compute the number of faces $f$. We proceed by induction of $p$ the number of punctures in $\SM$. When $p=0$, then there are no self-folded triangles and each triangle has three distinct sides. So each of the $n$ internal arcs is contained in exactly two triangles and each of the $c$ boundary segments lie in exactly one triangle. In this case, we have $3f=2n+c$.  

So, when $p=0$, we have that 

\begin{align*}
    \chi(S) &= f=e+v = \frac{2n+c}{3}- (c+n) + (c+p) \\
    &= \frac{2n+c}{3}-c-n+c+0\\
    &= \frac{2n+c}{3}-n
\end{align*}

Equating this to our other formula for the Euler characteristic based on the number of cross-caps, $g$, and the number of boundary components, $b$, we have 

$$2-g-b = \chi(S) = \frac{2n+c}{3}-n$$
Solving for $n$, we obtain our formula for the number of arcs: $n = 3g+3b+c-6$ when $p=0$.  

Suppose $p>0$ and let $x$ be a puncture we add to a triangulation $T$ of $\S$. To complete $T$ to a triangulation of the new punctured surface, we only need to add 3 arcs. Namely, since $x$ belongs to some triangulation $\Delta$ in $T$, we complete the triangulation by connecting $x$ to the three vertices of $\Delta$. This tells us that adding one puncture increases $n$ by 3 giving our formula $n = 3g+3b+3p+c-6$.
\end{proof}

By Proposition \ref{prop::flipgraph} and Corollary \ref{crl::numberofquasiarcs}, all quasi-triangulations are connected via a sequence of flips and the number of arcs in any quasi-triangulation is equal. This gives that Proposition \ref{prop::numofarcsformula} holds for any quasi-triangulation - namely, triangulations involving quasi-arcs. 

\subsection{Quiver Associated to an Orientable Surface}
Cluster algebras are studied in a more general frame using quivers, that we introduce in this section.

A \dfn{quiver} $Q=(Q_0, Q_1, s, t)$ is a quadruplet where: \begin{itemize}
    \item $Q_0$ is a set of \dfn{vertices} which contains a subset $F$ of \dfn{frozen vertices};
    \item $Q_1$ is a set of \dfn{arrows};
    \item $s$ and $t$ are two maps that associate to each arrow its \dfn{source} and its \dfn{target}.
\end{itemize}

For simplicity's sake, for a quiver with $n+m$ vertices and $m$ frozen vertices, we denote $Q_0 \setminus F = \{ 1, 2, \dots, n \}$ and $F = \{ n+1, n+2, \dots, m\}$.
Moreover, we agree to box frozen vertices when we draw a quiver.

\begin{example} \label{Ex::quiver}
Consider $Q$ the quiver shown in Figure \ref{Fig::Quiver}. The set of vertices in $Q$ is $Q_0 = \{ 1, 2, 3, 4, 5, 6, 7, 8, 9 \}$, the set of its frozen vertices is $F = \{ 7, 8, 9 \}$ and the set of its arrows is $Q_1 = \{ \alpha_1, \alpha_2, \alpha_3, \alpha_4, \alpha_5, \alpha_6, \beta_1, \beta_2, \beta_3, 
\beta_4,
\beta_5,
\gamma_1,
\gamma_2,
\gamma_3,
\gamma_4\}$.
\begin{figure}[ht]
\centering
\begin{tikzpicture}[scale = 1]
    \def \n {6}
    \def \radius {2cm}
    \def \margin {5} 
    
    \foreach \s in {1,...,\n}
    { \node (\s) at ({-360/\n * (\s - 1) +90}:\radius) {$\s$}; }
    \draw [->] (1) -- node[below right] {$\alpha_2$ }(6);
    \draw [->] (1) -- node[below left] {$\beta_2$} (2);
    \draw [->] (5) -- node[left] {$\beta_4$} (6);
    \draw [->] (2) -- node[above] {$\beta_3$} (5);
    \draw [->] (3) -- node[right] {$\alpha_5$} (2);
    \draw [->] (5) -- node[above] {$\g_3$} (3);
    \draw [->] (4) -- node[below left] {$\g_2$} (5);
    \draw [->] (6) -- node[near start, above right] {$\alpha_3$}  (4);
    \draw [->] (4) -- node[near start, above left] {$\alpha_4$} (3);
    \foreach \s in {7,8,9}
    { \node (\s) at ({-360*2/\n * (\s - 1) +30}:1.75*\radius) {$\boxed{\s}$}; }
    \draw [->] (9) -- node[above] {$\beta_1$} (1);
    \draw [->] (6) -- node[below] {$\beta_5$} (9);
    \draw [->] (7) -- node[above] {$\alpha_1$} (1);
    \draw [->] (2) -- node[below] {$\alpha_6$} (7);
    \draw [->] (8) -- node[left] {$\g_1$} (4);
    \draw [->] (3) -- node[right] {$\g_4$} (8);
\end{tikzpicture}
\caption{Quiver}
\label{Fig::Quiver}
\end{figure}

\end{example}

Let $Q$ be a quiver without loop nor $2$-cycle and let $k$ be a non-frozen vertex of $Q$. The \dfn{mutation} of $Q$ in the direction of $k$ transforms $Q$ into a new quiver $\mu_k(Q)$ by performing the following steps: \begin{enumerate}
    \item \label{step1} for all path $i \xrightarrow{} k \xrightarrow{} j$, add an arrow $i \rightarrow j$;
    \item inverse all arrows incident to $k$;
    \item remove one by one all $2$-cycles created at step \ref{step1}.
\end{enumerate}

Note that mutation is an involutive process i.e. $\mu_k^2(Q) = Q$, \cite{FZ02}.

A \dfn{path} in quiver is a sequence $(\alpha_1, \alpha_2, \dots, \alpha_m )$ of arrows in $Q$ such that $t(\alpha_k) = s(\alpha_{k+1})$ for all $k=1, 2, \dots, m-1$.
If either $(\alpha_1, \alpha_2, \dots, \alpha_m )$ or $(\alpha_m,  \dots, \alpha_2, \alpha_1 )$ is a path in $Q$, we denote this path both by $[\alpha_1 \alpha_2 \dots \alpha_m]$ and $[\alpha_m  \dots \alpha_2 \alpha_1 ]$. Similarly, if $p_1$ and $p_2$ are paths in $Q$ and if either $p_1p_2$ or $p_2p_1$ is also a path in $Q$, we denote this longer path by $[p_1p_2]$ or $[p_2p_1]$.

\begin{example}
Consider the paths $p_1 = \beta_1 \beta_2$ and $p_2 = \beta_3 \beta_4 \beta_5$. Then, $[p_1p_2] = \beta_1 \beta_2 \dots \beta_5 = [p_2p_1]$. Check that this is true in the case of Example \ref{Ex::quiver}.
\end{example}

It is possible to associate a quiver $Q_T$ to a triangulation $T$ by a process described by Fomin, Shapiro and Thurston, \cite{FST08}. To this end, fix an orientation to the surface $\S$. The internal arcs of the triangulation correspond to vertices in the quiver and there is an arrow from the vertex associated to an arc $\t$ to the vertex associated to an arc $\t'$ if $\t$ and $\t'$ share a common endpoint and at this endpoint, $\t'$ follows $\t$ according to the fixed orientation of the surface $\S$. In other words, each mark point in $\M$ determines a path in $Q_T$.

\begin{example}
The quiver in Figure \ref{Fig::Quiver} corresponds to the triangulation in Figure \ref{Fig::Triangulation}. The paths in $\alpha_1 \alpha_2 \dots \alpha_6$, $\beta_1 \beta_2 \dots \beta_5$ and $\g_1 \g_2 \g_3 \g_4$ are given by each of the three marked points in $\M$.
\end{example}

\begin{remark}
In the definition of quivers associated to orientable surfaces, we rely on having a unique notion of clockwise and counterclockwise. In a non-orientable surface, we do not have such a notion which does not allow us to naturally extend the definition of a quiver associated to a surface, as described in this section.
\end{remark}

We can now build a cluster algebra from a quiver instead of a triangulation of an orientable surface. Indeed, cluster algebras arising from quivers were studied before those arising from triangulations and they form larger class, \cite{FZ02}.

In the context of quivers, a \dfn{seed} is a triplet $(\x, \y, Q)$ where: \begin{enumerate}
    \item $\x$ is a cluster of $n$ variables;
    \item $\y$ is a set of $m$ coefficients;
    \item $Q$ is a quiver containing $n+m$ vertices and $m$ of them are frozen, that is are in $F$.
\end{enumerate}

We associate bijectively variables of $\x$ with non-frozen vertices of $Q_0 \setminus F$ and coefficients of $\y$ with frozen variables in $F$, the set of frozen vertices.

Just as flipping arcs in seed from a triangulation creates a new triangulation with a new cluster variable, mutating a quiver in a direction of a non-frozen vertex creates a new quiver with a new cluster variable.


\section{Quivers Arising from Non-Orientable Surfaces} \label{section::ourquiver}

In this section, we detail the construction of a quasi-cluster algebra from a quiver instead of a surface. We first describe how to associate what we call a partitioned quiver to a triangulation of a non-orientable unpunctured surface. After this, we define mutation on the level of our partitioned quiver and show that it is compatible with mutation on the level of the surface. As a consequence, we have that our partitioned quiver defines a quasi-cluster algebra. 

\subsection{Partitioned Quivers} \label{subsection::quiversqca}

\begin{definition}
A \dfn{partitioned} quiver is a pair $(Q,P)$ where $Q$ is a quiver whose set of arrows $Q_1$ admits a partition $P$ such that each subset of arrows in this partition is a path in $Q$; theses paths are called \dfn{$P$-paths}. 
\end{definition}

Let $(Q,P)$ be a partitioned quiver with $P$ and let $\alpha = \alpha_1 \alpha_2 \dots \alpha_a$ and $\beta = \beta_1 \beta_2 \dots \beta_b$ be paths in $Q$. 
If there is a path $p$ in $P$ such that $[\alpha \beta]$ is a subpath of $p$, then the paths $\alpha$ and $\beta$ are \dfn{$P$-concatenable} or, more precisely, \dfn{$p$-concatenable}.  

\begin{figure}[h]
 \begin{tikzcd}
    &9 \arrow[dd] & 1\arrow[drr,red]& 2 \arrow[l,red]\arrow[dr] &\\
    8\arrow[ur,blue] & &&&3\arrow[d]\arrow[ddll,red]\\
    &7 \arrow[ul]\arrow[dr] &&& 4\arrow[dl]\arrow[uul]\\
    &&6\arrow[r,red] \arrow[rru] &5
    \arrow[uuu,red]\arrow[ull]&\\
    \end{tikzcd}
    \caption{Partitioned quiver where the parts of the partition are indicated by color.}
    \label{fig::recoveringsurfacequiver}
    \end{figure}
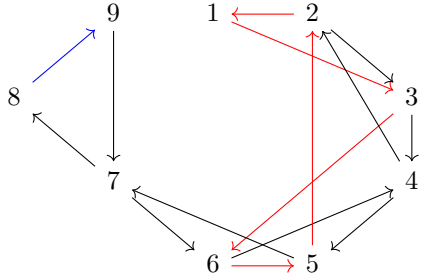

\begin{example}
Consider the partitioned quiver in Figure \ref{fig::recoveringsurfacequiver}. Then the path formed by the blue and black paths, say $P$, is a path itself. Hence the blue path and the black path are $P$- concatenable. 
\end{example}

We now construct a frozen quiver $Q_T$ from a triangulation $T$ of marked surface $\SM$. Even though the surface may be non-orientable, each boundary component corresponds to a removed disk i.e. is homeomorphic to $S^1$. Therefore, each boundary component is orientable; hence, we can fix an orientation to each of the boundary component. The arcs in $T$ are in bijection with the vertices of $Q_T$ and the boundary arc in $T$ are in bijection with the frozen vertices in $Q_T$.  
  
Suppose that $\t_1$ and $\t_2$ are two arcs cutting a common triangle $\Delta$ in $T$. Denote their corresponding vertices in $Q$ by $1$ and $2$ respectively. Consider their common endpoint, the marked point $i$. Suppose moreover that $\t_1$ precedes $\t_2$ according to the orientation of the boundary component on which lays $i$, see local picture at the marked point $i$ in Figure \ref{Fig::quiversetup}.  
 
\begin{figure}
\centering
\begin{tikzpicture}
    \tikzstyle{p}=[circle,fill, scale=0.3]
    \node[rotate=-45] [p] (1) at (0, 0) {$t$};
    
    \draw[rotate=-45] (-2,1) .. controls (-1,0) .. (0,0);
    \draw[rotate=-45] (0,0) .. controls (1,0) .. (2,1);
    \draw[rotate=-45] (1) -- node [midway,above] {$\t_1$} (1.5,1.5);
    \draw[rotate=-45] (1) -- node [midway,left] {$\t_2$} (-1.5, 1.5);
    \node[] at (1,1) {$\Delta$};
    \draw[rotate=-45] [latex-](-0.3,-0.3) -- (0.3,-0.3);
\end{tikzpicture}
\caption{The local configuration of the arcs $\t_1, \t_2$ in a regular triangle $\Delta$.} 
\label{Fig::quiversetup}
\end{figure}
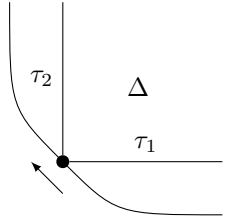

If $\Delta$ is a regular triangle, then there is an arrow from $1$ to $2$:
\[ \begin{tikzcd}
1 \arrow[r] & 2 
\end{tikzcd}\]

If $\Delta$ is an anti-self folded triangle containing the quasi-arc $\t_3$, then, $\t_1 = \t_2$ and we identify the vertices $1 = 2$. The local configuration for this case can be seen in Figure \ref{Fig::quiversetupquasi}. In that case, there are arrows from $1$ to $3$, from $3$ to itself and from $3$ to $2$, where $3$ is the vertex in $Q$ associated with $\t_3$:
\[ \begin{tikzcd}
1 \arrow[r, shift left] & 3 \arrow[l, shift left]
\end{tikzcd}\]

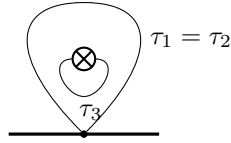
\begin{figure}
\centering

\begin{tikzpicture}
    \draw[very thick] (-1,-1) -- (1,-1);
    \tikzstyle{p}=[circle,fill, scale=0.3]
    \node[p] (1) at (0, -1) {};
    \draw[thick] (0, 0) circle (0.15cm);
    \draw[rotate=45, thick] (0,-0.15) -- (0,0.15);
    \draw[rotate=45, thick] (-0.15,0) -- (0.15, 0);
    \draw (1) .. controls (-1,0) and (-1,0.75) ..  (0, 0.75);
    \draw (1) .. controls (1,0) and (1,0.75) .. node [right] {$\t_1=\t_2$} (0, 0.75);
    \draw[rotate=-90] (0,-0.15) .. controls (0,-0.5) and (0.5,-0.25) .. (0.5,0);
    \draw[rotate=-90] (0,0.15) .. controls (0,0.5) and (0.5,0.25) .. node [very near end, below] {$\t_3$}(0.5,0);
    \end{tikzpicture}
\caption{The local configuration around a quasi-arc.} 
\label{Fig::quiversetupquasi}
\end{figure}

 \begin{figure}[h]
 \centering
 \begin{tikzpicture} [scale = 0.75]
    \draw[very thick] (0,0) circle (3cm) node[right] at (-3,0){$\t_6$};
    \draw[very thick] (0, 1.25) circle (0.75cm) node[left] at (0.75,1.25){$\t_7$};
    
    \draw[very thick] (0, -1.25) circle (0.25cm);
    \draw[shift={(0,-1.25)}, rotate=45] (0,-0.25) -- (0,0.25);
    \draw[shift={(0,-1.25)}, rotate=45] (-0.25,0) -- (0.25, 0);
    
    \tikzstyle{p}=[circle,fill, scale=0.3]
    \node[p] (1) at (0,0.5) {};
    \node[p] (2) at (0, 3) {};
    
    \draw (1) .. controls (-2, 0.5) and (-1.25, 2) .. node[near start, below] {$\t_1$} (2);
    
    \draw (1) .. controls (2,0.5) and (1.25,2) .. node[near start, below]{$\t_2$} (2);
    
    \draw (1) .. controls (-2,0) and (-2,-2.75) .. (0,-2.75);
    \draw (0,-2.75) .. controls (2,-2.75) and (3.25,0.5) .. node[midway, right]{$\t_3$}(2);
    
    \draw (1) .. controls (-3,-3) and (3,-3) .. node[midway, below] {$\t_4$}(1);
    
    \draw (0.25,-1.25) ..  controls (0.5,-1.25) and (0.45,-0.75) .. node[very near end, above]{$\t_5$} (0,-0.7);
    \draw (0,-0.7) .. controls (-0.45,-0.75) and (-0.5,-1.25)  .. (-0.25,-1.25); 
 \end{tikzpicture}
 \caption{An example of a quasi-triangulation of the annulus with one crosscap.}
 \label{Fig::Triangulation7arcs}
 \end{figure}
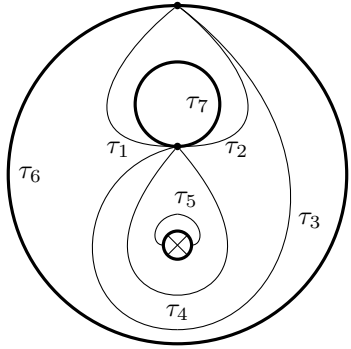
 
 \begin{example} \label{example:partitionedquiver}
In Figure \ref{Fig::Triangulation7arcs}, we have a quasi-triangulation of the annulus with one cross-cap and two marked points. We produce the quiver associated to this triangulation which is pictured in Figure \ref{Fig::quiver7arcs}. The set of vertices in $Q$ is $Q_0 = \{ 1, 2, 3, 4, 5, 6, 7\}$, the set of its frozen vertices is $F = \{ 6, 7 \}$ and the set of its arrows is $Q_1 = \{ \alpha_1, \alpha_2, \alpha_3, \alpha_4, \beta_1, \beta_2, \beta_3, \beta_4, \beta_5, \beta_6, \beta_7 \}$. Note that to differentiate the frozen vertices coming from boundary edges, we put the vertices 6,7 in squares. The partition $P_T$ associated to this triangulation and quiver has two parts: one coming from the marked point on the outer boundary component containing the paths $6 \to 3 \to 2 \to 1 \to 6$ and one coming from the marked point on the inner boundary component containing the paths $7 \to 1 \to 3 \to 4 \to 5 \to 4 \to 2 \to 7$.
\end{example} 
 
\begin{figure}
\begin{center}
\begin{tikzcd}
 & \boxed{7}   \arrow[d l]  \\
1 \arrow[d] \arrow[d r]& 2 \arrow[l]  \arrow[u] & 4 \arrow[l] \arrow[d, shift left]\\
\boxed{6} \arrow[r]  & 3  \arrow[r u] \arrow[u]& 5 \arrow[u, shift left]
\end{tikzcd}
\end{center}
\caption{Quiver arising from the surface in Figure \ref{Fig::Triangulation7arcs}.}
\label{Fig::quiver7arcs}
\end{figure}
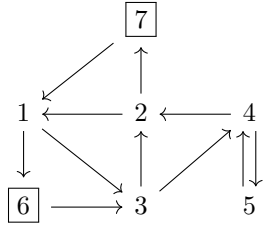

As alluded to in Example \ref{example:partitionedquiver}, the construction of $Q_T$ naturally gives a partition $P_T$ of the arrows of $Q_T$ based on the marked points of $(\S, \M)$. Indeed, each arrow of $Q_T$ is obtained by considering a marked point of $(\S, \M)$ and all arrows arising from the same marked point form a path following the orientation of the boundary component on which lay the marked points. In other words, each marked point gives rise to a path of arrows in $Q_T$ and each arrow is part of a single path; the set of these paths is denoted $P_T$ is there is a direct bijection between paths in $P_T$ and marked points.

Observe that fixing another orientation for a given boundary component will reverse the direction of all arrows given by marked points on this boundary component, but it will keep the partition the same. Thus, with our construction,  the obtained quiver $Q_T$ and set of paths $P_T$ are unique up to reversing the arrows forming paths associated with marked points all laying on the same boundary component.

\begin{remark} \label{rmk::Qlocal}
We can obtain $Q_T$ alternatively by constructing it locally around each internal arc $\t$, using Lemma \ref{lem::typearc}. Let $t$ be the vertex in $Q_T$ associated with $\tau$

\begin{enumerate} [i)] \setlength\itemsep{1em}
    \item If $\t_t$ is an arc of type \eqref{type1} as described in Lemma \ref{lem::typearc}, then $Q_T$ contains the following subgraph:
    \[ \begin{tikzcd}
     & j \arrow[dr, dash, "\delta", BurntOrange] \\
     i  \arrow[r, dash, "\alpha_1", NavyBlue] \arrow[dr, dash, "\g", swap, RubineRed] & \textcolor{red}{t} \arrow[u, dash, "\alpha_2", NavyBlue] \arrow[r, "\beta_1", dash, swap, OliveGreen] & k \\
     & l \arrow[u, "\beta_2", dash, swap, OliveGreen]
    \end{tikzcd} \]
    with  the vertices $i$, $j$, $k$ and $l$  are respectively associated with the arcs $\t_i$, $\t_j$, $\t_k$, $\t_k$ and $\t_l$ of Figure \ref{Fig::Mutation1}. 
    
    Here, $[\alpha_1 \alpha_2]$, $[\beta_1 \beta_2] \not\in I$ while $[\alpha_1 \gamma], [\alpha_2 \delta], [\beta_1 \delta], [\beta_2 \gamma] \in I$. Indeed, since $\alpha_1$ and $\gamma$ arise from two different endpoints of the arc $\t_i$, their composition is in the ideal $I$. The same argument applies to $\alpha_2$ and $\delta$, $\beta_1$ and $\gamma$, and $\beta_2$ and $\delta$ by considerating the endpoints of $\t_j$, $\t_k$ and $\t_l$ respectively.
    \item  If $\t_t$ is an arc of type \eqref{type2} as described in Lemma \ref{lem::typearc}, then $Q_T$ contains the following subquiver:
    \[ \begin{tikzcd}
    i  \arrow[r, shift left, "\alpha_1", NavyBlue] & t \arrow[l, shift left, "\alpha_3", NavyBlue] \arrow[loop right, "\alpha_2", NavyBlue]
    \end{tikzcd}\]
     with $[\alpha_1 \alpha_2 \alpha_3]$ a subpath of a path in $P_T$.
    Here the vertex $i$ is associated with the arc $\t_i$ of Figure \ref{Fig::Mutation2}. 
    \item  If $\t_t$ is an arc of type \eqref{type3} as described in Lemma \ref{lem::typearc}, then $Q_T$ contains the following subgraph:
    \[ \begin{tikzcd}
    & j \arrow[d, shift left, dash, "\alpha_2", NavyBlue]\\
    i  \arrow[r, dash, "\alpha_1", NavyBlue] \arrow[rr, dash, bend right, "\beta", swap, BurntOrange] & t \arrow[r, dash, "\alpha_4", NavyBlue] \arrow[u, shift left, dash, "\alpha_3", NavyBlue] & c
    \end{tikzcd}\]
    with $[\alpha_1 \alpha_2 \alpha_3 \alpha_4]$ a subpath of a path in $P_T$. Here, the vertices $i$, $j$, and $k$  are respectively associated with the arcs $\t_i$, $\t_j$, $\t_k$, $\t_k$ and $\t_l$ of Figure \ref{Fig::Mutation3}.
    \item If $\t_t$ is a quasi-arc enclosed by the arc $\t_i$, then $Q_T$ contains the following subquiver:
\[ \begin{tikzcd}
i  \arrow[r, shift left, "\alpha_1", NavyBlue] & t \arrow[l, shift left, "\alpha_2", NavyBlue]
\end{tikzcd}\]
with $[\alpha_1 \alpha_2]$ a subpath of a path in $P_T$.
Here $\t_t$ and  $\t_i$ are respectively associated with the vertices $t$ and $i$ of Figure \ref{Fig::Mutation4}.
\end{enumerate}

\end{remark}

Just as Lemma \ref{lem::typearc} allows us to sort arcs of a triangulation $T$ in four types depending on the local configuration of $T$, Remark \ref{rmk::Qlocal} allows us to sort vertices in four types depending of the local configuration of $Q_T$ - one type of vertices for each type of regular arc and one type of vertices for the quasi-arcs.


\subsection{Mutation of Partitioned Quivers} \label{subsection::mutquiver}

Before defining the mutation of partitioned quivers, it is essential to observe the uniqueness  of the local configuration of $Q_T$ around each vertices. In particular, we want to show that the arrows $\g$ and $\delta$ given in Remark \ref{rmk::Qlocal} for arcs of type \eqref{type1} are unique.

\begin{lemma}\label{lemm::arcsdeltagamma}
Let $T$ be a triangulation of a marked surface $(\S, \M)$ and let $(Q_T, P_T)$ be its partitioned quiver. 
Consider $\t_t$ be an arc of type \eqref{type1} and let $t$ be the associated vertex in $Q_T$. Then, there exist $2$-paths $[\alpha_1 \alpha_2]$ and $[\beta_1 \beta_2]$ through $t$ and a unique pair of arrows $\{\g, \delta \}$ such that: \begin{itemize}
    \item $[\alpha_1 \alpha_2], [\beta_1 \beta_2] \not\in I$,
    \item $\g$ and $\delta$ arc concatenable in $Q_T$ with both $[\alpha_1 \alpha_2]$ and $[\beta_1 \beta_2]$
    \item $[\alpha_1 \gamma], [\alpha_2 \delta], [\beta_1 \delta], [\beta_2 \gamma] \in I$.
\end{itemize}
\end{lemma}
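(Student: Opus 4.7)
The plan is to read off $\gamma$ and $\delta$ directly from the local configuration of $T$ around $\t_t$, as provided by Lemma \ref{lem::typearc} and Remark \ref{rmk::Qlocal}. Since $\t_t$ has type \eqref{type1}, it borders two distinct regular triangles $\Delta_1$ with sides $\t_i, \t_l, \t_t$ and $\Delta_2$ with sides $\t_j, \t_k, \t_t$; their union is the quadrilateral of Figure \ref{Fig::Mutation1}. Label its four corners by the marked points $p_{TR}$ and $p_{BL}$ (the endpoints of $\t_t$) together with $p_{TL}, p_{BR}$ (the corners of $\Delta_1, \Delta_2$ opposite to $\t_t$). At $p_{TR}$ the arcs $\t_i, \t_t, \t_j$ are consecutive in cyclic order, so they yield a length-two subpath $[\alpha_1 \alpha_2]$ of a $P$-path of $P_T$ passing through $t$; likewise $p_{BL}$ yields $[\beta_1 \beta_2]$. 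Both are subpaths of $P$-paths, hence neither is in $I$.

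For existence of the pair, I take $\gamma$ to be the arrow in $Q_T$ at the corner $p_{TL}$ between $\t_i$ and $\t_l$, and $\delta$ the arrow at $p_{BR}$ between $\t_j$ and $\t_k$. Concatenability in $Q_T$ is immediate from shared endpoint vertices: $\gamma$ is incident to $i$ (shared with $\alpha_1$) and to $l$ (shared with $\beta_2$), and symmetrically $\delta$ is incident to $j$ (shared with $\alpha_2$) and $k$ (shared with $\beta_1$). Each of the four compositions $[\alpha_1 \gamma], [\alpha_2 \delta], [\beta_1 \delta], [\beta_2 \gamma]$ pairs two arrows that share a vertex of $Q_T$ but arise from opposite endpoints of the same arc (one of $\t_i, \t_j, \t_k, \t_l$); they therefore belong to different $P$-paths of $P_T$, so each composition lies in $I$ by the definition of the ideal.

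For uniqueness, suppose $(\gamma', \delta')$ also satisfies the conditions. Concatenability of $\gamma'$ with $[\alpha_1 \alpha_2]$ and with $[\beta_1 \beta_2]$ forces $\gamma'$ to have one endpoint in $\{i, j\}$ and the other in $\{k, l\}$. The condition $[\alpha_1 \gamma'] \in I$ requires in particular that $\gamma'$ and $\alpha_1$ share a vertex of $Q_T$, which must be $i$; this rules out $\{j, k\}$ and $\{j, l\}$. Similarly $[\beta_2 \gamma'] \in I$ forces the other endpoint of $\gamma'$ to be $l$, ruling out $\{i, k\}$. Hence $\gamma'$ is an arrow between $i$ and $l$. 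These same two conditions further require $\gamma'$ to sit at the endpoint of $\t_i$ distinct from $\alpha_1$'s endpoint and simultaneously at the endpoint of $\t_l$ distinct from $\beta_2$'s endpoint; by the local picture both of these are $p_{TL}$. Since the paper assumes no punctures, $p_{TL}$ lies on $\partial \S$ and the arcs around it are linearly ordered, so at $p_{TL}$ the arcs $\t_i$ and $\t_l$ are strictly consecutive on the $\Delta_1$-side (no arc lies in the interior of $\Delta_1$), producing a unique arrow. Thus $\gamma' = \gamma$, and the symmetric argument with $(\alpha_2, \beta_1)$ in place of $(\alpha_1, \beta_2)$ yields $\delta' = \delta$.

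The main subtlety I anticipate is ensuring the argument survives global identifications (for instance when some of $\t_i, \t_j, \t_k, \t_l$ coincide or share extra endpoints elsewhere on the surface, so that some of the vertices $i, j, k, l$ become identified in $Q_T$ and some arrows become loops). Handling this cleanly amounts to observing that the uniqueness step depends only on the local cyclic order of half-arcs at the four corners of the quadrilateral, which is entirely determined by $\Delta_1$ and $\Delta_2$; together with the no-puncture hypothesis, this keeps the arrow between $i$ and $l$ at $p_{TL}$ (and between $j$ and $k$ at $p_{BR}$) unambiguous.
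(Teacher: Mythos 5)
Your proposal is correct and follows the same overall strategy the paper adopts: existence of $[\alpha_1\alpha_2]$, $[\beta_1\beta_2]$, $\g$, $\delta$ is read off from the local construction of $Q_T$ around a type \eqref{type1} arc (Remark \ref{rmk::Qlocal}), and uniqueness is attacked by positing a competing arrow $\delta'$ (or $\g'$). The difference is that the paper's own proof of uniqueness is left unfinished --- it introduces a second arrow $\delta'$ with the same endpoints and then breaks off --- whereas you actually carry the argument through: concatenability pins one endpoint of $\g'$ to $\{i,j\}$ and the other to $\{k,l\}$, the ideal conditions $[\alpha_1\g'],[\beta_2\g']\in I$ select $i$ and $l$ and force $\g'$ to arise from the corner of the quadrilateral opposite the two endpoints of $\t_t$, and the linear order of arcs at that boundary marked point (no punctures, no arc interior to the triangle) leaves a single such arrow. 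This is exactly the missing content, and your closing remark about global identifications is the right caveat to flag. Two small points to watch: the ideal $I$ is never formally defined in the paper, so you are (reasonably) taking the working definition implicit in Remark \ref{rmk::Qlocal}, namely that a composition lies in $I$ when its two arrows arise from different endpoints of a common arc; and the step ``$[\alpha_1\g']\in I$ requires $\g'$ and $\alpha_1$ to share the vertex $i$'' silently reads the condition as asserting a genuine nonzero path in the ideal rather than a vacuously zero composition --- worth stating explicitly, since otherwise that clause alone excludes nothing.
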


\begin{proof}
We know from the construction of the quiver $Q_T$ out of the triangulation $T$ that there exist such arrows $\alpha_1, \alpha_2, \beta_1, \beta_2, \delta, \gamma$, see Remark \ref{rmk::Qlocal} for more explanations.

Our aim is to prove the unicity for the listed properties of arrows $\delta$ and $\gamma$. Suppose there exists a second arrow $\delta'$ such that $\{ s(\delta'), t(\delta')\} = \{ s(\delta), t(\delta)\}$ and such that $[\alpha_2 \delta], [\beta_1 \delta] \in I$:
\[ \begin{tikzcd}
     & j \arrow[dr, dash, "\delta'", shift left] \arrow[dr, dash, "\delta", shift right, swap] \\
     i  \arrow[r, dash, "\alpha_1"] \arrow[dr, dash, "\g", swap, ] & t \arrow[u, dash, "\alpha_2"] \arrow[r, "\beta_1", dash, swap] & k \\
     & l \arrow[u, "\beta_2", dash, swap]
    \end{tikzcd} \]


We now prove unicity: suppose that there is another arrow $\delta'$ whose source and target are $x$ and $y$.
\end{proof}

Lemma \ref{lemm::arcsdeltagamma} allows us to define the mutation of partitioned quiver on any non-frozen vertices. To do so, we use the local configuration of $Q_T$ given in Remark \ref{rmk::Qlocal}. Let $t$ be a non-frozen vertex of $(Q_T,P)$. The mutation of the partitioned quiver $(Q_T,P)$ in direction $t$, transforms it into a new partitioned quiver $\left(\mu_t(Q_T), \mu_t(P)\right)$ as described below. 


\subsubsection{Mutation in Type \eqref{type1}} \label{subsubsection::type1mutationdefinition}
Suppose that the local configuration around $t$ is of type \eqref{type1} in Remark \ref{rmk::Qlocal}.
Denote by $p_a$, $p_b$, $p_c$ and $p_d$ the paths in the partition $P$ in which lies respectively $[\alpha_1 \alpha_2]$, $[\beta_1 \beta_2]$, $\gamma$ and $\beta$.

The mutated quiver $\mu_t(Q)$ is obtained from $Q$ by performing the following steps: \begin{enumerate}
	\item remove the arrows $\alpha_1$ and $\alpha_2$ and replace it by an arrow \[ s([\alpha_1 \alpha_2]) \xrightarrow{\alpha} t([\alpha_1 \alpha_2]); \]
    \item remove the arrows $\beta_1$, and $\beta_2$ and replace it by an arrow \[s\left([\beta_1 \beta_2]\right) \xrightarrow{\beta} t([\beta_1 \beta_2]);\]
    \item remove the arrow $\delta$ and replace it by a path of length two \[ [\delta_1 \delta_2] = s(\delta) \to t \to t(\delta);\]
    \item remove the arrow $\g$ and replace it by a path of length two \[ [\g_1 \g_2] = s(\g) \to t \to t(\g).\]
    

\end{enumerate}
The underlying graph of $Q_T$ will therefore change locally as follows:

\begin{figure}[h]
\[\begin{tikzcd}
     & j \arrow[dr, dash, "\delta", BurntOrange] \\
     i  \arrow[r, dash, "\alpha_1", NavyBlue] \arrow[dr, dash, "\g", swap, RubineRed] & \textcolor{red}{t} \arrow[u, dash, "\alpha_2", NavyBlue] \arrow[r, "\beta_1", dash, swap, OliveGreen] & k \\
     & d \arrow[u, "\beta_2", dash, swap, OliveGreen]
    \end{tikzcd}
\leftrightsquigarrow
\begin{tikzcd}
 & j \arrow[dl, dash, "\alpha", swap, NavyBlue] \\
 i \arrow[r, "\g_1", swap, , dash, RubineRed] & t \arrow[u, "\delta_2", swap, dash, BurntOrange]  \arrow[d, "\g_2", swap, dash, RubineRed] & k \arrow[l, "\delta_1", swap, dash, BurntOrange] \\
 & l \arrow[ur, dash, "\beta", swap, OliveGreen] 
\end{tikzcd}\]
\caption{ }
\label{fig::quivermutation1}
\end{figure}
Note that the orientation of arrows incident to $t$ may or may not be reversed by the mutation; it only depends on the orientation of boundary components on which lay initial and new endpoints of $\t_t$.

The mutated partition of $Q_1$ $\mu_t(P_T)$ is obtained from $P_T$ by performing the following steps:
\begin{enumerate}
    \item remove the subpath $[\alpha_1$ $\alpha_2]$ from $p_a$ and replace it by $\alpha$;
    \item remove the subpath $[\beta_1$ $\beta_2]$ from $p_b$ and replace it by $\beta$;
    \item remove the arrow $\gamma$ from $p_c$ and replace it by [$\g_1$ $\g_2$];
    \item remove the arrow $\delta$ from $p_d$ and replace it by [$\delta_1$ $\delta_2$].
\end{enumerate}


\subsubsection{Mutation in Type \eqref{type2}} \label{subsubsection::type2mutationdefinition}
Suppose that the local configuration around $t$ is of type \eqref{type2} in Remark \ref{rmk::Qlocal}. Denote by $p_a$ the path in the partition $P$ in which lies $[\alpha_1 \alpha_2 \alpha_3]$. The mutated quiver $\mu_t(Q)$ is obtained from $Q$ by removing the arrow $\alpha_3$.

The underlying graph of $Q_T$ will therefore change locally as follows:
\[ \label{quiverMutation2} \begin{tikzcd}
    i \arrow[r, shift left, "\alpha_1", NavyBlue] & t \arrow[l, shift left, "\alpha_3", NavyBlue] \arrow[loop right, "\alpha_2", NavyBlue]
    \end{tikzcd}
    \leftrightsquigarrow
     \begin{tikzcd}
    i \arrow[r, shift left, "\alpha_1", NavyBlue] & t \arrow[l, shift left, "\alpha_2", NavyBlue]
    \end{tikzcd}\]
    
The mutated partition of $Q_1$ $\mu_t(P_T)$ is obtained from $P_T$ by removing the arrow $\alpha_3$ from $p_a$.


\subsubsection{Mutation in Type \eqref{type3}}\label{subsubsection::type3mutationdefinition}
Suppose that the local configuration around $t$ is of type \eqref{type3} in Remark \ref{rmk::Qlocal}. Denote by $p_a$ and $p_b$ the paths in the partition $P$ in which lies respectively $[\alpha_1 \alpha_2 \alpha_3 \alpha_4]$ and $[\beta]$. The mutated quiver $\mu_t(Q)$ is obtained from $Q$ by performing the following steps: 

\begin{enumerate}
	\item remove the arrows $\alpha_1$, $\alpha_2$, $\alpha_3$ and $\alpha_4$ and replace it by an arrow \[ s([\alpha_1 \alpha_2 \alpha_3 \alpha_4]) \xrightarrow{\alpha} t([\alpha_1 \alpha_2 \alpha_3 \alpha_4]) \]
	\item remove the arrow $\beta$ and replace it by a length four path \[ [\beta_1 \beta_2 \beta_3 \beta_4] = s(\beta) \to t \to j \to t \to t(\beta) \]
\end{enumerate}

The underlying graph of $Q_T$ will therefore change locally as follows:

\[  \begin{tikzcd}\label{fig::quivermutation3}
    & j \arrow[d, shift left, dash, "\alpha_2", NavyBlue]\\
    i \arrow[r, dash, "\alpha_1", NavyBlue] \arrow[rr, dash, bend right, "\beta", swap, BurntOrange] & t \arrow[r, dash, "\alpha_4", NavyBlue] \arrow[u, shift left, dash, "\alpha_3", NavyBlue] & k
    \end{tikzcd}
    \leftrightsquigarrow
    \begin{tikzcd}
    & j \arrow[d, shift left, dash, "\beta_2", BurntOrange]\\
    i \arrow[r, dash, "\beta_1", BurntOrange] \arrow[rr, dash, bend right, "\alpha", swap, NavyBlue] & t \arrow[r, dash, "\beta_4", BurntOrange] \arrow[u, shift left, dash, "\beta_3", BurntOrange] & k
    \end{tikzcd}\]
    
The mutated partition of $Q_1$ $\mu_t(P_T)$ is obtained from $P_T$ by performing the following steps:
\begin{enumerate}
    \item remove the subpath $[\alpha_1 \alpha_2 \alpha_3 \alpha_4]$ from $p_a$ and replace it by $\alpha$;
    \item remove the arrow $\beta$ from $p_b$ and replace it by $[\beta_1 \beta_2 \beta_3 \beta_4]$.
\end{enumerate}


\subsubsection{Mutation in Type \eqref{type4}}\label{subsubsection::type4mutationdefinition}
Suppose that the local configuration around $t$ is of type \eqref{type4} in Remark \ref{rmk::Qlocal}. Denote by $p_a$ and $p_b$ the paths in the partition $p_a$ in which lies [$\alpha_1$ $\alpha_2$ $\alpha_3$ $\alpha_4$], and $p_b$ in which lies [$\beta$]. The mutated quiver $\mu_t(Q)$ is obtained from $Q$ by adding a loop $t \xrightarrow{\alpha_3} t$

The underlying graph of $Q_T$ will therefore change locally as follows:
\[ \begin{tikzcd}\label{fig::quivermutation4}
    i \arrow[r, shift left, "\alpha_1", NavyBlue] & t \arrow[l, shift left, "\alpha_2", NavyBlue]
    \end{tikzcd}
    \leftrightsquigarrow
    \begin{tikzcd}
    i \arrow[r, shift left, "\alpha_1", NavyBlue] & t \arrow[l, shift left, "\alpha_3", NavyBlue] \arrow[loop right, "\alpha_2", NavyBlue]
    \end{tikzcd}.
\]

The mutated partition of $Q_1$ $\mu_t(P_T)$ is obtained from $P_T$ by adding the arrow $\alpha_3$ to $p_a$ between $\alpha_1$ and $\alpha_2$.

\begin{example}
Consider the quiver in Figure \ref{Fig::quiver7arcs} given by the surface in Figure \ref{Fig::Triangulation7arcs}. Suppose we wish to mutate at $\gamma_2$. Without the additional data of the partition of the arrows, notice there are two different configurations of mutated quivers that are illustrated in Figure \ref{Fig::example_mutate}.
\end{example}

\begin{figure}
\begin{center}
\begin{tikzcd}
 & \boxed{7}   \arrow[d] \arrow[d l]  \\
1 \arrow[d] \arrow[r]& 2   \arrow[r] \arrow[d]   & 4 \arrow[bend right=30, ll] \arrow[d, shift left] \arrow[l u] \\
\boxed{6} \arrow[r]  & 3  \arrow[bend right=30, u u] \arrow[r u] & 5 \arrow[u, shift left]\\
\end{tikzcd}
\hspace{1cm}
\begin{tikzcd}
 & \boxed{7}   \arrow[d r]  \\
1 \arrow[r] \arrow[d r]& 2 \arrow[d]  \arrow[u] & 4 \arrow[l] \arrow[d, shift left]\\
\boxed{6} \arrow[u] \arrow[r] & 3  \arrow[l u, shift left] & 5 \arrow[u, shift left]
\end{tikzcd}
\end{center}
\caption{Two possible mutations of quivers that could be obtained from the surface in Figure \ref{Fig::Triangulation7arcs} if the data of the partition of the arrows is not given.}
\label{Fig::example_mutate}
\end{figure}
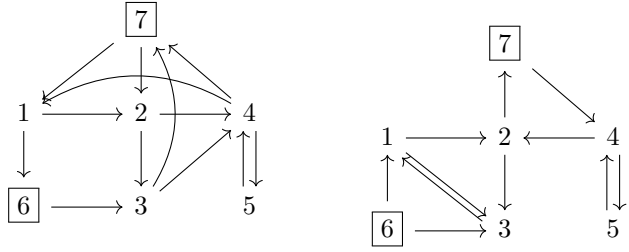

\subsection{Properties of Mutation of Partitioned Quivers} \label{subsec::mutquiver}

In this subsection, we explore properties of partitioned quiver mutation. 

Now, let's verify that, for any non-frozen vertex $t$ of a triangulation $T$ associated with the partitioned quiver $(Q_T, P)$, $\mu_t(P)$ is actually a partition of the arrows of $\mu_t(Q)$. Note that in each local configuration shown in Figure \ref{Fig::surfacelocalconfigs}, each arrow of $\mu_t(Q)$ belongs to a new part of the partition $\mu_t(P)$. Namely, by definition of mutation in each type, there is a unique assignment to each created arrow in $\mu_t(Q)$ to a part of $\mu_t(P)$, which gives that $\mu_t(P)$ is a partition on the set of arrows of $\mu_t(Q)$.

Another important characteristic of the mutation of quivers is that it is an involution in all types. 

\begin{theorem} \label{theorem::involution}
Let $T$ be a triangulation of a marked surface (orientable or not) and let $(Q_T,P_T)$ be the associated partitioned quiver. Consider $\t$ an arc in $T$ and $t$ the associated vertex in $Q_T$. Then,
\[ \mu_t^2(Q_T,P_T) = (Q_T,P_T).\]
\end{theorem}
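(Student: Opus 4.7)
The plan is to argue by case analysis on the type of the arc $\t$ associated to the vertex $t$, following the classification of Lemma~\ref{lem::typearc} and the local quiver pictures listed in Remark~\ref{rmk::Qlocal}. Since the mutation $\mu_t$ only modifies arrows and partition blocks incident to $t$, it suffices to check the involutivity locally for each of the four types. The first step is to track the type of the vertex under a single mutation: consistent with the remark that a flip exchanges arcs of type~\eqref{type2} with quasi-arcs, the expected pattern is that types~\eqref{type1} and~\eqref{type3} are each preserved by $\mu_t$, while types~\eqref{type2} and~\eqref{type4} are swapped with one another.

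The three cases \eqref{type2}$\leftrightarrow$\eqref{type4} and \eqref{type3} are essentially immediate by inspection of Sections~\ref{subsubsection::type2mutationdefinition}--\ref{subsubsection::type4mutationdefinition}. For types~\eqref{type2} and~\eqref{type4}, the rule in one direction deletes the arrow $\alpha_3$ and the rule in the other direction reinserts an arrow of the same name between $\alpha_1$ and $\alpha_2$; the corresponding edits on the partition block $p_a$ are visibly mutually inverse. For type~\eqref{type3}, the rule simply swaps the 4-subpath $[\alpha_1\alpha_2\alpha_3\alpha_4]$ with the single arrow $\beta$, together with their respective images $\alpha$ and $[\beta_1\beta_2\beta_3\beta_4]$, and a second application swaps them back.

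The substantive case is type~\eqref{type1}. Here $\mu_t$ replaces the two 2-subpaths $[\alpha_1\alpha_2]$, $[\beta_1\beta_2]$ by single arrows $\alpha$, $\beta$, and replaces the single arrows $\gamma$, $\delta$ by length-two paths $[\gamma_1\gamma_2]$, $[\delta_1\delta_2]$ through $t$. To reapply $\mu_t$ I would identify, in the mutated partitioned quiver, the unique arrows playing the roles of $\alpha_1,\alpha_2,\beta_1,\beta_2,\gamma,\delta$ prescribed by Lemma~\ref{lemm::arcsdeltagamma}, and verify that they are precisely $\gamma_1,\gamma_2,\delta_1,\delta_2,\alpha,\beta$, respectively. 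Granting this identification, the second mutation undoes the first on both the quiver and the partition, since each step of the rule in Section~\ref{subsubsection::type1mutationdefinition} is manifestly its own inverse, trading $[\alpha_1\alpha_2]\leftrightarrow\alpha$, $[\beta_1\beta_2]\leftrightarrow\beta$, $\gamma\leftrightarrow[\gamma_1\gamma_2]$ and $\delta\leftrightarrow[\delta_1\delta_2]$.

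The main obstacle is exactly this role-identification step in type~\eqref{type1}: it requires tracking, after the first mutation, the orientations of all arrows incident to $t$, the partition blocks into which each new arrow has been placed, and the concatenability conditions of Lemma~\ref{lemm::arcsdeltagamma} that single out the pair $\{\gamma,\delta\}$ among the arrows near $t$. Once this bookkeeping is set up, the verification reduces to matching six arrows to six roles and the involutivity is formal.
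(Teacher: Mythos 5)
Your proposal follows essentially the same route as the paper: a case analysis over the four local types of Remark \ref{rmk::Qlocal}, observing that types \eqref{type2} and \eqref{type4} are exchanged by mutually inverse loop-deletion/insertion, that type \eqref{type3} swaps the $4$-path with the single arrow and back, and that type \eqref{type1} reduces to identifying the new $2$-paths through $t$ as $[\g_1\g_2]$ and $[\delta_1\delta_2]$ and the new single arrows as $\alpha$ and $\beta$. The ``role-identification'' step you flag as the main obstacle is exactly what the paper resolves by appealing to the uniqueness statement of Lemma \ref{lemm::arcsdeltagamma} (asserting that the mutated quiver has unique $2$-subpaths through $t$ playing the roles of $[\alpha_1\alpha_2]$ and $[\beta_1\beta_2]$), so your argument is complete to the same degree as the paper's.
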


\begin{proof}
In order to show that mutation is an involution, we separate the proof into the four types of partitioned quiver mutation defined in Definition \ref{def::quasimutation}.  
  
If $\t_t$ is an arc of type \eqref{type1} as described in Lemma \ref{lem::typearc} i.e. $\t_t$ is the diagonal of some quadrilateral as in Figure \ref{Fig::surfacelocalconfigs}. By Subsection \ref{subsubsection::type1mutationdefinition}, the mutated partition $\mu_t(P_T)$ is obtained from $P_T$ by removing the subpath $[\alpha_1 \alpha_2]$ and $[\beta_1 \beta_2]$ from $p_a, p_b$ and replace it by $\alpha$ and $\beta$ respectively, and also removing the arrow $\gamma$, $\delta$ from $p_\gamma, p_\delta$ and replace it by $[\gamma_1 \gamma_2]$ and $[\delta_1 \delta_2]$. The resulting partitioned quiver $\mu_t(P_T)$ then has the unique 2-subpath $[\gamma_1 \gamma_2]$ and $[\delta_1 \delta_2]$. Hence we perform the same algorithm again, removing the subpath $[\gamma_1 \gamma_2]$ and $[\delta_1 \delta_2]$ in $p_\gamma$ and $p_\delta$ and replace it by $\gamma$ and $\delta$ respectively, and also removing the arrow $\alpha$, $\beta$ from $p_\gamma, p_\delta$ and replace it by $[\gamma_1 \gamma_2]$ and $[\delta_1 \delta_2]$ respectively, which results from the original quiver back again.  
  
Suppose $\t_t$ is an arc of type \eqref{type2} i.e. $\t_t$ is the loop based at a marked point enclosing a cross cap as in Figure \ref{Fig::surfacelocalconfigs}. Suppose that $[\alpha_1\alpha_2]$ is a 2-path in the local picture associated to $\t_t$ in $(Q_T,P_T)$.  Then $\mu_t(Q_T,P_T)$ contains that subpath $[\alpha_1\alpha_3\alpha_2]$ where $\alpha_3$ is the loop around vertex $t$. Then $\t_t$ becomes an arc of \eqref{type2} and $\mu_t^2(Q_T,P_T)$ yields the 2-path $[\alpha_1\alpha_2]$ as desired.

Suppose $\t_t$ is an arc of type \eqref{type3} i.e. $\t_t$ is a quasi-arc enclosed by the loop $\t_i$ as in Figure \ref{Fig::surfacelocalconfigs}. Suppose that $[\alpha_1\alpha_2\alpha_3\alpha_4]$ is the 4-path starting between vertex $i$ and $y$ in $p_a$. After applying $\mu_t$, this 4-path becomes one arrow $\alpha$ between vertex $i$ and $y$ in $p_a$. Applying $\mu_t$ to the arrow $\alpha$ in $\mu_t(Q)$ in $p_a$ replaces this arrow with a 4-path in $p_a$ as desired. Suppose $\beta$ in $Q$ in $p_b$ is the arrow between the vertices $i$ and $y$. After applying $\mu_t$, this arrow $\beta$ becomes a 4-path $[\beta_1\beta_2\beta_3\beta_4]$ in $p_b$. Applying $\mu_t$ to the 4-path in $p_b$ replaces $[\beta_1\beta_2\beta_3\beta_4]$ in $p_b$ by a direct arrow between $i$ and $y$ as desired.

If $\t_t$ is an arc of type \eqref{type4} i.e. $\t_t$ is a loop based at a marked point traversing through the crosscap as in Figure \ref{Fig::surfacelocalconfigs}. Suppose that $[\alpha_1 \alpha_2 \alpha_3 \alpha_4]$ is the 4-path starting at vertex $y$ and ending at $x$ in $p_a$. After applying $\mu_t$, this 4-path becomes $[\beta_4 \beta_3 \beta_2 \beta_1]$ starting at vertex $x$ and ending at vertex $y$ in switches parts of the partition to $p_b$. Applying $\mu_t$ to the 4-path $[\beta_4 \beta_3 \beta_2 \beta_1]$ in $\mu_t(Q)$ in $p_b$, we obtain the 4-path $[\alpha_1 \alpha_2 \alpha_3 \alpha_4]$ in $p_a$ that we started with.  Suppose $\beta$ is the arrow starting at $x$ and ending at $y$ in $p_b$. By definition of $\mu_t$, $\beta$ in $p_b$ becomes the arrow $\alpha$ from $y$ to $x$ and switches parts of the partition to $p_a$. Applying $\mu_t$ to $\alpha$ in $\mu_t(Q)$ in $p_a$, we obtain $\beta$ from $x$ to $y$ in $p_b$ that we started with.
\end{proof}

\begin{theorem}\label{theorem::classical}
The partitioned quiver $(Q_T,P_T)$ generalized the classical quiver arising from cluster algebras from surfaces. Moreover, partitioned quiver mutation generalizes quiver mutation of type \eqref{type1} coincides with classical quiver mutation.  
\end{theorem}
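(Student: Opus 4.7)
The plan is to verify two assertions: first, that when $\SM$ is orientable, the underlying quiver of $(Q_T, P_T)$ coincides with the classical quiver $Q_T^{\mathrm{cl}}$ of \cite{FST08} associated to $T$; and second, that in this setting type \eqref{type1} partitioned mutation coincides with classical Fomin--Zelevinsky quiver mutation at the chosen vertex.

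For the first assertion, I would compare the two constructions directly. In both definitions, vertices are in bijection with arcs of $T$ (with frozen vertices coming from boundary segments), and arrows are produced by the same local rule at each marked point: whenever two arcs $\t, \t'$ share a marked point $p$ and are consecutive in the cyclic order around $p$ with $\t$ preceding $\t'$ in the chosen orientation of the boundary component through $p$, one draws an arrow $\t \to \t'$. For an orientable surface the locally chosen boundary orientations used in Subsection \ref{subsection::quiversqca} can be taken to agree with a single global orientation of $\S$, so the two arrow sets agree. The partition $P_T$ is then simply extra bookkeeping that labels each arrow by the marked point from which it arises, and forgetting it recovers $Q_T^{\mathrm{cl}}$.

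For the second assertion, I would work inside the local type \eqref{type1} picture of Remark \ref{rmk::Qlocal}. After fixing orientations, one may assume $[\alpha_1 \alpha_2]$ is a 2-path $i \to t \to j$ and $[\beta_1 \beta_2]$ is a 2-path through $t$ with other endpoints $k, l$, together with direct arrows $\g$ between $i$ and $l$ and $\delta$ between $j$ and $k$ coming from the two corners of the quadrilateral that are not incident to $t$. Classical mutation at $t$ then: (1) adds the four shortcut arrows $i \to j$, $i \to l$, $k \to j$, $k \to l$ from 2-paths through $t$; (2) reverses the four arrows $\alpha_1, \alpha_2, \beta_1, \beta_2$ incident to $t$; and (3) cancels the two 2-cycles $\{\g,\ i \to l\}$ and $\{\delta,\ k \to j\}$. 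What remains is precisely the arrows $\alpha: i \to j$ and $\beta: k \to l$, together with the four reversed arrows at $t$; a direct comparison with Subsection \ref{subsubsection::type1mutationdefinition} identifies these with $\alpha$, $\beta$, $[\g_1 \g_2]$ and $[\delta_1 \delta_2]$.

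The only substantive point, and the step I expect to be the main obstacle, is verifying in (3) that the shortcut arrows actually cancel $\g$ and $\delta$ rather than giving pairs of parallel arrows. This is a routine but careful sign check: at the corner of the quadrilateral where $i$ and $l$ meet, the direction of $\g$ is forced by the cyclic order around that corner, and the orientations of $\alpha_1$ and of the arrow among $\beta_1, \beta_2$ ending at $l$ together pin the shortcut $i \to l$ produced by classical mutation to run opposite to $\g$. The same argument, with $j$ and $k$ in place of $i$ and $l$, handles $\delta$. Once this sign bookkeeping is in hand, the identification of partitioned type \eqref{type1} mutation with classical quiver mutation on the underlying quiver is immediate; combined with the first assertion, this proves the theorem.
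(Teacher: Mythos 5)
Your proposal is correct and follows essentially the same route as the paper: first matching the local configuration of the partitioned quiver around each vertex with the Fomin--Shapiro--Thurston quiver in the orientable case, then running classical mutation on the local type \eqref{type1} picture and comparing it arrow-by-arrow with the rule of Subsection \ref{subsubsection::type1mutationdefinition}. In fact your treatment is slightly more complete than the paper's, since you explicitly include the arrows $\gamma$ and $\delta$ from the two corners of the quadrilateral not incident to $t$ and verify the $2$-cycle cancellation that turns them into the paths $[\gamma_1\gamma_2]$ and $[\delta_1\delta_2]$, a step the paper's displayed local picture omits.
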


\begin{proof}
Let $\SM$ be an orientable marked surface without punctures with a fixed orientation and let $T$ be a triangulation of $\SM$. Let $Q$ be the quiver defined by \cite{FST08} associated to $T$ and let $Q'$ be the quiver defined in Section \ref{section::ourquiver} associated to the internal arcs of $T$. Note that the number of vertices in $Q$ and $Q'$ correspond to the number of arcs in $T$, so $|Q| = |Q'|$. Hence, in order to show that the definition of our partitioned quiver $Q'$ is the same as the quiver $Q$ defined by \cite{FST08}, i.e. $Q = Q'$, we show that each vertex $v \in Q$ has the same local configuration as $v' \in Q'$.

 As $S$ is orientable, every internal arc $\t$ of $T$ is a regular arc between two marked points in $M$. Moreover, $\t$ cuts two regular triangles, call them $\Delta_1, \Delta_2$. Let $t$ be the vertex associated to $\t$; by the construction in \cite{FST08}, there are two arrows incident to $t$ for each of $\Delta_1$, call them $\alpha_1, \alpha_2$ ,and $\Delta_2$, call them $\beta_1, \beta_2$. So, there are vertices $i,j,k,l \in Q$ such that $i \xrightarrow{\alpha_1}t\xrightarrow{\alpha_2} j $ and $l \xrightarrow{\beta_1} t \xrightarrow{\beta_2} k$. This is the local description of the vertex $t \in Q$.

On the other hand, we obtain the local configuration for $t \in Q'$ by scanning at marked points. As $S$ is orientable, the orientation fixed on the surface induces an orientation on the boundary of $S$. Let $q,r$ be the two marked points that are the endpoints of $\t$. Since $\t$ cuts two regular triangles, there exist arcs $\t_j, \t_i$ that emanate from $q$ that are legs of triangles $\Delta_1, \Delta_2$ respectively. By our construction of $Q'$, this gives a subpath $i \xrightarrow{\alpha_1} t \xrightarrow{\alpha_2} j $ in $Q'$. At the marked point $r$, there exists arcs $\t_l, \t_k$ that emanate from $m$ that are legs of triangles $\Delta_1, \Delta_2$ respectively. By our construction of $Q'$, this gives a subpath $l \xrightarrow{\beta_1} t \xrightarrow{\beta_2} k $ in $Q'$. Therefore, we see that the local configuration of each vertex coincides and thus, $Q = Q'$.

We now show that mutation of type \eqref{type1} is the same as classical quiver mutation. Recall that classical quiver mutation at vertex $t \in Q$ is given by the following three step process:

\begin{enumerate}
    \item Reverse all arrows incident to $t \in Q$.
    \item For any two-path $s \to t \to u$, add an arrow directly from $s \to u$.
    \item Delete any created 2-cycles.
\end{enumerate}

Using the local configuration of $t \in Q$, we have vertices $i,j,k,l \in Q$ such that $i \xrightarrow{\alpha_1} t \xrightarrow{\alpha_2} j $ and $l \xrightarrow{\beta_1} t \xrightarrow{\beta_2} k$. According to classical quiver mutation, we add arrows $i \to j$ and $l \to k$. In addition to this, we reverse the orientation of $\alpha_1, \alpha_2, \beta_1, \beta_2$ to obtain, $i \leftarrow t \leftarrow j $ and $l \leftarrow t \leftarrow k$. To better visualize this mutation,

\[ \label{fig::quivermutationclassical} \begin{tikzcd}
 & j \\
 i \arrow[r, "\alpha_1"]  & t \arrow[u, "\alpha_2"] \arrow[r, "\beta_1", swap] & k \\
 & l \arrow[u, "\beta_2", swap]
\end{tikzcd}
\rightsquigarrow
\begin{tikzcd}
 & j \arrow[d, swap]\\
 i \arrow[ur, swap] & t' \arrow[l, swap]  \arrow[d, swap,] & k \arrow[l, swap] \\
 & l \arrow[ur, swap] 
\end{tikzcd}.\]

Note that this is the same procedure specified in the definition of type \eqref{type1} in Section \ref{subsubsection::type1mutationdefinition}. Namely, we have the local configuration for mutation in Figure \ref{fig::quivermutation1} is the same as the figure above. Therefore, our type \eqref{type1} mutation generalizes classical quiver mutation.
\end{proof}

Another important characteristic of the mutation of quivers is its compatibility with flips on the associated triangulation.

\begin{theorem} \label{theorem::compatibilityofmutation}
Let $T$ be a triangulation of a marked surface (orientable or not) and let $(Q_T,P_T)$ be the associated partitioned quiver. Consider $\t$ an arc in $T$ and $t$ the associated vertex in $Q_T$. Then,
\[ \mu_t(Q,P) = \left(Q_{\mu_\t(T)}, P_{\mu_\t (T)}\right). \]
\end{theorem}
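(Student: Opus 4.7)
The plan is a case analysis on the type of the arc $\t$, following the four types of Lemma \ref{lem::typearc}. For each case the flip $\mu_\t(T)$ is a purely local move on the triangulation: outside a small disk neighborhood of the triangles incident to $\t$ the two triangulations $T$ and $\mu_\t(T)$ coincide. Since the partitioned quiver is built by scanning marked points and since marked points outside this neighborhood contribute identical subpaths to both $Q_T$ and $Q_{\mu_\t(T)}$, the comparison of $\mu_t(Q_T,P_T)$ with $(Q_{\mu_\t(T)}, P_{\mu_\t(T)})$ reduces to checking equality of the local configurations around $t$ together with the way the four ``boundary'' arrows of the local picture hook into the unchanged paths of $P_T$.

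First I would dispose of the orientability issue. Each boundary component is a circle, hence orientable, so both $P_T$ and $P_{\mu_\t(T)}$ are well-defined up to the same freedom of reversing the orientation of boundary components; thus we may fix a single orientation on each boundary component and use it on both sides of the equality. Next, for each of the four types I would: (1) draw the local configuration of $T$ near $\t$ (the pictures of Figure \ref{Fig::surfacelocalconfigs}); (2) flip $\t$ to obtain the local picture of $\mu_\t(T)$; (3) read off $Q_{\mu_\t(T)}$ and $P_{\mu_\t(T)}$ locally using Remark \ref{rmk::Qlocal}; and (4) compare with the output of $\mu_t$ prescribed in the corresponding subsubsection of Section \ref{subsection::mutquiver}.

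In Type (i), the flip turns the diagonal $\t$ of the quadrilateral $ijkl$ into the opposite diagonal, so the two regular triangles exchange which sides they share with $\t$. Reading off arrows from the four marked points of the quadrilateral gives exactly the arrows $\alpha, \beta, [\gamma_1\gamma_2], [\delta_1\delta_2]$ of Figure \ref{fig::quivermutation1}, and the path partition inherited from these four marked points reassembles into precisely $\mu_t(P_T)$ defined in Section \ref{subsubsection::type1mutationdefinition}; here the uniqueness guaranteed by Lemma \ref{lemm::arcsdeltagamma} is what ensures the comparison is unambiguous. In Type (ii), the flip turns the arc bounding the crosscap into the quasi-arc through the crosscap, so the new local picture is that of Remark \ref{rmk::Qlocal}(iv); one checks the loop $\alpha_2$ is the one that persists and the arrow $\alpha_3$ disappears, matching Section \ref{subsubsection::type2mutationdefinition}. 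Types (iii) and (iv) are dual to (ii) and to each other in the same way, noting that flipping a Type (iii) arc produces a Type (iii) arc (the roles of $\alpha$ and $\beta$ exchange) while flipping a Type (iv) quasi-arc produces a Type (ii) arc.

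The main obstacle, and the only nontrivial point, is the bookkeeping of the partition along the boundary of the local neighborhood: the arrows $\alpha_1,\alpha_2,\beta_1,\beta_2,\gamma,\delta$ in the local picture are each segments of paths in $P_T$ that extend out of the neighborhood through the marked points $p_a,p_b,p_c,p_d$ of the quadrilateral (and analogously in the other types). One must verify that after mutation the new arrows plug back into the same extending paths at the same endpoints, so that the four affected paths of $P_T$ become precisely the four affected paths of $P_{\mu_\t(T)}$ and the remaining paths are untouched. This is handled by observing that the endpoints of the arrows before and after mutation coincide with the same marked points of $(\S,\M)$; and since $P_T$ is defined by grouping arrows according to marked points, the reassembly is forced. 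Combining the four local verifications with the unchanged global complement yields the equality $\mu_t(Q_T,P_T)=(Q_{\mu_\t(T)},P_{\mu_\t(T)})$.
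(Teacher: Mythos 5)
Your proposal is correct and takes essentially the same approach as the paper: a case analysis over the four arc types of Lemma \ref{lem::typearc}, checking in each case that the locally defined quiver/partition mutation reproduces the partitioned quiver read off from the flipped triangulation. The only cosmetic difference is that the paper dispatches type (i) by invoking Theorem \ref{theorem::classical} together with the classical compatibility result from \cite{FZ07}, whereas you verify it directly via Lemma \ref{lemm::arcsdeltagamma}; your explicit remarks on the locality of the flip and on the forced reassembly of the partition outside the local neighborhood make precise a point the paper leaves implicit.
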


\begin{proof}
In order to prove the compatibility of partitioned quiver mutation and quasi-mutation on the level of the surface, we split the proof into cases depending on the type of mutation taking place. This is a direct consequence of the way in which we have defined our quiver mutation.

In the case of type \eqref{type1} mutation, note that this coincides with mutation of quivers in the classical case by Theorem \ref{theorem::classical} applying Proposition $4.8$ in \cite{FZ07}. Therefore, we only need to prove the statement holds in types \eqref{type2}, \eqref{type3} and \eqref{type4} mutations, which are mutations of quiver arising from quasi-cluster algebras.

Suppose that $\t$ is an arc of type \eqref{type2}. The type \eqref{type2} mutation rule for triangulations is shown in Figure \ref{Fig::Mutation2}. Then we associate a partitioned quiver $(Q_T,P_T)$ to the unmutated triangulation shown in Figure \ref{Fig::Mutation2}. After we perform mutation on $(Q_T,P_T)$, as defined in \ref{quiverMutation2} notice that the resulting quiver in section \ref{quiverMutation2} is associated with the mutated triangulation in Figure \ref{Fig::Mutation2}. Hence, in type \eqref{type2}, we have shown the compatibility of mutation.

If $\t$ is an arc of types \eqref{type3} or \eqref{type4}, the same argument as above applies looking at Figures \ref{Fig::Mutation3} and \ref{Fig::Mutation4} respectively and then verifying that the definitions in sections \ref{subsubsection::type3mutationdefinition} and \ref{subsubsection::type4mutationdefinition} respectively give the partitioned quivers associated with the mutated triangulations.
\end{proof}


\subsection{Exchange Relations} \label{subsec::exchangerlt}

In this subsection, mimicking quiver mutation from classical cluster algebras, we give a graphical interpretation of the mutation rules. Since we have four different types of mutation, we create a graphical exchange rule in each type. Assume that $T$ is a triangulation of a marked surface $(\S, \M)$, let $(Q_T, P_T)$ be its partitioned quiver and $\x \cup \y$ be the associated cluster, where $\x$ is the set of cluster variables and $\y$ is the set of coefficients.

We consider an arc $\t_t$ of $T$. Denote the variable in $\x$ associated to $t$ by $x_t$ and the vertex in $Q_T$ associated to $x_t$ by $t$. The flip of the arc $\t_t$ corresponds to the mutation in direction of $t$ of the seed $(\x \cup \y, Q_T, P_T)$. Here, we described the mutated set of cluster variables $\x'$ is given by $\x' = \left(\x \setminus x_t \right) \cup x_t'$ where $x_t'$ by different types of exchange relations. 

\subsubsection{Exchange Relation in Type \eqref{type1}}
Consider $\t_t$ to be an arc of type \eqref{type1}. Let $[\alpha_1 \alpha_2]$ and $[\beta_1 \beta_2]$ be $2$-paths through $t$ that are subpaths of $P_T$ and let $\{\g, \delta \}$ be the unique pair of arrows such that $\g$ and $\delta$ are concatenable in $Q_T$ with both $[\alpha_1 \alpha_2]$ and $[\beta_1 \beta_2]$, but not $P_T$-concatenable.
Moreover, let $i$ and $j$ be the endpoints of $[\alpha_1 \alpha_2]$ and $k$ and $l$ be the endpoints of of $[\beta_1 \beta_2]$.
Finally, suppose that $\delta$ is incident to both $j$ and $k$ and that $\g$ is incident to both $i$ and $l$. This situation is exactly the one illustrated by Remark \ref{rmk::Qlocal} \eqref{type1}.

Then, the mutated set of cluster variables $\x'$ is determined by the exchange relation:
\[x_t x_t'= x_i x_k + x_j x_l.\]
 
In other words, the product of the initial and the new quasi-cluster variables is obtained by taking the sum of product of the two pairs of variables associated with vertices that are both incident to $t$, but not incident one to the other.

\subsubsection{Exchange Relation in Type \eqref{type2}}
Consider $\t_t$ to be an arc of type \eqref{type2}. Let $i$ be the only vertex in $Q$ directly connected by arrow to $t$. This situation is exactly the one illustrated by Remark \ref{rmk::Qlocal} \eqref{type2}.

Then, the mutated set of cluster variables $\x'$ is determined by the exchange relation:
\[x_t x_t'= x_i.\]
In other words, the product of the initial and the new quasi-cluster variables is equal to the variable associated with the only vertex in $Q_T$ connected by a single arrow to $t$.
%


\subsubsection{Exchange Relation in Type \eqref{type3}}
Consider $\t_t$ to be an arc of type \eqref{type3}. Let $x$ be the only vertex in $Q$ directly connected by a $2$-cycle to $t$. Let $i$ and $k$ be the two other vertex connected by an arrow to $t$. This situation is exactly the one illustrated by Remark \ref{rmk::Qlocal} \eqref{type3}.

The graphical interpretation for type \ref{type4} mutation is more complicated because we have to take into account both the local configuration of the mutated vertex $t$ before and after the mutation. However, taking into account the data of the partition, we can think of the exchange relation as the product of all the edges adjacent to $t$ that have changed parts of the partition after mutation, plus the sum of the two edges that changes parts of the partition and direction after the mutation all squared.  

In Figure \ref{fig::quivermutation3}, given the local picture of the mutated vertex $t$ before and after mutation, the pairs of edges $\{\alpha_3, \beta_3\}$, $\{\alpha_2, \beta_2\}$, $\{\alpha_1, \beta_1\}$, and $\{\alpha_4, \beta_4\}$ are not in the same path, which constitutes the term $x_j^2x_ix_k$. Moreover, edges $\alpha_1$ and $\alpha_4$ will change direction due to rule (3) in \ref{subsubsection::type4mutationdefinition}, which constitutes the term $(x_i+x_k)^2$. 

Then, the mutated set of cluster variables $\x'$ is determined by the exchange relation:
\[x_t x_t'= (x_i + x_k)^2 + x_i x_j^2 x_k.\]

\subsubsection{Exchange Relation in Type \eqref{type4}}
Consider $\t_t$ to be an arc of type \eqref{type4}. Let $i$ be the only vertex in $Q$ directly connected by arrow to $t$. This situation is exactly the one illustrated by Remark \ref{rmk::Qlocal} \eqref{type4} and is really similar to the Type \eqref{type2} situation.

Then, the mutated set of cluster variables $\x'$ is determined by the exchange relation:
\[x_t x_t'= x_i.\]
In other words, the product of the initial and the new quasi-cluster variables is equal to the variable associated with the only vertex in $Q_T$ connected by a single arrow to $t$.

\subsubsection{Compatibility of Quasi-Cluster Variables}
With the construction of our partitioned quiver, we show that we obtain the same quasi-cluster algebra defined in terms of quasi-triangulations of surfaces. The key observation to obtain that statement is that, in each type of mutation of the quiver and in each exchange relation, we do not consider the direction of the arrows, but rather subpaths in $P_T$. Changing the orientation of a boundary components reverse the orientation of all paths in $P_T$ associated to marked points on this boundary components. Since the partition of the arrows of $Q_T$ in paths $P_T$ stays the same, that does not the exchange relations and therefore, the cluster variables obtained.


\begin{crl}\label{theorem::isomorphicalgebras}
The quasi-cluster variables obtained via the surface construction by \cite{DP15} as in Definition \ref{def::quasimutation} are in bijection with the quasi-cluster variables obtained by the partitioned quiver mutation. That is, the quasi cluster algebras obtained are isomorphic. 
\end{crl}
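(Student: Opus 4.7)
The plan is to build the bijection inductively via mutation sequences starting from the natural identification at the initial seed. At the initial seed, the quasi-cluster variables on the surface side are $\x = \{x_\t \mid \t \in T\}$ while on the quiver side they are indexed by the non-frozen vertices of $Q_T$; by construction of $Q_T$ from $T$, there is a canonical bijection $\t \leftrightarrow t$ between arcs and vertices, and so the initial clusters correspond. By Proposition~\ref{prop::flipgraph}, every quasi-cluster variable is produced by a finite sequence of flips $\mu_{\t_{i_k}} \circ \cdots \circ \mu_{\t_{i_1}}$ applied to the initial quasi-triangulation, and Theorem~\ref{theorem::compatibilityofmutation} guarantees that applying the corresponding sequence $\mu_{t_{i_k}} \circ \cdots \circ \mu_{t_{i_1}}$ to $(Q_T, P_T)$ yields the partitioned quiver attached to the flipped quasi-triangulation. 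Thus the underlying combinatorial structures evolve in lockstep, and it remains only to check that at each step the exchange relation on the quiver side produces the same new cluster variable as the exchange relation on the surface side.

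For the inductive step, I would handle the four cases of Lemma~\ref{lem::typearc} separately. Using the local descriptions of $(Q_T, P_T)$ in Remark~\ref{rmk::Qlocal}, one reads off the exchange relation from Section~\ref{subsec::exchangerlt} and compares it term by term with Definition~\ref{def::quasimutation}: for Type~\eqref{type1} both yield $x_t x_t' = x_i x_k + x_j x_l$ with $i,j,k,l$ being the sides of the quadrilateral containing $\t$; for Types~\eqref{type2} and~\eqref{type4} both yield $x_t x_t' = x_i$; and for Type~\eqref{type3} both yield $(x_j + x_k)^2 + x_i^2 x_j x_k$. Since each $x_t'$ is determined from $x_t$ and the exchange polynomial, the bijection $\t \leftrightarrow t$ is preserved under mutation. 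Taking the union over all mutation sequences gives a bijection between the full sets $\X$ on both sides, which extends to an isomorphism of the polynomial algebras they generate over $\mathbb{Z}\mathbb{P}$.

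One subtlety worth flagging is the choice of orientation on each boundary component used to define $(Q_T, P_T)$: reversing such a choice reverses the directions of all arrows coming from marked points on that boundary, but leaves the partition $P_T$ intact. Since every exchange relation in Section~\ref{subsec::exchangerlt} is phrased in terms of subpaths of $P_T$ and incident (undirected) neighbors of $t$, rather than directed $2$-cycles or arrow orientations, the cluster variables produced are independent of this choice, and the bijection is well defined.

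The main obstacle is not the statement itself—by design the four quiver-level exchange rules were tailored to reproduce the DP15 relations—but rather showing that the partition $P_T$ actually supplies enough data to pair the four surrounding vertices into the correct monomials $x_i x_k$ and $x_j x_l$ in the Type~\eqref{type1} exchange. This is exactly the content of Lemma~\ref{lemm::arcsdeltagamma}: the pair $\{\gamma, \delta\}$ that distinguishes "opposite sides" of the quadrilateral from "adjacent sides" is intrinsic to $(Q_T, P_T)$ and does not require access to the ambient surface. Once that uniqueness is invoked, the inductive step closes uniformly across all four types and the corollary follows.
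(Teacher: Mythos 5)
Your proposal is correct and follows essentially the same route as the paper: both arguments reduce the corollary to checking, type by type, that the exchange relations of Section \ref{subsec::exchangerlt} coincide with those of Definition \ref{def::quasimutation}. Your write-up is in fact more careful than the paper's own proof, which simply asserts this coincidence by definition, whereas you make explicit the inductive scaffolding via Proposition \ref{prop::flipgraph} and Theorem \ref{theorem::compatibilityofmutation}, the orientation-independence of the construction, and the role of Lemma \ref{lemm::arcsdeltagamma} in pairing opposite sides of the quadrilateral.
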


\begin{proof}
It suffices to show that the quasi-cluster variables obtained by partitioned quiver mutation are the same as the quasi-cluster variables obtained by \cite{DP15}. In other words, it suffices to show that the exchange relations defined in Section \ref{subsec::exchangerlt} are the same exchange relations that \cite{DP15} define. By definition of the exchange relations in Section \ref{subsec::exchangerlt}, we have that the quasi-cluster variables given in \cite{DP15} are the exact same as the quasi-cluster variables given our exchange relations. 
\end{proof}

\subsection{Other Quivers from Non-Orientable Surfaces}  

We present in this section different quivers arising from non-orientable surfaces. Firstly, we associate quiver directly from a triangulation of a non-orientable surface, but with a slightly different definition of compatibility of arcs, as used by Wilson, \cite{Wil18, Wil19, Wil20}. Secondly, we associate a quiver from a triangulation of a non-orientable surface via an orientable double cover of this surface.

\subsubsection{Wilson-Triangulations}

Our goal to adapt our definition of quivers from triangulations of non-orientable surfaces to the triangulations as defined by J. Wilson, \cite{Wil20}. Let's introduce the definition of arcs and compatibility according to this author. The purpose of these new definitions was to obtain only irreducible polynomials in the exchange relations.

An arc is \dfn{Wilson-admissible} if it is not a type \eqref{type3} arc, that is, it does not bound a Möbius strip with one point on the boundary. We write in short a \dfn{W-arc} for a Wilson admissible arc.
Two arcs $\t$ and $\t'$ in a triangulation are \dfn{Wilson-compatible} or \dfn{W-compatible} if they do not intersect or if one is a regular arc of type \eqref{type2} and the other is quasi-arc going through the same crosscap as the first one (see \autoref{subfig::warc3} and \autoref{subfig::warc4}).

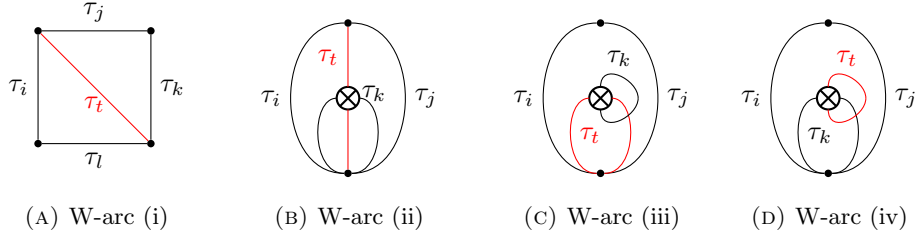
\begin{figure}[htb]
    \centering
    \begin{subfigure}[b]{0.23\textwidth}
        \centering
            \[ \begin{tikzpicture}[scale = 1.5]
             \tikzstyle{p}=[circle,fill, scale=0.3]
            \node[p] (1) at (0, 0) {};
            \node[p] (2) at (0, 1) {};
            \node[p] (3) at (1, 1) {};
            \node[p] (4) at (1, 0) {};
            \draw (1) -- node [left] {$\t_i$} (2);
            \draw (2) -- node [above] {$\t_j$} (3);
            \draw (3) -- node [right] {$\t_k$} (4);
            \draw (4) -- node[below] {$\t_l$} (1);
            \draw[red] (2) -- node [left, below] {$\t_t$} (4);
            \end{tikzpicture} \]
        \caption{W-arc \eqref{wtype1}}
        \label{subfig::warc1}
     \end{subfigure}
     \hfill
     \begin{subfigure}[b]{0.23\textwidth}
        \centering
            \[ \begin{tikzpicture}
            \tikzstyle{p}=[circle,fill, scale=0.3]
            \node[p] (1) at (0, -1) {};
            \node[p] (2) at (0, 1) {};
            \draw[thick] (0, 0) circle (0.15cm);
            \draw[rotate=45, thick] (0,-0.15) -- (0,0.15);
            \draw[rotate=45, thick] (-0.15,0) -- (0.15, 0);
            \draw (1) .. controls (-1,-1) and (-1,1) ..  node [left] {$\t_i$} (2);
            \draw (1) .. controls (1,-1) and (1,1) .. node [right] {$\t_j$} (2);
            \draw (1) .. controls (-0.5,-1) and (-0.5,0) .. (-0.15,0);
            \draw (1) .. controls (0.5,-1) and (0.5,0) .. node [above, near end] {$\t_k$} (0.15,0);
            \draw[red] (1) -- (0,-0.15);
            \draw[red] (0,0.15) -- node [left] {$\t_t$} (2);
            \end{tikzpicture}\]
        \caption{W-arc \eqref{wtype2}}
        \label{subfig::warc2}
     \end{subfigure}
     \hfill
    \begin{subfigure}[b]{0.23\textwidth}
        \centering
            \[ \begin{tikzpicture}
            \tikzstyle{p}=[circle,fill, scale=0.3]
            \node[p] (1) at (0, -1) {};
            \node[p] (2) at (0, 1) {};
            \draw[thick] (0, 0) circle (0.15cm);
            \draw[rotate=45, thick] (0,-0.15) -- (0,0.15);
            \draw[rotate=45, thick] (-0.15,0) -- (0.15, 0);
            \draw (1) .. controls (-1,-1) and (-1,1) ..  node [left] {$\t_i$} (2);
            \draw (1) .. controls (1,-1) and (1,1) .. node [right] {$\t_j$} (2);
            \draw[red] (1) .. controls (-0.5,-1) and (-0.5,0) .. node [right] {$\t_t$} (-0.15,0);
            \draw[red] (1) .. controls (0.5,-1) and (0.5,0) .. (0.15,0);
            \draw (0,-0.15) .. controls (0,-0.5) and (0.5,-0.25) .. (0.5,0);
            \draw (0,0.15) .. controls (0,0.5) and (0.5,0.25) .. node [above] {$\t_k$}(0.5,0);
            \end{tikzpicture}\]
        \caption{W-arc \eqref{wtype3}}
        \label{subfig::warc3}
     \end{subfigure}
     \begin{subfigure}[b]{0.23\textwidth}
        \centering
            \[ \begin{tikzpicture}
            \tikzstyle{p}=[circle,fill, scale=0.3]
            \node[p] (1) at (0, -1) {};
            \node[p] (2) at (0, 1) {};
            \draw[thick] (0, 0) circle (0.15cm);
            \draw[rotate=45, thick] (0,-0.15) -- (0,0.15);
            \draw[rotate=45, thick] (-0.15,0) -- (0.15, 0);
            \draw (1) .. controls (-1,-1) and (-1,1) ..  node [left] {$\t_i$} (2);
            \draw (1) .. controls (1,-1) and (1,1) .. node [right] {$\t_j$} (2);
            \draw (1) .. controls (-0.5,-1) and (-0.5,0) .. node [right] {$\t_k$} (-0.15,0);
            \draw (1) .. controls (0.5,-1) and (0.5,0) .. (0.15,0);
            \draw[red] (0,-0.15) .. controls (0,-0.5) and (0.5,-0.25) .. (0.5,0);
            \draw[red] (0,0.15) .. controls (0,0.5) and (0.5,0.25) .. node [above] {$\t_t$}(0.5,0);
            \end{tikzpicture}\]
        \caption{W-arc \eqref{wtype4}}
        \label{subfig::warc4}
     \end{subfigure}
    \caption{Four types of W-arcs}
    \label{fig::4typeswarcs}
\end{figure}

\begin{remark} \label{rmk::WQlocal}
Therefore, the W-arcs are split in four cases as well, but slightly different than the ones studied before; these four cases are illustrated at \autoref{fig::4typeswarcs}. An W-arc $\t$ is either: \begin{enumerate} [(i)]
    \item \label{wtype1} an arc cutting two different regular triangles such that $\t_i \neq \t_j$ and $\t_k \neq \t_l$ (see \autoref{subfig::warc1});
    
    \item \label{wtype2} an arc $\t_t$ cutting two different regular triangles sharing a common edge (see \autoref{subfig::warc2});
    
    \item \label{wtype3} an arc $\t_t$ cutting a quasi-arc (see \autoref{subfig::warc3});
    
    \item \label{wtype4} a quasi-arc cutting a type \eqref{type2} arc (see \autoref{subfig::warc4});.
\end{enumerate}
\end{remark}

We will refer to these four types of $W$-arcs as $W$-types \ref{wtype1}, \ref{wtype2}, \ref{wtype3} and \ref{wtype4}. A \dfn{Wilson-admissible triangulation} or shortly a \dfn{W-triangulation} is a maximal collection of Wilson-admissible triangles. In this context, we keep the property that in any W-triangulation, any W-arc can be uniquely flipped to obtain a new W-arc and a new W-triangulation. Figure \ref{Fig::W-arcs} shows the mutation of each W-type of arcs.
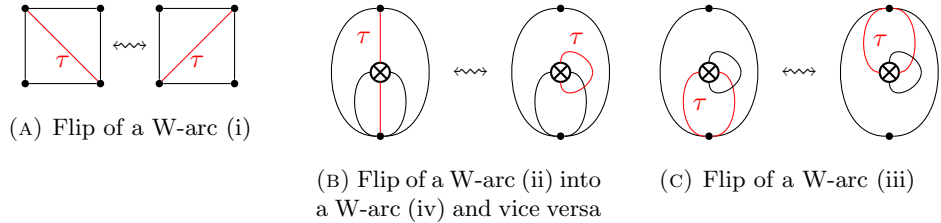
\begin{figure}
    \begin{subfigure}[t]{0.3\textwidth}
        \centering
            \[ \begin{tikzpicture}[scale = 1, baseline={([yshift=-.5ex]current bounding box.center)}]
             \tikzstyle{p}=[circle,fill, scale=0.3,  ]
            \node[p] (1) at (0, 0) {};
            \node[p] (2) at (0, 1) {};
            \node[p] (3) at (1, 1) {};
            \node[p] (4) at (1, 0) {};
            \draw (1) -- (2);
            \draw (2) -- (3);
            \draw (3) -- (4);
            \draw (4) -- (1);
            \draw[red] (2) -- node [left, below] {$\t$} (4);
            \end{tikzpicture}
            \leftrightsquigarrow
            \begin{tikzpicture}[scale = 1,  baseline={([yshift=-.5ex]current bounding box.center)}]
             \tikzstyle{p}=[circle,fill, scale=0.3]
            \node[p] (1) at (0, 0) {};
            \node[p] (2) at (0, 1) {};
            \node[p] (3) at (1, 1) {};
            \node[p] (4) at (1, 0) {};
            \draw (1) -- (2);
            \draw (2) -- (3);
            \draw (3) -- (4);
            \draw (4) -- (1);
            \draw[red] (1) -- node [right, below] {$\t$} (3);
            \end{tikzpicture}\]
        \caption{Flip of a W-arc  \eqref{wtype1}}
        \label{subfig::mutwarc1}
     \end{subfigure}
     \hfill
     \begin{subfigure}[t]{0.3\textwidth}
        \centering
            \[ \begin{tikzpicture}[baseline={([yshift=-.5ex]current bounding box.center)}, scale=0.85]
            \tikzstyle{p}=[circle,fill, scale=0.3]
            \node[p] (1) at (0, -1) {};
            \node[p] (2) at (0, 1) {};
            \draw[thick] (0, 0) circle (0.15cm);
            \draw[rotate=45, thick] (0,-0.15) -- (0,0.15);
            \draw[rotate=45, thick] (-0.15,0) -- (0.15, 0);
            \draw (1) .. controls (-1,-1) and (-1,1) ..  (2);
            \draw (1) .. controls (1,-1) and (1,1) ..  (2);
            \draw (1) .. controls (-0.5,-1) and (-0.5,0) .. (-0.15,0);
            \draw (1) .. controls (0.5,-1) and (0.5,0) .. (0.15,0);
            \draw[red] (1) -- (0,-0.15);
            \draw[red] (0,0.15) -- node [left] {$\t$} (2);
            \end{tikzpicture}
            \leftrightsquigarrow
            \begin{tikzpicture}[baseline={([yshift=-.5ex]current bounding box.center)}, scale=0.85]
            \tikzstyle{p}=[circle,fill, scale=0.3]
            \node[p] (1) at (0, -1) {};
            \node[p] (2) at (0, 1) {};
            \draw[thick] (0, 0) circle (0.15cm);
            \draw[rotate=45, thick] (0,-0.15) -- (0,0.15);
            \draw[rotate=45, thick] (-0.15,0) -- (0.15, 0);
            \draw (1) .. controls (-1,-1) and (-1,1) ..  (2);
            \draw (1) .. controls (1,-1) and (1,1) .. (2);
            \draw (1) .. controls (-0.5,-1) and (-0.5,0) .. (-0.15,0);
            \draw (1) .. controls (0.5,-1) and (0.5,0) .. (0.15,0);
            \draw[red] (0,-0.15) .. controls (0,-0.5) and (0.5,-0.25) .. (0.5,0);
            \draw[red] (0,0.15) .. controls (0,0.5) and (0.5,0.25) .. node [above] {$\t$}(0.5,0);
            \end{tikzpicture}\]
        \caption{Flip of a W-arc \eqref{wtype2} into a W-arc \eqref{wtype4} and vice versa}
        \label{subfig::mutationwarc24}
     \end{subfigure}
     \hfill
\begin{subfigure}[t]{0.3\textwidth}
        \centering
            \[ \begin{tikzpicture}[baseline={([yshift=-.5ex]current bounding box.center)}, scale=0.85]
            \tikzstyle{p}=[circle,fill, scale=0.3]
            \node[p] (1) at (0, -1) {};
            \node[p] (2) at (0, 1) {};
            \draw[thick] (0, 0) circle (0.15cm);
            \draw[rotate=45, thick] (0,-0.15) -- (0,0.15);
            \draw[rotate=45, thick] (-0.15,0) -- (0.15, 0);
            \draw (1) .. controls (-1,-1) and (-1,1) ..  (2);
            \draw (1) .. controls (1,-1) and (1,1) .. (2);
            \draw[red] (1) .. controls (-0.5,-1) and (-0.5,0) .. node [right] {$\t$} (-0.15,0);
            \draw[red] (1) .. controls (0.5,-1) and (0.5,0) .. (0.15,0);
            \draw (0,-0.15) .. controls (0,-0.5) and (0.5,-0.25) .. (0.5,0);
            \draw (0,0.15) .. controls (0,0.5) and (0.5,0.25) .. (0.5,0);
            \end{tikzpicture}
            \leftrightsquigarrow
            \begin{tikzpicture}[baseline={([yshift=-.5ex]current bounding box.center)}, scale=0.85]
            \tikzstyle{p}=[circle,fill, scale=0.3]
            \node[p] (1) at (0, -1) {};
            \node[p] (2) at (0, 1) {};
            \draw[thick] (0, 0) circle (0.15cm);
            \draw[rotate=45, thick] (0,-0.15) -- (0,0.15);
            \draw[rotate=45, thick] (-0.15,0) -- (0.15, 0);
            \draw (1) .. controls (-1,-1) and (-1,1) ..  (2);
            \draw (1) .. controls (1,-1) and (1,1) .. (2);
            \draw[red] (2) .. controls (-0.5,1) and (-0.5,0) .. node [right] {$\t$} (-0.15,0);
            \draw[red] (2) .. controls (0.5,1) and (0.5,0) .. (0.15,0);
            \draw (0,-0.15) .. controls (0,-0.5) and (0.5,-0.25) .. (0.5,0);
            \draw (0,0.15) .. controls (0,0.5) and (0.5,0.25) .. (0.5,0);
            \end{tikzpicture}\]
        \caption{Flip of a W-arc \eqref{wtype3}}
        \label{subfig::mutationwarc3}
     \end{subfigure}
     \hfill
    \caption{Mutation of W-arcs}
    \label{Fig::W-arcs}
\end{figure}

Besides, just as explained in Section \ref{section::ourquiver}, we can define a quiver $Q_T$ and a set of paths $P_T$ associated with a W-triangulation $T$. Once again, we fix an orientation for each of the boundary component. Consider two arcs $\t_1$ and $\t_2$ sharing a common endpoint $i$ in $\M$ such there is no other W-arc attached to $i$ between $\t_1$ and $\t_2$. Suppose moreover that $\t_2$ follows $\t_1$ according to the orientation of the boundary on which lies $i$. Either none of $\t_1$ and $\t_2$ intersects with another W-arc, either one of them does.
In both cases, there is an arrow $1 \rightarrow 2$ in $Q_T$. 
Moreover, if $\t_1$ intersect a W-arc $\t_3$, add an arrow $3 \rightarrow 1$.
Similarly, if $\t_2$ intersect a W-arc $\t_3$, add an arrow $2 \rightarrow 3$.

As with our previous construction, marked points in $M$ give rise to paths in $Q_T$; we denote the set of all these paths by $P_T$.

We can also obtain $Q_T$ by constructing it locally around a vertex $t$ associated to an arc $\t_t$ as follows.

\begin{enumerate}[(i)]
    \item The W-type \eqref{wtype1} is a special case of the ordinary type \eqref{type1} with $a \neq c$ and $b \neq d$. We therefore refer the reader to the Remark \ref{rmk::Qlocal} \eqref{type1}.
    
    \item If $\t_t$ is of W-type \eqref{wtype2}, then $Q_T$ contains the subgraph shown in \autoref{subfig::mutwvertex24} on left, with $[\alpha_1 \alpha_2 \alpha_3 \alpha_4]$ and $[\beta_1 \beta_2]$ subpaths in $P_T$. Here,$i$, $j$ and $k$ are respectively associated to the arcs $\t_i$, $\t_j$ and $\t_k$ from \autoref{subfig::warc2}. When we mutate the quiver at the vertex $t$, it changes locally as illustrated in \autoref{subfig::mutwarc1} on right. 
    
    Denote $p_b$ the path in $P_T$ in which lies $[\beta_1 \beta_2]$.
    The mutated partition of $Q_1$ $\mu_t(P_T)$ is obtained from $P_T$ by removing the subpath $[\beta_1 \beta_2]$ from $p_b$ and replace it by $\beta$ where $\beta$ is such that $s(\beta) = s([\beta_1 \beta_2])$ and $t(\beta) = t([\beta_1 \beta_2])$ respectively.
    
    \item  If $\t_t$ is of W-type \eqref{wtype3}, then $Q_T$ contains the subgraph shown in \autoref{subfig::mutwvertex3} on left, with $[\alpha_1 \alpha_2 \alpha_3 \alpha_4]$ and $[\beta_1 \beta_2]$ subpaths in $P_T$. Here,$i$, $j$ and $k$ are respectively associated to the arcs $\t_i$, $\t_j$ and $\t_k$ from \autoref{subfig::warc3}. When we mutate the quiver at the vertex $t$, it changes locally as illustrated in \autoref{subfig::mutwvertex3} on right.
    
    Denote $p_a$ and $p_b$ the paths in $P_T$ in which lies $[\alpha_1 \alpha_2 \alpha_3 \alpha_4]$ and $\beta$ respectively.
    The mutated partition of $Q_1$ $\mu_t(P_T)$ is obtained from $P_T$ by performing the following steps: \begin{enumerate}
        \item remove the arrows $\alpha_1$, $\alpha_2$, $\alpha_3$, $\alpha_4]$ and $\beta$; 
        \item add an arrow $s([\alpha_1 \alpha_2 \alpha_3 \alpha_4]) \xrightarrow{\alpha} \t([\alpha_1 \alpha_2 \alpha_3 \alpha_4])$
        \item add a subpath $[\beta_1 \beta_2 \beta_3 \beta_4]$ it by $[\beta_1 \beta_2 \beta_3 \beta_4]$.
    \end{enumerate}
    \[ \begin{tikzcd}
     i  \arrow[r, dash, "\alpha_1", swap] \arrow[rr, dash, "\beta", bend left] & t \arrow[r, dash, "\alpha_4", swap] \arrow[d, shift right, dash, swap, "\alpha_2"] \arrow[d, shift left, dash, "\alpha_3"]& j \\
     & c
    \end{tikzcd},\]
    with $[\alpha_1 \alpha_2 \alpha_3 \alpha_4]$ and $[\beta]$ subpaths in $P_T$. Here,$i$, $j$ and $k$ are respectively associated to the arcs $\t_i$, $\t_j$ and $\t_k$ from Remark \ref{rmk::WQlocal} \eqref{wtype3}.
    
    \item  If $\t_t$ is of W-type \eqref{wtype4}, then $Q_T$ contains the following subgraph:
    \[ \begin{tikzcd}
     i  \arrow[r, dash, "\alpha_1", swap] \arrow[rr, dash, "\beta", bend left] & k \arrow[r, dash, "\alpha_4", swap] \arrow[d, shift right, dash, swap, "\alpha_2"] \arrow[d, shift left, dash, "\alpha_3"]& j \\
     & t
    \end{tikzcd},\]
    with $[\alpha_1 \alpha_2 \alpha_3 \alpha_4]$ and $[\beta]$ subpaths in $P_T$. Here,$i$, $j$ and $k$ are respectively associated to the arcs $\t_i$, $\t_j$ and $\t_k$ from Remark \ref{rmk::WQlocal} \eqref{wtype3}.
    Observe that for this last local configuration, we did not uniquely consider the vertices adjacent to $t$, but also some vertices adjacent to them. We indeed need them to define the mutation of the quiver in this type of arc.
\end{enumerate}

\autoref{fig::wmutation} illustrated locally the mutation W-arcs of type \eqref{wtype2}, \eqref{wtype3} and \eqref{wtype4} (the mutation of a W-arc of type \eqref{wtype1} is defined exactly as the mutatuion of a W-arc of type \eqref{type1}).

\begin{figure}[h!]
    \centering
    \begin{subfigure}[b]{0.75\textwidth}
        \centering
           \[ \begin{tikzcd}
             i  \arrow[r, dash, "\alpha_1", NavyBlue] \arrow[dr, dash, "\beta_1", swap, BurntOrange] & k \arrow[r, dash, "\alpha_4", NavyBlue] \arrow[d, shift right, dash, swap, "\alpha_2", NavyBlue] \arrow[d, shift left, dash, "\alpha_3", NavyBlue]& j \arrow[dl, dash, "\beta_2", BurntOrange]  \\
             & \textcolor{red}{t}
            \end{tikzcd}
            \leftrightsquigarrow
            \begin{tikzcd}
             i  \arrow[r, dash, "\alpha_1", NavyBlue, swap] \arrow[rr,  "\beta", BurntOrange, bend left, dash] & k \arrow[r, dash, "\alpha_4", NavyBlue, swap] \arrow[d, shift right, dash, swap, "\alpha_2", NavyBlue] \arrow[d, shift left, dash, "\alpha_3", NavyBlue]& j   \\
             & \textcolor{red}{t}
          \end{tikzcd} \] 
        \caption{Vertex associated with a W-arc \eqref{wtype2} on left and vertex associated with a W-arc \eqref{wtype4} on right}
        \label{subfig::mutwvertex24}
     \end{subfigure}
     \hfill
     
     \begin{subfigure}[b]{1\textwidth}
        \centering
           \[ \begin{tikzcd}
             i  \arrow[r, dash, "\alpha_1", NavyBlue, swap] \arrow[rr,  "\beta", BurntOrange, bend left, dash] & \textcolor{red}{t} \arrow[r, dash, "\alpha_4", NavyBlue, swap] \arrow[d, shift right, dash, swap, "\alpha_2", NavyBlue] \arrow[d, shift left, dash, "\alpha_3", NavyBlue]& j   \\
             & c
            \end{tikzcd}
            \leftrightsquigarrow
            \begin{tikzcd}
             i  \arrow[r, dash, "\beta_1", BurntOrange, swap] \arrow[rr,  "\alpha", NavyBlue, bend left, dash] & \textcolor{red}{t} \arrow[r, dash, "\beta_4", BurntOrange, swap] \arrow[d, shift right, dash, swap, "\beta_2", BurntOrange] \arrow[d, shift left, dash, "\beta_3", BurntOrange]& j   \\
             & c
          \end{tikzcd} \] 
        \caption{Vertex associated with a W-arc \eqref{wtype3}}
        \label{subfig::mutwvertex3}
     \end{subfigure}
     \hfill
    \caption{Mutation of quivers from W-triangulation for vertex associated with W-arcs of type \eqref{wtype2}, \eqref{wtype3} and \eqref{wtype4}.}
    \label{fig::wmutation}
\end{figure}
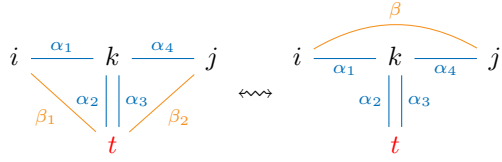
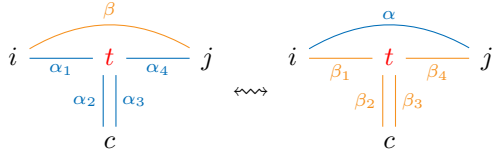


\subsubsection{Double Cover}

In \cite{DP15}, the authors lift certain quasi-triangulations to the associated surface's orientable double cover. They showed that the cluster algebra associated with the this orientable double cover, in the sense of \cite{FST08}, coincides with their definition of quasi-cluster algebra in most cases. Namely, this lift to the double cover completely coincides with the work of \cite{FST08} except in the case of where the quasi-triangulation contains quasi-arcs.

In this section, we will consider the case where our quasi-triangulations do not contain any quasi-arcs. From this, we will define a \emph{double quiver} associated to the double cover of our surface $\SM$. In order to this, we review some background information about the double cover, following Section 6 of \cite{DP15}.

Let $\SM$ be a non-orientable marked surface and let $T$ be a quasi-triangulation that does not contain a quasi-arc. Recall that $\SM$ admits a minimal orientable 2:1 cover called its double cover $(\mathbf{\Bar{S}, \Bar{M}})$. 
This surface $(\mathbf{\Bar{S}, \Bar{M}})$ can be endowed with a free $\mathbb{Z}_2$ action such that $(\mathbf{\Bar{S}, \Bar{M}})/\mathbb{Z}_2 \simeq \SM$. 
Say $\mathbb{Z}_2 = \{1,x\}$, then each arc $\t$ in the marked surface $\SM$ lifts to two compatible arcs $1\overline{\t}$ and $x \overline{\t}$ in $(\mathbf{\Bar{S}, \Bar{M}})$. 
Using this, we have that $T = \{\t_1, \dots, \t_n\}$ lifts to a triangulation $\bar{T} = \{1\bar{\t_1}, \dots, x\bar{\t_n},1\bar{\t_1}, \dots, x\bar{\t_n}\}$ of $(\mathbf{\Bar{S}, \Bar{M}})$ that is invariant under the $\mathbb{Z}_2$-action.

Using this lift of a triangulation $T$ to $\bar{T}$, we can associate a double quiver to the triangulation $\bar{T}$. By Theorem \ref{theorem::classical}, we have that our partitioned quiver coincides with the quiver defined by \cite{FST08} when our surface is orientable. Since $\bar{T}$ is a triangulation of an orientable marked surface $(\mathbf{\Bar{S}, \Bar{M}})$, the partitioned quiver we obtain from $\bar{T}$ is the same as the quiver defined by \cite{FST08}.

\begin{example}
Consider the quasi-triangulation $T$ of the M\"obius strip with 3 marked points shown in Figure \ref{fig::doubleCover}. The lift of this triangulation is shown on its double cover: the annulus. Moreover, the corresponding partitioned quiver as well as the double quiver that we associate to $\bar{T}$ are both shown in Figure \ref{fig::DoubleQuiver}.
\end{example}

\begin{remark}
We want to emphasize that the construction of the double quiver that aligns with the work of \cite{FST08} only works for triangulations without quasi-arcs. To highlight why this construction fails when the triangulation contains a quasi-arc, notice that if we take a triangulation of the M\"obius strip with 3 marked points that contains a quasi-arc, the quasi-arc lifts to a non-contractible loop in $(\mathbf{\Bar{S}, \Bar{M}})$ that cannot be a part of any triangulation. 
\end{remark}


\begin{figure}[h]
 \centering
 \begin{tikzpicture} [baseline={([yshift=-.5ex]current bounding box.center)}, scale=0.4]
    \draw[very thick] (0,0) circle (3cm)
    node[above] at (0,-3){$i$}
    node[right] at (-3,0){$j$}
    node[left] at (3,0){$k$};
    
    \draw[thick] (0, 0) circle (0.25cm);
    \draw[rotate=45] (0,-0.25) -- (0,0.25);
    \draw[rotate=45] (-0.25,0) -- (0.25, 0);
    
    \tikzstyle{p}=[circle,fill, scale=0.3]
    \node[p] (1) at (0, 3) {};
    \node[p] (2) at (-2,-2.23) {};
    \node[p] (3) at (2,-2.23){};
    
    \draw [gray, latex-](0.5,3.5) -- (-0.5,3.5);
    
    \draw [red](1).. controls (1,1) and (0.8, 0).. node[midway,right] {$1$} (0.25,0);
    \draw [red] (1) .. controls (-1,1) and (-0.8,0) .. (-0.25,0);
    
    \draw [blue] (1) .. controls (0.3,1) .. (0.1,0.25);
    \draw [blue] (-0.1,-0.25) .. controls (-1,-1.8) .. node[midway,left] {$2$}(2);
    
    \draw[pink] (1) .. controls (-0.3,1) .. (-0.1,0.25);
    \draw[pink] (0.1,-0.25) .. controls (1,-1.8)  ..node[midway,right] {$3$} (3);
    \draw[latex-](3.5,0) -- node[midway,above] {$2:1$}(5.5,0);
\end{tikzpicture}
\begin{tikzpicture}[baseline={([yshift=-.5ex]current bounding box.center)}, scale=0.5]
    \tikzstyle{p}=[circle,fill, scale=0.3]
    \node[p] (1) at (1, 0) {};
    \node[p] (2) at (2, 0) {};
    \node[p] (3) at (4, 0) {};
    \node[p] (4) at (6, 0) {};
    \node[p] (5) at (7, 0) {};
    \node[p] (6) at (9, 0) {};
    \node[p] (7) at (1.5, -3) {};
    \node[p] (8) at (3.5, -3) {};
    \node[p] (9) at (4.5, -3) {};
    \node[p] (10) at (6.5, -3) {};
    \node[p] (11) at (8.5, -3) {};
    \node[p] (12) at (9.5, -3) {};
   \draw (0,0) -- (10,0);
   \draw [gray, -latex](4.7,0.3) -- (5.2,0.3);
   \draw (0,-3) -- (10, -3);
   \draw [gray, latex-](4.7,-3.3) -- (5.2,-3.3);
   \draw[blue] (1) -- (7);
   \draw[pink] (2) -- (7);
   \draw[red] (7) -- node[midway,right]{$1'$} (3);
   \draw[blue] (8) -- node[midway,right]{$3$} (3);
   \draw[pink] (9) -- node[midway,right]{$2$}(3);
   \draw[red] (10) -- node[midway,right]{$1$} (3);
   \draw[blue] (10) -- node[midway,right]{$3'$} (4);
   \draw[pink] (10) --node[midway,right]{$2'$} (5);
   \draw[red] (10) --node[midway,right]{$1'$} (6);
   \draw[blue] (11) -- (6);
   \draw[pink] (12) -- (6);
   \draw[red] (0,-2.5) -- (7);
   \draw[red] (10,-0.5) -- (6);
    \draw[gray, dotted] (0,0) -- (0,-3);
    \draw[gray, dotted] (3,0) -- (3,-3);
    \draw[gray, dotted] (5,0) -- (5,-3);
    \draw[gray, dotted] (8,0) -- (8,-3);
    \draw[gray, dotted] (10,0) -- (10,-3);
\end{tikzpicture}
\caption{Lift of the triangulation to its orientable double cover.}
\label{fig::doubleCover}
\end{figure}
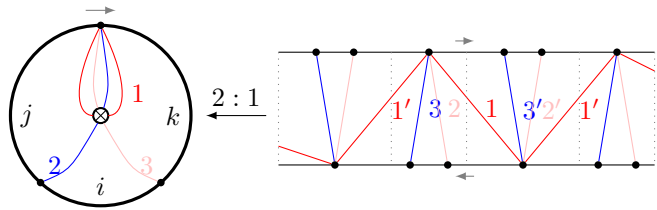

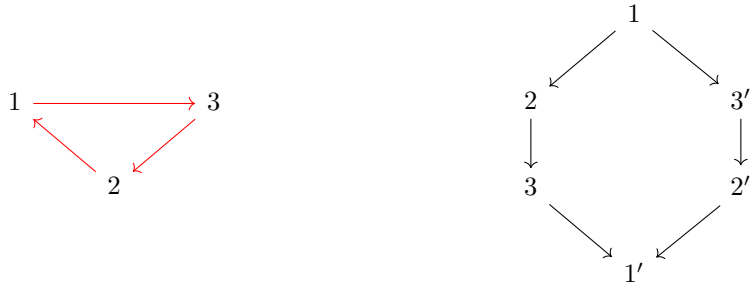
\begin{figure}
\begin{subfigure}{0.45\textwidth}
   \[ 
\begin{tikzcd}
 1 \arrow[rr, red]& & 3\arrow[dl,red] \\
&2 \arrow [ul,red]\\
\end{tikzcd}\]
\end{subfigure}
\hfill
\begin{subfigure}{0.45\textwidth}
   \[ 
\begin{tikzcd}
&1\arrow[dr]\arrow[dl] & \\
2\arrow[d]&& 3' \arrow [d]\\
3\arrow[dr]&& 2' \arrow [dl]\\
&1' & \\
\end{tikzcd}\]
\end{subfigure}
\caption{The corresponding partitioned quiver (left) and double quiver (right) associated to $\bar{T}$.}
\label{fig::DoubleQuiver}
\end{figure}

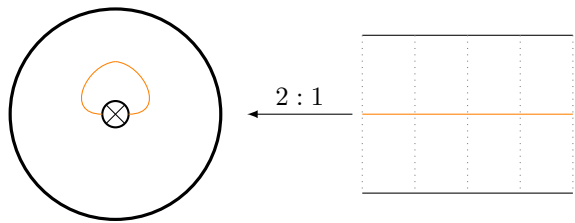
\begin{figure}
 \centering
 \begin{tikzpicture}[baseline={([yshift=-.5ex]current bounding box.center)}, scale=0.7]
    \draw[very thick] (0,0) circle (2cm);
  
    \draw[thick] (0, 0) circle (0.25cm);
    \draw[rotate=45] (0,-0.25) -- (0,0.25);
    \draw[rotate=45] (-0.25,0) -- (0.25, 0);

    \draw[orange] (0.25,0) ..  controls (1,0) and (0.55,0.95) ..  (0,1);
    \draw[orange] (0,1) .. controls (-0.55,0.95) and (-1,0)  .. (-0.25,0);
    
    \draw[latex-] (2.5,0) -- node[midway,above] {$2:1$} (4.5,0);
 \end{tikzpicture}
 \begin{tikzpicture}[baseline={([yshift=-.5ex]current bounding box.center)}, scale=0.7]
  \draw (0,0) -- (4,0);
   \draw (0,-3) -- (4, -3);
   \draw[orange] (0,-1.5) -- (4, -1.5);
   \draw[gray,dotted] (0,0) -- (0, -3);
   \draw[gray,dotted] (1,0) -- (1, -3);
   \draw[gray,dotted] (2,0) -- (2, -3);
   \draw[gray,dotted] (3,0) -- (3, -3);
   \draw[gray,dotted] (4,0) -- (4, -3);

\end{tikzpicture}
 \caption{Lift of a quasi-arc in the M\"obius strip to the annulus.}
 \label{fig::liftQuasiArc}
 \end{figure}

\section{Unistructiality of the M\"obius Strip} \label{section::unistructurality}

In this section, we divulge from our discussion of partitioned quiver to prove the unistructurality of the M\"obius strip.

\begin{definition}
Let $\A(X,T)$ be a cluster algebra, where $X$ is a cluster and $T$ is a triangulation of a given surface $\SM$ and denote the set of cluster variables of $\A(X,T)$ by $\X$.
Suppose there exists a surface $(\S^*,\M^*)$, a triangulation $T^*$ of this surface and a cluster $X^*$ such that the seed $(X^*,T^*)$ generates by mutations the exact same cluster variables as $(X,T)$, that is $\X$.
Then, $\A(X,T)$ is said to be unistructural if $\SM = (\S^*, \M^*)$.

\end{definition}

Since we are going to prove the unistructurality of the M\"obius strip, the only finite type of quasi-cluster algebras, we will introduce some theorems about finite types of cluster algebras.

\begin{theorem}\cite{FZ03}\label{thm::FominFinite}
A finite cluster algebra arising from a surface $\A(X,T)$ is mutational equivalent to a polygon, a polygon with a puncture, or a M\"obius Strip.
\end{theorem}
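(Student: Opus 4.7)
The plan is to use the bijection between cluster variables and (quasi-)arcs to reduce the statement to a topological classification: a cluster algebra $\A(X,T)$ arising from $\SM$ is finite exactly when $\SM$ admits only finitely many isotopy classes of arcs, so I would enumerate the topological invariants $(g, b, p, c)$ and, in the non-orientable setting, the crosscap number $k$, and show that only three families survive. The bijection between clusters and (quasi-)triangulations, together with the connectedness of the flip graph (Proposition \ref{prop::flipgraph}) and the invariance of the number of arcs (Corollary \ref{crl::numberofquasiarcs}), means that it suffices to determine for which $\SM$ the total set of isotopy classes of arcs/quasi-arcs is finite.

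First I would handle the finite cases directly. If $\S$ is an orientable disk with $c$ boundary marked points and no punctures, Proposition \ref{prop::numofarcsformula} gives $n = c - 3$ arcs, so this is always finite and yields the polygon case (type $A_{c-3}$). If $\S$ is an orientable disk with $c$ boundary marked points and one puncture, the arc count is again finite, yielding the once-punctured polygon (type $D_c$). For the Möbius strip ($k=1$, $b=1$, $p=0$, $c \geq 1$), I would cite the explicit enumeration of quasi-triangulations from \cite{BHL20} and the finite-type classification in \cite{DP15} to conclude that the quasi-cluster algebra has only finitely many quasi-cluster variables.

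Second, for every other topological type I would produce infinitely many pairwise non-isotopic arcs, so that the associated cluster algebra is infinite. The uniform tool is a Dehn-twist construction: pick an essential simple closed curve $\gamma \subset \S$ that is not isotopic into $\partial \S$, and an arc $\alpha$ with $\abs{\alpha \cap \gamma} \geq 1$; then the family $\{T_\gamma^n(\alpha)\}_{n \in \Z}$ consists of pairwise non-isotopic arcs, as detected by geometric intersection numbers with $\gamma$. Such a $\gamma$ exists whenever $\SM$ has at least two boundary components (annulus and beyond), positive orientable genus, at least two punctures, or at least two crosscaps; in each case the preceding construction rules out finiteness. In the non-orientable setting with $k \geq 2$ one can alternatively pass to the orientable double cover described at the end of Section \ref{section::ourquiver} and reduce to the orientable statement.

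The main obstacle is the Dehn-twist infiniteness claim: producing infinitely many non-isotopic arcs in every ``large'' surface uniformly requires a careful choice of $\gamma$ and $\alpha$ adapted to the topology, and the non-isotopy argument ultimately rests on standard mapping-class-group facts about geometric intersection numbers (and their behavior under crosscap slides in the non-orientable case). The finiteness of the Möbius strip case is comparatively easy, relying only on the closed-form count from \cite{BHL20}, so the real technical weight of the proof lies in ruling out all other topological types via this infinite-arc construction.
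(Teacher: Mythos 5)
The paper does not actually prove this statement: it is quoted as a classification result, with the orientable cases (polygon, once-punctured polygon, i.e.\ types $A$ and $D$) attributed to \cite{FZ03} and the M\"obius strip case resting on the finite-type classification of \cite{DP15} recalled in Theorem \ref{theorem::finitetype}. So there is no in-paper argument to compare against; what you have written is a plan for reproving the cited classification. Your overall strategy --- finite type is equivalent to finiteness of the set of isotopy classes of (quasi-)arcs, then classify the surfaces with finitely many arcs and kill everything else with a twisting construction --- is the standard and correct route, and it is essentially how the classification is established in the literature.

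Two steps, however, have genuine gaps as written. First, in the finite cases you deduce finiteness from Proposition \ref{prop::numofarcsformula}, but that proposition counts the arcs \emph{in a single triangulation}, a number that is finite for every marked surface whatsoever; it says nothing about whether the total set of isotopy classes of arcs (equivalently, of cluster variables) is finite. What you actually need is that in a disk an arc is determined up to isotopy by its pair of endpoints, giving at most $\binom{c}{2}$ arcs, and an analogous (slightly more delicate) count for the once-punctured disk and the M\"obius strip; the closed-form counts $\frac{n_1(n_1+3)}{2}$, $n_2^2$, and $\frac{3m^2-m+2}{2}$ used later in Section \ref{section::unistructurality} are the outputs of exactly these counts. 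Second, your hypothesis that the Dehn-twist curve $\gamma$ be ``not isotopic into $\partial \S$'' excludes the core curve of the annulus, which is isotopic to each boundary component --- yet the annulus is the first infinite case you must rule out, and the core curve is precisely the curve whose twist produces the infinite family of arcs joining the two boundary components. The condition you want is that $\gamma$ is essential in the sense of not bounding a disk or a once-marked disk; boundary-parallel curves are allowed and needed. With those two repairs (and the double-cover reduction you mention for $k \geq 2$ crosscaps, which is a clean way to handle the non-orientable infinite cases), the plan goes through.
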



\begin{theorem} \label{theorem::finitetype} \cite{DP15}
A quasi-cluster algebra is of finite type if and only if it's arising from a M\"obius strip with at least one marked point on the boundary or it is a cluster algebra of finite type.
\end{theorem}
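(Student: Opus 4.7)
The plan is to split the biconditional into two directions and leverage Theorem \ref{thm::FominFinite} for the orientable case while handling the non-orientable case via the topological classification of surfaces and the number-of-arcs formula from Proposition \ref{prop::numofarcsformula}.

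For the ($\Leftarrow$) direction, there is nothing to prove in the case of a classical cluster algebra of finite type. For the M\"obius strip case, I would combine Proposition \ref{prop::numofarcsformula} (which gives $n = c$ arcs in any quasi-triangulation of a M\"obius strip with $c$ boundary marked points, using $g=1$, $b=1$, $p=0$) with a direct enumeration argument. Specifically, one can show that every quasi-arc on the M\"obius strip is determined up to isotopy by the combinatorial data of how it interacts with a fixed reference triangulation, and since the number of arcs in every quasi-triangulation is fixed (Corollary \ref{crl::numberofquasiarcs}), an inductive argument on $c$ bounds the total number of quasi-arcs. Concretely, one can display a presentation of the flip graph (a graph which the first, third, and fourth author already studied in \cite{BHL20}) and verify finiteness by connectedness (Proposition \ref{prop::flipgraph}) together with a finite local structure around each vertex of the flip graph.

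For the ($\Rightarrow$) direction, I would proceed by contraposition: assume $(\S,\M)$ is non-orientable and \emph{not} a M\"obius strip, and produce an infinite family of pairwise non-isotopic quasi-arcs, forcing the quasi-cluster algebra to have infinitely many quasi-cluster variables. By the topological classification reviewed in Section \ref{subsubsection::topology}, the excluded cases are: (a) non-orientable genus $g \geq 2$, (b) $g=1$ with $b \geq 2$ boundary components, or (c) $g=1$ with at least one puncture. In each of these sub-cases I would pass to the orientable double cover $(\overline{\S}, \overline{\M})$ used in Section on Double Covers. A simple Euler characteristic computation shows that in every one of these excluded cases $(\overline{\S},\overline{\M})$ is an orientable surface of positive genus with at least one boundary component, or a multi-holed disk with sufficiently many marked points, i.e., a surface whose classical cluster algebra is \emph{not} of finite type by Theorem \ref{thm::FominFinite}. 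Lifting pairwise non-isotopic arcs or loops from the infinite-type double cover back down to $(\S,\M)$ yields infinitely many non-isotopic (quasi-)arcs on $(\S,\M)$, completing the contrapositive.

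The main obstacle is the lifting step in case (c), the once-punctured M\"obius strip (and the analogous case with extra boundary components): here the lift of a quasi-arc through the crosscap is not a simple closed arc in the double cover but a non-contractible loop, as noted in the remark preceding Figure \ref{fig::liftQuasiArc}. To circumvent this, I would separate quasi-arcs from regular arcs, handle regular arcs via the FST double cover correspondence, and treat quasi-arcs by a direct geometric argument: each crosscap supports infinitely many one-sided simple closed curves once the surface admits a non-trivial orientable subsurface (such as an annulus), and these are all pairwise non-isotopic. Putting these together, each of the excluded sub-cases produces an infinite cluster, and only the M\"obius strip with marked points on the boundary survives, as claimed.
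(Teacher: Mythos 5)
The paper does not prove this statement: Theorem \ref{theorem::finitetype} is quoted verbatim from \cite{DP15} and used as an imported result (its only role here is in the unistructurality argument of Section \ref{section::unistructurality}), so there is no in-paper proof to compare your proposal against. Judged on its own terms, your proposal has two genuine gaps.

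First, in the ($\Leftarrow$) direction your concrete finiteness mechanism fails. You propose to ``verify finiteness by connectedness (Proposition \ref{prop::flipgraph}) together with a finite local structure around each vertex of the flip graph.'' A connected, locally finite graph need not be finite (the integer line is a counterexample), and indeed the flip graph of \emph{every} marked surface is connected with each vertex of degree equal to the fixed number $n$ of arcs in a quasi-triangulation --- including the infinite-type surfaces. So this criterion cannot distinguish finite type from infinite type. Note also that Proposition \ref{prop::numofarcsformula} gives $n=c$ as the number of arcs \emph{in one quasi-triangulation} of the M\"obius strip, not the total number of isotopy classes of arcs on the surface; finiteness of the quasi-cluster algebra requires bounding the latter. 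The honest route is a direct classification of arcs on the M\"obius strip up to isotopy (this is what yields the count $\tfrac{3m^2-m+2}{2}$ quoted later in the paper), not a flip-graph argument.

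Second, in the ($\Rightarrow$) direction the transfer from the double cover to the base surface goes the wrong way. Knowing that $(\overline{\S},\overline{\M})$ is of infinite type gives infinitely many isotopy classes of arcs \emph{upstairs}, but an arc on the cover does not in general descend to a simple arc on the quotient, so ``lifting pairwise non-isotopic arcs \dots back down'' is not a well-defined operation and does not produce infinitely many arcs on $(\S,\M)$. (The covering map sends base arcs to cover arcs, not conversely.) To make the contrapositive work you should construct the infinite family directly on the base --- for instance by iterating a Dehn twist along an essential two-sided simple closed curve, which exists precisely in your excluded cases (a), (b), (c) --- and then observe that the images of a fixed arc under the powers of the twist are pairwise non-isotopic. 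With those two repairs the overall case decomposition (orientable case via Theorem \ref{thm::FominFinite}, non-orientable case split by genus, boundary components, and punctures) is a reasonable skeleton for the argument.
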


Using these finite type classifications, we show unistructurality for the only non-orientable finite type surface. 

\begin{theorem}
The quasi-cluster algebra arising from a M\"obius strip is unistructural. 
\end{theorem}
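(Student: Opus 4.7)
The plan is to exploit the finite-type classification of cluster algebras from surfaces to shrink the list of candidate surfaces, then distinguish the M\"obius strip $M_n$ from the remaining candidates by inspecting the shape of its exchange polynomials, and finally pin down the number of marked points via a cluster-size count.

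First I would invoke Theorem \ref{theorem::finitetype} to note that $\A(M_n)$ is of finite type. Suppose, per the definition of unistructurality, that there exists a marked surface $(\S^*, \M^*)$ with a (quasi-)triangulation $T^*$ and cluster $X^*$ such that mutations of $(X^*, T^*)$ produce the same set $\X$ of cluster variables. Then the associated quasi-cluster algebra is also of finite type, so combining Theorem \ref{thm::FominFinite} and Theorem \ref{theorem::finitetype}, the surface $(\S^*, \M^*)$ must be either a polygon, a once-punctured polygon, or a M\"obius strip with marked boundary points.

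Next I would rule out the two orientable candidates by exhibiting a non-binomial exchange relation in $\A(M_n)$. Concretely, I would take any quasi-triangulation of $M_n$ containing a quasi-arc $\t_t$ enclosed by a loop (which exists for every $n\ge 1$) and apply the Type \eqref{type4} quasi-exchange from Definition \ref{def::quasimutation}, producing a relation of the form $x_t x_t' = (x_j + x_k)^2 + x_i^2 x_j x_k = x_j^2 + 2 x_j x_k + x_k^2 + x_i^2 x_j x_k$. This is a four-term polynomial in the remaining cluster. In contrast, polygons and punctured polygons give rise to classical cluster algebras from orientable triangulations in the sense of \cite{FST08}, for which every exchange relation between mutation-adjacent cluster variables is the Ptolemy binomial $x_t x_t' = x_i x_k + x_j x_l$ as in \eqref{Eq::Mutation1}. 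Since the exchange polynomial associated to a pair of mutation-adjacent cluster variables is determined by the cluster structure of $\X$ in the ambient field $\F$, it cannot simultaneously be a two-term and a four-term polynomial; thus $(\S^*, \M^*)$ cannot be a polygon or a once-punctured polygon, and must be a M\"obius strip $M_m$ for some $m$.

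Finally I would match invariants to conclude $m=n$. By Corollary \ref{crl::numberofquasiarcs} and Proposition \ref{prop::numofarcsformula}, every quasi-triangulation of a M\"obius strip with $m$ boundary marked points consists of exactly $m$ arcs (since $g=1$, $b=1$, $p=0$, $c=m$ gives $3+3+m-6=m$). Hence the cardinality of any cluster in $\A(M_m)$ equals $m$; since cluster size is an invariant of the data $\X$ viewed as a cluster algebra, we must have $m=n$, and therefore $(\S^*, \M^*)=M_n=\SM$. The main obstacle I anticipate is making the second paragraph rigorous: specifically, justifying that the four-term shape of the Type \eqref{type4} exchange polynomial is genuinely an invariant of $\X$ and not an artifact of the $M_n$-presentation. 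This amounts to arguing that once a cluster $C\subset\X$ is fixed, the polynomial $P$ with $x_t x_t' = P$ (for the unique $x_t'\in\X$ mutation-adjacent to $x_t$ through $C$) is determined by the inclusion $C\cup\{x_t'\}\subset\F$ and the Laurent phenomenon, independently of whether $\X$ is regarded as arising from $M_n$ or from an orientable model.
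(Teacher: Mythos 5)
Your first step (reducing, via Theorems \ref{thm::FominFinite} and \ref{theorem::finitetype}, to the three candidates: polygon, once-punctured polygon, M\"obius strip) coincides with the paper's. But your second step diverges from the paper and contains a genuine gap, which you yourself flag as ``the main obstacle'': the claim that the four-term shape of the Type \eqref{type4} exchange polynomial is an invariant of the set $\X$. The hypothesis of unistructurality only hands you the \emph{set} of cluster variables $\X$, not its partition into clusters. Your argument needs the pair $(x_t, x_t')$ to be mutation-adjacent \emph{in the hypothetical orientable presentation} in order to compare its exchange polynomial there with the four-term relation coming from $M_n$; but a priori the clusters of the polygon model could be entirely different subsets of $\X$, in which case no single pair is exchanged in both presentations and no contradiction arises. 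Recovering the cluster structure from the bare set $\X$ is essentially the content of unistructurality in the first place, so as written the argument is close to circular. The same objection applies to your final step, where you invoke ``cluster size is an invariant of the data $\X$'': cluster size is an invariant of the cluster \emph{structure}, not obviously of the set $\X$.

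The paper avoids this entirely by comparing a quantity that manifestly depends only on the set $\X$, namely its cardinality: $|\X| = (3m^2-m+2)/2$ for the M\"obius strip with $m$ marked points versus $n_1(n_1+3)/2$ for type $\mathbb{A}$ and $n_2^2$ for type $\mathbb{D}$, and arguing these never agree (one should note the paper's own argument is delicate at $m=1$, where $(3\cdot 1 - 1 + 2)/2 = 2$ equals the type $\mathbb{A}_1$ count, so that case needs separate care). If you want to salvage your write-up, the cleanest fix is to replace both your second and third steps by cardinality comparisons: the count rules out the orientable candidates, and $(3m^2-m+2)/2 = (3n^2-n+2)/2$ forces $3(m+n)(m-n) = m-n$, hence $m=n$ for positive integers --- this last computation is actually a detail the paper's proof omits, and is worth keeping from your proposal in corrected form.
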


\begin{proof}
Let $T$ be a triangulation of a M\"obius strip with $n$ marked points on the boundary and consider the set $\X$ of cluster of cluster variables of the cluster algebra $\A(X,T)$ arising from that triangulation. Suppose there is another cluster algebra $\A(Y,X^*,T^*)$ that generates the exact same set $\X$ of cluster variables, in which $Y$ is a cluster.

By Theorem \ref{theorem::finitetype}, since $\A(X,T)$ arises from a M\"obius strip, $\X$ contains a finite number of cluster variables. Since the cluster algebra $\A(Y,X^*,T^*)$ is arising from a surface and generates the same finite set $\X$ of cluster variables, $Q$ is one of the following according to Theorem \ref{thm::FominFinite}: a polygon with $n_1$ vertices, a polygon $n_2$ vertices and a puncture, or a M\"obius Strip.

If $\A(Y,X^*,T^*)$ is a M\"obius strip, then the M\"obius strip is unistructural. Thus, we need to prove $\A(Y,X^*,T^*)$ cannot be a polygon or a polygon with a puncture. We proceed by contradiction. If the M\"obius strip is unistructural, the number of cluster variables should be a positive integer. We know the number of cluster variables for a Mobius strip with $m$ marked points is $$\frac{3m^2-m+2}{2},$$ according to Dupont and Palesi \cite{DP15}. The number of cluster variables for a polygon with $n_1$ vertices is the number of almost positive roots of the Type $\mathbb{A}_n$ root system which is given by $\frac{n_1(n_1+3)}{2}$.

Suppose $\A(Y,X^*,T^*)$ is a cluster algebra of a polygon with $n_1$ vertices. To test if the number of cluster variables is an integer between a polygon and the M\"obius Strip, we have 

\begin{align*}
   \frac{3{m}^2-m+2}{2}=\frac{n_1(n_1+3)}{2}.
\end{align*}
The solution of this equation is $n=\frac{1}{2} (-3 \pm \sqrt{17 - 4 m + 12 m^2})$ which proves $\A(X,T)$ and $\A(Y,X^*,T^*)$ cannot have integer number of cluster variables, which is a contradiction.

Similarly, for a polygon with $n_2$ vertices and a puncture, the number of cluster variables is the number of almost positive roots of the Type $\mathbb{D}_n$ root system which is given by $n_2^2$. Then we have, \begin{align*}
   \frac{3{m}^2-m+2}{2}={n_2}^2.
\end{align*}
The solution is $n=\frac{1}{6} (1 \pm \sqrt{12m^2 - 23})$ which cannot be an integer. Therefore, if $\A(Y,X^*,T^*)$ is a cluster algebra of polygon with a puncture, $\A(X,T)$ and $\A(Y,X^*,T^*)$ cannot have integer number of cluster variables, which is also a contradiction.
\end{proof}

\section{Future Directions}

With this definition of a partitioned quiver associated to a non-orientable surface, there are many open questions that we are investigating. The first question we are investigating is defining an appropriate exchange matrix to a quasi-triangulation. In \cite{DP15}, the authors associate an exchange matrix to the double cover $(\mathbf{\Bar{S}, \Bar{M}})$ when the triangulation of $\SM$ contains no quasi-arcs. We hope that using our notion of partitioned quiver, we can define an exchange matrix as a pair $(B_T, P_T)$ where the second entry is a partition of the paths in $T$. With this exchange matrix or using the notion of $T$-paths in \cite{Sch10}, we hope to define $g$-vectors and $F$-polynomials associated to non-orientable surfaces.  

Another project that the first and last author are actively working on with Aaron Chan is the categorification of quasi-triangulations of unpunctured non-orientable surfaces. The additive categorification of cluster algebras arising from orientable surfaces has been studied by many, see for example \cite{Ami11}, \cite{Rei10}, \cite{Kel12}, \cite{ADS14}. We are currently investigating categorification of quasi-triangulations of unpunctured non-orientable surfaces and their quasi-mutations. In particular, we show quasi-triangulations of non-orientable surfaces are in bijection with quivers with quivers with potential equipped with a fixed point free anti-involution. Moreover, we have that arcs and certain closed curves of non-orientable surfaces giver rise to indecomposable representations of the associated Jacobian algebra of the quiver with potential.

Another question all authors hope to work towards is defining a frieze pattern associated to quasi-triangulations of non-orientable surfaces. Now with our definition of the partitioned quiver, we wonder if we can use this data to properly generate frieze patterns.
 \newpage
\bibliographystyle{alpha} 
\bibliography{biblio}

\end{document}